\newtheorem{theorem}{Theorem}[section]
\newtheorem{lemma}[theorem]{Lemma}
\newtheorem{proposition}[theorem]{Proposition}
\newtheorem{conjecture}[theorem]{Conjecture}
\newtheorem{definition}[theorem]{Definition}
\newtheorem{example}[theorem]{Example}
\newtheorem{remark}[theorem]{Remark}
\numberwithin{equation}{section}
\newcommand{\la}{\lambda}
\newcommand{\x}{\mathbf{x}}
\renewcommand{\a}{\alpha}
\renewcommand{\b}{\beta}
\renewcommand{\c}{\gamma}
\renewcommand{\d}{\delta}
\newcommand{\e}{\epsilon}
\newlength\cellsize \setlength\cellsize{18\unitlength}
\newcommand\cellify[1]{\def\thearg{#1}\def\nothing{}%
\ifx\thearg\nothing\vrule width0pt height\cellsize depth0pt%
  \else\hbox to 0pt{\usebox2\hss}\fi%
  \vbox to \cellsize{\vss\hbox to \cellsize{\hss$_{#1}$\hss}\vss}}
\newcommand\tableau[1]{\vtop{\let\\=\cr
\setlength\baselineskip{-1000pt}
\setlength\lineskiplimit{1000pt}
\setlength\lineskip{0pt}
\ialign{&\cellify{##}\cr#1\crcr}}}
\newlength{\celldim} \setlength{\celldim}{22\unitlength}
\newsavebox{\cell}
\sbox{\cell}{%
\begin{picture}(22,22)\linethickness{0.6pt} %
  \put(0,0){\line(1,0){22}} \put(0,0){\line(0,1){22}}
  \put(22,0){\line(0,1){22}} \put(0,22){\line(1,0){22}}
\end{picture}}
\newcommand\cellifying[1]{%
  \def\thearg{#1}\def\nothing{}%
  \ifx\thearg\nothing \vrule width0pt height\celldim depth0pt\else
  \hbox to 0pt{\usebox{\cell} \hss}\fi%
  \vbox to \celldim{ \vss \hbox to
  \celldim{\hss$#1$\hss} \vss}
}
\newcommand\ttableau[1]{\vtop{\let\\\cr
\baselineskip -16000pt \lineskiplimit 16000pt \lineskip 0pt
\ialign{&\cellifying{##}\cr#1\crcr}}}
\newcommand\bas[1]{\omit \vbox to \cellsize{ \vss \hbox to \celldim{\hss$#1$\hss} \vss}}
\begin{document}

\title[Positivity of chromatic symmetric functions]{Positivity of chromatic symmetric functions associated with Hessenberg functions of bounce number $3$}
\thanks{The work on this paper was done while the first named author was visiting Korea Institute for Advanced Study(KIAS). She is grateful to KIAS for the hospitality.}

\author{Soojin Cho}
\address{Department of Mathematics, Ajou University, Suwon  16499, Republic of Korea}
\email{chosj@ajou.ac.kr}

\author{Jaehyun Hong}
\address{Center for Complex Geometry, Institute for Basic Science (IBS), Daejeon 34126, Republic of Korea}
\email{jhhong00@ibs.re.kr}

\begin{abstract}
We give a proof of the Stanley-Stembridge conjecture on chromatic symmetric functions for the  class of all unit interval graphs with independence number $3$. That is, we show that the chromatic symmetric function of the incomparability graph of a unit interval order in which the length of a chain is at most $3$
is positively expanded as a linear sum of elementary symmetric functions.
\end{abstract}

\keywords{
chromatic symmetric functions, Stanley-Stembridge conjecture, $e$-positivity, $h$-positivity, Hessenberg functions}

\subjclass[2020]{Primary 05E05; Secondary 05C15, 05C25}

\maketitle
\section{Introduction} \label{sec:intro}

A \emph{chromatic symmetric function} $X_G(\mathbf{x})$ of a finite simple graph $G$ with vertex set $V$ is  defined in a natural way to generalize the chromatic polynomials;
$$X_G(\mathbf{x})=\sum_\kappa \left( \prod_{v\in V} x_{\kappa(v)}\right),$$
where the sum is over all proper colorings $\kappa: V \rightarrow \mathbb P$ of $G$ with the set of positive integers $\mathbb P$.
Since chromatic symmetric functions  were introduced by Stanley in 1995 \cite{S1}, they have become an important area of research in the relations to many different fields including combinatorics, representation theory and algebraic geometry.

The Stanley-Stembridge conjecture \cite{SS, S1} is a well known open conjecture on chromatic symmetric functions which states that chromatic symmetric functions of the incomparability graph of a (3+1)-free poset can be positively expanded as a sum of elementary symmetric functions, i.e. is \emph{$e$-positive}.

Gasharov \cite{Gasha} proved that the chromatic symmetric functions in the Stanley-Stembridge conjecture can be positively expanded as a sum of Schur functions by constructing combinatorial objects called $P$-tableaux, that is a weaker result than the conjecture since elementary symmetric functions are positively expanded as a sum of Schur functions.
 since elementary symmetric functions are positively expanded as a sum of Schur functions. The conjecture itself was proved for some special classes of graphs including the complement of a bipartite graph that was considered in \cite{S1} and recently in \cite{CH, HP}; see Remark 2.18 in \cite{CH} for a list of graphs with which the conjecture has been proved to be true. An important result concerning the Stanley-Stembridge conjecture due to Guay-Paquet \cite{G-P1} is that it is enough to prove the conjecture for all posets that are both $(3+1)$-free \emph{and} $(2+2)$-free. This reduces the class of objects we have to consider for the proof of the conjecture down to the posets of \emph{unit interval orders}.

The maximum length of possible chains in a unit interval order plays an important role in understanding the $e$-expansion of the corresponding chromatic symmetric function, and we prove the Stanley-Stembridge conjecture for the unit interval orders in which  the maximum length of chains is at most $3$.
We note that the chains in the corresponding unit interval orders of the complements of bipartite graphs, in which the conjecture was proved to be true as stated above, have  length at most $2$. We also note that the Stanley-Stembridge conjecture was proved only for a few special cases when the longest chain in the unit interval poset has length $3$. (See Section 3.2 of \cite{CH}.)  We follow and generalize the basic idea in \cite{CH} to use Gasharov's $P$-tableaux for the Schur expansion of the chromatic symmetric functions and Jacobi-Trudi identity for the proof of the main theorem of the current paper. We write the coefficients in the $e$-expansion of the chromatic symmetric functions as a sum of signed sets of $P$-tableaux of possible shapes that correspond to permutations in the symmetric group $\mathfrak{S}_3$. Then we construct injective maps from negative sets to positive sets to complete the proof. Our work to write the coefficients as a sum of signed sets can be extended to the general case, while the construction of injective maps needs more fine work with insight.

Shareshian and Wachs \cite{SW} defined a quasisymmetric refinement of chromatic symmetric functions and introduced the \emph{natural unit interval orders} as (naturally labeled) representatives of the classes of equivalent unit interval orders. Then, in terms of natural unit interval orders they derived a refined Gasharov's result, generalized the Stanley-Stembridge conjecture to its quasi form,
and made a conjecture that their chromatic quasisymmetric functions, after we apply the usual involution $\omega$ on symmetric functions,
are the Frobenius characteristics of the symmetric group representations derived from the Tymoczko's dot action \cite{T1, T2} on the cohomology of Hessenberg varieties of type $A$.

It is remarkable that the Shareshian-Wachs conjecture was proved to be true independently by Brosnan and Chow \cite{BC}, and Guay-Paquet \cite{G-P2}. 
This enables one to understand the $e$-positivity conjecture by Stanley-Stembridge as the $h$-positivity, where $h$ stands for the \emph{homogeneous} symmetric functions, of the symmetric group representation
on the cohomology of Hessenberg varieties. We also have to note that the notion of \emph{natural} unit interval orders is closely related with Hessenberg varieties through the Hessenberg functions or equivalently the Dyck paths.

With all of these profound theories developed so far on the chromatic symmetric functions, especially on the conjecture by Stanley-Stembridge, we could describe the conjecture in terms of \emph{Hessenberg functions}. 
In the rest of this section, starting with a definition of Hessenberg functions we proceed to state the Stanley-Stembridge conjecture in Conjecture~\ref{conj:e-positive} and \ref{conj:h-positive} and finally give a statement of our main theorem, Theorem~\ref{thm:main}. We adopt the $h$-positivity statement of the conjecture for our argument since that makes it easier to handle Gasharov's $P$-tableaux for the proof of the main theorem.

\begin{definition} For a positive integer $n$, a non-decreasing function $f \,:\, [n] \rightarrow [n]$ is called a \emph{Hessenberg function} if $i \leq f(i)$ for all $i\in [n]$ where $[n]$ is the set $\{1, 2, \dots, n\}$.
\end{definition}

\begin{definition}\label{def:hess_related} Let $f : [n] \rightarrow [n]$ be a Hessenberg function for a positive integer $n$.
\begin{itemize}
\item The \emph{natural unit interval order} $P(f)$ associated with $f$ is the poset on $[n]$ with the order relation $\prec_f$ defined by
 $$i\prec_f j \mbox{ if and only if } f(i)<j\,.$$

\item The \emph{natural unit interval graph} $G(f)$ associated with $f$ is the graph on the vertex set $[n]$ where
$$\{i, j\}, i<j, \mbox{ is an edge of } G(f) \mbox{ if and only if } i\nprec_f j \mbox{ or equivalently } f(i)\geq j\,.$$

\item The \emph{Hessenberg variety} $\mathcal{H}(f, s)$ associated with $f$ and a linear transformation $s : \mathbb C^n \rightarrow \mathbb C^n$ is the set of complete flags defined as follows;
$$\mathcal{H}(f, s)=\{F_0\subset F_1 \subset \cdots \subset F_n=\mathbb C^n\,|\, \mathrm{dim}F_i=i, \, sF_i\subseteq F_{f(i)}\,\mbox{ for all } i\in [n] \}\,.$$
\end{itemize}
\end{definition}

It is well known that unit interval orders are characterized as $(3+1)$-free and $(2+2)$-free posets and the number of isomorphism classes of unit interval orders is the Catalan number. The poset $P(f)$ in Definition~\ref{def:hess_related} are \emph{naturally} labeled unit interval orders, which are representatives of the isomorphism classes of unit interval orders. (See Section 4 of \cite{SW} and the references therein for a detailed explanation on unit interval orders.)  We also note that the natural interval graph $G(f)$ is the \emph{incomparability graph} of the natural unit interval order $P(f)$, and therefore the independence number (the maximum size of an induced subgraph that has no edge), of $G(f)$ coincides with the length of the longest chain of $P(f)$.

 There are many equivalent descriptions to define natural unit interval orders and the following proposition is from one of them.

\begin{proposition}[Proposition 4.1 in \cite{SW}] \label{prop:unit_interval_order} Let  $f : [n] \rightarrow [n]$ be a Hessenberg function.
\begin{enumerate}
\item If $i \prec_f j$ then $i<j$ in the natural order on the integers.
\item If the direct sum(disjoint union) $\{i \prec_f k\}+\{j\}$ is an induced subposet of $P(f)$ then $i<j<k$ in the natural order on the integers.
\end{enumerate}
\end{proposition}

We fix a set of infinitely many variables $\x=(x_1, x_2, \dots)$ and consider the algebra of symmetric functions $\Lambda(\mathbf x)$ over a field. For a given positive integer $k$, the $k$th \emph{elementary symmetric function} $e_k$ and the $k$th \emph{homogeneous symmetric function} $h_k$ are defined as $$e_k=\sum_{i_1<\cdots<i_k} x_{i_1}\cdots x_{i_k}\quad \mbox{ and }\quad h_k=\sum_{i_1\leq\cdots\leq i_k} x_{i_1}\cdots x_{i_k}\,.$$
A non-increasing sequence of positive integers $\la=(\la_1,\dots, \la_\ell)$ is  a \emph{partition} of $n=\sum_i \la_i$ whose \emph{length} $\ell(\la)$ is $\ell$, and we use $\la \vdash n$ to denote that $\la$ is a partition of $n$.  We use $\la'=(\la'_1, \dots, \la'_{\ell'})$ for the \emph{conjugate} of  $\la=(\la_1,\dots, \la_\ell)\vdash n$, that is $\la'_i=| \{ j\,|\, \la_j \geq i \} |$ for each $i$.
For a partition $\la=(\la_1,\dots, \la_\ell)$,  we let
\begin{equation} e_\la=e_{\la_1}\cdots e_{\la_\ell}\,, \mbox{ the elementary symmetric function,}\end{equation}
\begin{equation} h_\la=h_{\la_1}\cdots h_{\la_\ell}\,,\mbox{ the homogeneous symmetric function,}\end{equation}
\begin{align} \label{eq:Jacobi-Trudi}
s_\la &=\mathrm{det}(e_{\la'_i-i+j})_{\ell'\times\ell'}=\mathrm{det}(h_{\la_i-i+j})_{\ell\times\ell}\,,  \mbox{ the Schur function, }\\
&\mbox{ where  $e$ and $h$ with negative subscripts are $0$ and $e_0=h_0=1$.}\nonumber
\end{align}
Then
$\{ e_\mu \,|\, \mu\vdash n\}$,  $\{ h_\mu \,|\, \mu\vdash n\}$ and $\{ s_\mu \,|\, \mu\vdash n\}$ are
well known bases of the space $\Lambda^n(\mathbf x)$  of symmetric functions of degree $n$. The above definitions of Schur functions are known as \emph{Jacobi-Trudi identities}.
The algebra involution $\omega$ is defined by $\omega(e_i)=h_i$  or equivalently by $\omega(s_\la)=s_{\la'}$ on $\Lambda(\mathbf x)$, which explains the equivalence of two kinds of Jacobi-Trudi identities for $s_\la$. The \emph{Frobenius characteristic} $\mathrm{ch}$ is the map from the space of representations of $\mathfrak{S}_n$ to the space $\Lambda^n (\mathbf x)$ of symmetric functions sending the natural permutation representation corresponding to a partition $\lambda$ to the  homogeneous symmetric function $h_\lambda$.
For more details on symmetric functions and the representation of the symmetric group, the readers are referred to \cite{Mac}.

We let $\mathbb P$ be the set of positive integers. For a graph $G=(V,E)$ with vertex set $V$ and edge set $E$, a \emph{proper coloring of $G$} is a map $\kappa: V \rightarrow \mathbb P$ such that $\kappa(i)\ne \kappa(j)$ whenever $\{i, j\}\in E$. The chromatic symmetric function of a graph $G$ was defined by Stanley in \cite{S1} and its refinement by Shareshian-Wachs in \cite{SW}.

\begin{definition}[\cite{S1, SW}] \label{def:chromatic_quasi} When $G=([n], E)$ is a graph on the vertex set $[n]$, the \emph{chromatic quasisymmetric function of $G$} is
$$X_G(\mathbf{x}, t)=\sum_\kappa t^{asc(\kappa)} \,\mathbf x_\kappa\,, $$
where the sum is over all proper colorings of $G$, $\mathrm{asc}(\kappa)=|\{\{i, j\}\in E\,|\, i<j \mbox{ and } \kappa(i)<\kappa(j)\}|$ and $\mathbf x_\kappa=\prod_{i=1}^n x_{\kappa(i)}$.
\end{definition}

We remark that $X_G(\mathbf{x}, 1)$ is Stanley's chromatic symmetric function $X_{G}(\mathbf{x})$. Moreover,  $X_G(\mathbf{x}, t)$ is not a symmetric function in general while Shareshian and Wachs showed that  the chromatic quasisymmetric function  $X_{G(f)}(\mathbf{x}, t)$ of a natural unit interval graph $G(f)$  is a symmetric function in $\x=(x_1, x_2, \dots)$. Shareshian and Wachs conjectured the following in \cite{SW}, which has been proved by Brosnan-Chow \cite{BC} and Guay-Paquet \cite{G-P2} independently.

\begin{proposition} Let $f : [n] \rightarrow [n]$ be a Hessenberg function and $s: \mathbb C^n \rightarrow \mathbb C^n$ be a linear transformation with $n$ distinct eigenvalues, then
\[ \sum_j \mathrm{ch} H^{2j}(\mathcal H(f, s))\,t^j=\omega X_{G(f)}(\mathbf x, t)\,.\]
\end{proposition}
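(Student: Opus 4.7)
The plan is to identify both sides as graded Frobenius characters of the same $\mathfrak{S}_n$-representation, using Tymoczko's GKM realization of the dot action. The overall strategy I would follow is that of Brosnan--Chow: degenerate the regular semisimple parameter $s$ to a regular nilpotent $N$ via a one-parameter family, extract the dot action from monodromy on nearby cycles, and match with the chromatic side combinatorially.

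Concretely, for regular semisimple $s$, the variety $\mathcal{H}(f,s)$ is smooth and equivariantly formal under the maximal torus $T$ centralizing $s$. Its $T$-fixed points are indexed by permutations $w\in\mathfrak{S}_n$, a Bialynicki--Birula paving yields $\dim H^{2j}(\mathcal{H}(f,s)) = \#\{w : \mathrm{inv}_f(w) = j\}$ where $\mathrm{inv}_f(w) = \#\{i<k\le f(i) : w^{-1}(i)>w^{-1}(k)\}$, and odd cohomology vanishes. Tymoczko's dot action is then presented through the GKM graph of $f$, and I would compute its Frobenius characteristic cell by cell, expressing $\sum_j\mathrm{ch}\,H^{2j}(\mathcal{H}(f,s))\,t^j$ as a sum over $w\in\mathfrak{S}_n$ of $t^{\mathrm{inv}_f(w)}$ times a Gessel fundamental quasisymmetric function attached to the descent structure of $(w,f)$.

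On the chromatic side, one refines each proper coloring $\kappa$ of $G(f)$ by sorting its values along maximal weakly decreasing runs to obtain a permutation together with a weakly increasing tuple, producing a parallel expansion of $X_{G(f)}(\mathbf{x},t)$ in the fundamental quasisymmetric basis indexed by $\mathfrak{S}_n$ and graded by the Hessenberg ascent statistic. Applying $\omega$ swaps ascent/descent roles, bringing the expression into a form directly comparable with the dot-action character. The main obstacle is aligning the two $\mathfrak{S}_n$-indexings term by term; this is precisely where genuine geometric input is needed. The Brosnan--Chow route handles it by invoking the local invariant cycle theorem to transport the character computation from $\mathcal{H}(f,s)$ to the regular nilpotent Hessenberg variety $\mathcal{H}(f,N)$, where an explicit combinatorial model for the dot action is available. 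Alternatively, Guay-Paquet's approach proves that both $\sum_j\mathrm{ch}\,H^{2j}(\mathcal{H}(f,s))\,t^j$ and $\omega X_{G(f)}(\mathbf{x},t)$ satisfy a common modular law under a local Dyck-path move and agree on the base cases (edgeless and complete graphs), forcing equality by induction on the Hessenberg function $f$.
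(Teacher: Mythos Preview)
The paper does not prove this proposition at all. It is stated as a known result, attributed to Brosnan--Chow \cite{BC} and Guay-Paquet \cite{G-P2}, and is used only as background motivation in the introduction; the paper's own contributions begin later and concern the $h$-positivity question when $b(f)=3$. So there is no ``paper's own proof'' to compare against.

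Your proposal is a reasonable high-level sketch of the two existing proofs in the literature, and you correctly identify the key geometric ingredients (GKM presentation, the Bialynicki--Birula paving, the degeneration to regular nilpotent and the local invariant cycle theorem for Brosnan--Chow; the modular law for Guay-Paquet). As a self-contained argument, however, it remains only an outline: the passage ``The main obstacle is aligning the two $\mathfrak{S}_n$-indexings term by term; this is precisely where genuine geometric input is needed'' is the heart of the matter, and you do not actually carry it out. If you were asked to supply a proof here, the honest answer is simply to cite \cite{BC} and \cite{G-P2}, which is exactly what the paper does.
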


A long standing conjecture on chromatic (quasi)symmetric function is about \emph{positivity}, whose proof is known only for some special cases; see \cite{CH, DW,  GS, HP}.

\begin{conjecture}[\cite{S1, SW}]\label{conj:e-positive} For a given Hessenberg function $f : [n] \rightarrow [n]$, $X_{G(f)}(\mathbf{x}, t)$ is $e$-positive. That is, if we write $X_{G(f)}(\mathbf{x}, t)=\sum_\la b_\la(t) e_\la(\x)$, then $b_\la (t)$ is a polynomial of nonnegative integer coefficients.
\end{conjecture}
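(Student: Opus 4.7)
My plan is to prove Stanley--Stembridge in the hypothesis of the paper --- all chains in $P(f)$ have length at most $3$, so the independence number of $G(f)$, which equals the longest chain length in $P(f)$, is at most $3$. Following the paper's suggestion I work throughout with the equivalent $h$-positivity statement: $X_{G(f)}(\mathbf x,t)$ is $e$-positive if and only if $\omega X_{G(f)}(\mathbf x,t)$ is $h$-positive.

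The starting point is Gasharov's (Shareshian--Wachs-refined) Schur-positive expansion
\[
X_{G(f)}(\mathbf x,t) \;=\; \sum_{\lambda} a_\lambda(t)\, s_\lambda,
\qquad a_\lambda(t)\in\mathbb Z_{\geq 0}[t],
\]
with $a_\lambda(t)$ enumerating $P(f)$-tableaux of shape $\lambda$ under a suitable $t$-weighting; the chain-length hypothesis forces $\ell(\lambda)\leq 3$ for every shape that occurs. Because $\ell(\lambda)\leq 3$, the Jacobi--Trudi identity~\eqref{eq:Jacobi-Trudi} applied in its short direction expresses each summand as a signed sum of at most $|\mathfrak S_3|=6$ terms,
\[
s_\lambda \;=\; \det\bigl(h_{\lambda_i-i+j}\bigr)_{\ell(\lambda)\times\ell(\lambda)} \;=\; \sum_{\sigma\in\mathfrak S_{\ell(\lambda)}}\mathrm{sgn}(\sigma)\,\prod_i h_{\lambda_i - i+\sigma(i)},
\]
so, applying $\omega$ and collecting terms, the coefficient of $h_\mu$ in $\omega X_{G(f)}(\mathbf x,t)$ --- equivalently of $e_\mu$ in $X_{G(f)}(\mathbf x,t)$ --- is expressible as a signed count $|\mathcal T^+_\mu(t)| - |\mathcal T^-_\mu(t)|$. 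Here $\mathcal T^\pm_\mu$ are explicit finite sets of pairs $(T,\sigma)$ with $T$ a Gasharov $P(f)$-tableau of some shape $\lambda$, $\sigma\in\mathfrak S_3$, and $\mu$ equal to the sorted content vector $(\lambda_i-i+\sigma(i))_i$.

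The combinatorial core is then to show, for every $\mu$, that this signed count is nonnegative by constructing an explicit injection $\Phi_\mu\colon \mathcal T^-_\mu\hookrightarrow \mathcal T^+_\mu$. I would build $\Phi_\mu$ case by case on the five non-identity elements of $\mathfrak S_3$: for a transposition $\sigma=(i\;j)$, the corresponding Jacobi--Trudi summand amounts to a formal swap of two rows' contents in $T$, so I detect the minimal entry of $T$ that obstructs $T$ from remaining a Gasharov tableau after the swap (the first violation of the column-chain or row-antichain condition) and repair it by a canonical local exchange; for the two $3$-cycles the repair iterates through a length-$3$ chain in $P(f)$, which is exactly what the bounce-number-$3$ hypothesis supplies. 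Proposition~\ref{prop:unit_intervsl_order}(2) --- the $(3+1)$-free axiom --- is the structural fact that licenses the exchange and keeps the new entries consistent with the poset order on $P(f)$.

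The main obstacle I foresee is the \emph{global} injectivity of $\Phi_\mu$: each component $\Phi_\mu|_\sigma$ will be injective by construction, but ensuring that the images of different $\sigma$-components are disjoint in $\mathcal T^+_\mu$ demands a canonical invariant of $\Phi_\mu(T,\sigma)$ from which $\sigma$ can be recovered. Separating the two $3$-cycle images from the images of iterated transpositions will be the most delicate case, and is where Proposition~\ref{prop:unit_intervsl_order}(2) must be used most carefully. With $\ell(\lambda)\leq 3$ the collision patterns are finite and tractable; for bounce number $\geq 4$ the group $\mathfrak S_{\ell(\lambda)}$ would grow to $\geq 24$ elements and the cross-component combinatorics explodes, which is precisely where this strategy stops working in its current form.
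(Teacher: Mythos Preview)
Your high-level strategy --- Gasharov's Schur expansion, Jacobi--Trudi with $\ell(\lambda)\le 3$, then sign-reversing injections on the resulting signed tableau sets --- is exactly the paper's approach. You also correctly identify the crux: the individual injections are not hard, but arranging their images to be disjoint is.

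Two substantive caveats, though. First, you carry the $t$-grading throughout, but the paper only establishes the $t=1$ case: its injections are \emph{not} ascent-preserving (this is stated explicitly in the concluding remarks), so the refined conjecture for bounce number~$3$ remains open. If you intend your $\Phi_\mu$ to be weight-preserving you are promising strictly more than the paper delivers, and nothing in your sketch suggests how to achieve that.

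Second, your description of the injection as ``swap two rows' contents, then repair the first violation by a local exchange'' does not match what actually has to happen and underestimates the work. The Jacobi--Trudi term for $\sigma$ changes the row \emph{lengths}, not their contents, so the injection must send a tableau of shape $\lambda$ to a tableau of a different shape; the paper's basic moves $\sigma_{j\to i}^\square$, $\sigma_{j\to i}^{\square\square}$ relocate boxes between rows. More importantly, the disjointness you flag as ``the main obstacle'' is not handled by a single canonical invariant recoverable from the image. The paper instead organizes the problem by four cases on $\mu$ (whether $\mu_1=\mu_2+1$ and/or $\mu_2=\mu_3+1$), first cancels what it can with the easy map $\sigma_{3\to 1}$, and then in the hardest case constructs two bespoke injections $\widetilde\sigma_{2\to1}^\square$ and $\widetilde\sigma_{3\to2}^\square$ whose images still collide; one of them must be further modified into a map $\phi_2$ through a multi-branch case analysis (five sub-subcases governed by chains of comparisons $d_1^{(j)}\succ b_2^{(j-1)}$) to force disjointness. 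The $(3+1)$-free condition is used repeatedly, as you anticipate, but there is no single clean repair rule --- the construction is genuinely ad hoc, which is also why it does not respect the $t$-weight.
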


Conjecture~\ref{conj:e-positive} on $e$-positivity of $X_{G(f)}(\x, t)$ is equivalent to $h$-positivity of $\omega X_{G(f)}(\mathbf{x}, t)$.

\begin{conjecture}[\cite{S1, SW}]\label{conj:h-positive} For a given Hessenberg function $f : [n] \rightarrow [n]$, $\omega X_{G(f)}(\mathbf{x}, t)$ is $h$-positive. That is, if we write $\omega X_{G(f)}(\mathbf{x}, t)=\sum_\la c_\la(t) h_\la(\x)$, then $c_\la (t)$ is a polynomial of nonnegative integer coefficients.
\end{conjecture}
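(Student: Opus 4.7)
The plan is to prove Conjecture~\ref{conj:h-positive} in the restricted case when the bounce number of $f$ is at most $3$, i.e., when the longest chain in $P(f)$ has length at most $3$; this is the main theorem of the paper. The strategy extends that of \cite{CH}, which settled the bounce-number-$2$ case.

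The starting point is Gasharov's Schur-positive expansion, in the refined form of Shareshian--Wachs,
\[
X_{G(f)}(\mathbf{x}, t) = \sum_\lambda a_\lambda(t)\, s_\lambda(\mathbf{x}),\qquad a_\lambda(t) = \sum_T t^{\mathrm{inv}_f(T)},
\]
where $T$ runs over the $P(f)$-tableaux of shape $\lambda$. Since the columns of a $P(f)$-tableau are chains in $P(f)$ and the longest chain in $P(f)$ has length at most $3$, only partitions with $\ell(\lambda) \leq 3$ contribute. I would apply the involution $\omega$ together with the Jacobi--Trudi identity (\ref{eq:Jacobi-Trudi}) to rewrite $\omega X_{G(f)}(\mathbf{x}, t)$ as a signed sum of products of $h$'s; the three-row restriction on $\lambda$ organizes the resulting signed contributions by permutations in $\mathfrak{S}_3$. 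Collecting by partition $\mu$ then presents the coefficient of $h_\mu$ as
\[
c_\mu(t) = \sum_{(\lambda, \sigma, T)} \mathrm{sgn}(\sigma)\, t^{\mathrm{inv}_f(T)},
\]
a signed enumeration over triples consisting of a shape $\lambda$ with $\ell(\lambda) \leq 3$, a permutation $\sigma \in \mathfrak{S}_3$, and a $P(f)$-tableau $T$ of shape $\lambda$ for which the $\sigma$-indexed term of the Jacobi--Trudi expansion contributes to $h_\mu$. The six elements of $\mathfrak{S}_3$ thus partition these tableaux into six families, three positive and three negative.

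The main step, and the principal obstacle, is to construct a sign-reversing injection from the negatively signed tableaux into the positively signed ones that preserves the statistic $\mathrm{inv}_f$. For each of the three odd permutations in $\mathfrak{S}_3$ I would identify a canonical local feature of the source tableau---a short column pattern detecting a failure of the $P(f)$-tableau condition under the corresponding Jacobi--Trudi shift---and then move entries between two or three adjacent columns in a way that produces a legitimate $P(f)$-tableau of a target shape associated with an even permutation. Proposition~\ref{prop:unit_intervsl_order} provides the essential geometric input: its second clause forces any element $j$ incomparable to a chain $i \prec_f k$ to satisfy $i < j < k$, which controls precisely the admissible local rearrangements. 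The delicate verification is that these three partial injections are well-defined, are mutually disjoint in image, and together cover every negatively signed tableau, leaving a set of positive tableaux that yields a manifestly nonnegative polynomial enumeration of $c_\mu(t)$. Ensuring compatibility across the three cases, as the Hessenberg function $f$ varies, is where the bounce-number-$3$ hypothesis is used in its full strength.

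Once this injection is in place, the unmatched positive $P(f)$-tableaux enumerate $c_\mu(t)$ as a polynomial in $t$ with nonnegative integer coefficients, proving the $h$-positivity of $\omega X_{G(f)}(\mathbf{x}, t)$ in the bounce-number-$3$ case.
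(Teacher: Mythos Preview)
Your high-level strategy---Gasharov's Schur expansion, Jacobi--Trudi over $\mathfrak{S}_3$, and sign-reversing injections between the resulting signed families of tableaux---matches the paper's approach closely. However, there is one genuine overreach and one substantive vagueness.

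The overreach is your claim that the injections will preserve the statistic $\mathrm{inv}_f$, so that you prove the refined conjecture with the variable $t$. The paper does \emph{not} achieve this: it proves only the $t=1$ specialization (Theorem~\ref{thm:main}), and in the concluding remarks explicitly notes that the injections constructed are not weight-preserving, so the argument does not yield Conjecture~\ref{conj:h-positive} even in the bounce-number-$3$ case. Unless you have a genuinely new idea for controlling ascents under the moves, you should not expect the injections that arise naturally from Jacobi--Trudi cancellation to respect $\mathrm{inv}_f$; the paper's authors looked for this and did not find it.

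The vagueness is in your description of the injections themselves. You speak of a ``canonical local feature'' and moving entries ``between two or three adjacent columns,'' but the paper's actual construction is far from local or canonical in that sense. After a first reduction via the easy injections $\sigma_{3\to 1}^{\square}$ and $\sigma_{3\to 1}^{\square\square}$ (Lemma~\ref{lem:3to1}), the remaining negative tableaux are handled by elaborate multi-case maps $\widetilde{\sigma}_{2\to 1}^{\square}$ and $\widetilde{\sigma}_{3\to 2}^{\square}$ (Definitions~\ref{def:sigma21} and~\ref{def:sigma32}) that can rearrange arbitrarily long blocks of entries across both rows, governed by a sequence of comparability tests; and even then $\widetilde{\sigma}_{3\to 2}^{\square}$ must be further modified into $\phi_2$ (Definition~\ref{def:phi2}) to force disjointness of images. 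The analysis also splits into four cases depending on whether $\mu_1=\mu_2+1$ and whether $\mu_2=\mu_3+1$, each with its own diagram of contributing shapes. Your sketch does not anticipate this level of casework, and the phrase ``ensuring compatibility across the three cases'' understates what is required: making the images of the separate injections disjoint is the hardest part, occupying all of Section~\ref{sec:proofs}.
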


Our main work in the present paper is to give a proof of Conjecture~\ref{conj:h-positive} when $t=1$ and $P(f)$ does not have a chain with $4$ elements.

\begin{theorem}\label{thm:main}
For a given Hessenberg function $f : [n] \rightarrow [n]$ such that a longest chain in the poset $P(f)$ has $3$ elements, $\omega X_{G(f)}(\mathbf{x})$ is $h$-positive. That is, if we write $\omega X_{G(f)}(\mathbf{x})=\sum_\la c_\la h_\la(\x)$, then $c_\la$ is a nonnegative integer.
\end{theorem}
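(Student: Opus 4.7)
The plan is to combine Gasharov's Schur expansion with the Jacobi--Trudi identity to rewrite the $h$-coefficients of $\omega X_{G(f)}(\mathbf{x})$ as a signed enumeration indexed by $\mathfrak{S}_3$, and then to construct an explicit sign-reversing injection on the resulting signed set of $P(f)$-tableaux.

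By Gasharov's theorem, $X_{G(f)}(\mathbf{x})=\sum_{\lambda} a_\lambda\, s_\lambda$, where $a_\lambda$ counts $P(f)$-tableaux of shape $\lambda$. Since every row of such a tableau is a chain in $P(f)$ and the longest chain has only three elements, every contributing shape satisfies $\lambda_1\le 3$, equivalently $\ell(\lambda')\le 3$. Applying $\omega$ and the Jacobi--Trudi identity \eqref{eq:Jacobi-Trudi} to each $s_{\lambda'}$ (padding $\lambda'$ with zeros so that it has exactly three parts) yields
\[
\omega X_{G(f)}(\mathbf{x})
=\sum_{\lambda} a_\lambda \sum_{\sigma\in\mathfrak{S}_3}\operatorname{sgn}(\sigma)\prod_{i=1}^{3}h_{\lambda'_i-i+\sigma(i)}
=\sum_{\mu} c_\mu\, h_\mu,
\]
where collecting terms by the partition $\mu$ obtained by sorting the triple $\bigl(\lambda'_i-i+\sigma(i)\bigr)_{i=1}^{3}$ expresses $c_\mu$ as a signed count $\lvert\mathcal{T}_\mu^{+}\rvert-\lvert\mathcal{T}_\mu^{-}\rvert$ of pairs $(T,\sigma)$ with $T$ a $P(f)$-tableau and $\sigma\in\mathfrak{S}_3$ of the indicated sign whose triple sorts to $\mu$.

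To establish $c_\mu\ge 0$, I will build an injection $\Phi\colon\mathcal{T}_\mu^{-}\hookrightarrow\mathcal{T}_\mu^{+}$ for every $\mu$. Given $(T,\sigma)\in\mathcal{T}_\mu^{-}$, the guiding idea is to examine the two or three columns of $T$ whose indices are nontrivially moved by $\sigma$ and to rotate a short window of entries between neighboring rows, producing a new $P(f)$-tableau $T'$; pairing $T'$ with $\sigma'=\sigma\cdot\tau$ for a suitable adjacent transposition $\tau$ flips the sign while keeping the sorted triple equal to $\mu$. The pairs of incomparable elements needed to perform the swap are supplied by Proposition~\ref{prop:unit_intervsl_order}(2): in a natural unit interval order the relative positions of entries in consecutive rows force enough incomparabilities to execute the exchange without violating the $P$-tableau conditions. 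In the two-column specialization (that is, replacing $\mathfrak{S}_3$ by $\mathfrak{S}_2$) this reproduces the argument of \cite{CH}.

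The main obstacle will be this third step: designing a single rule $\Phi$ that uniformly treats all five nonidentity $\sigma\in\mathfrak{S}_3$ and all shapes $\lambda$ with $\lambda_1\le 3$, and verifying both that every image lies in $\mathcal{T}_\mu^{+}$ and that the assignment is injective (so that distinct negative pairs cannot collide). I expect to proceed by a priority scheme---for instance, always undoing the rightmost descent of $\sigma$ first---handling each adjacent transposition $(i\,i+1)$ by its own swap rule, and then checking that these rules are mutually inverse on the appropriate subsets so that the three-cycles are reached via composition of two elementary swaps. The chain-length-$3$ hypothesis is essential because it caps the number of Jacobi--Trudi terms at $\lvert\mathfrak{S}_3\rvert=6$ and limits each tableau to three rows, keeping this case analysis finite; once $\Phi$ is verified the inequality $c_\mu\ge 0$, and hence Theorem~\ref{thm:main}, follows.
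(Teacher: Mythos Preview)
Your framework---Gasharov plus Jacobi--Trudi to write each $c_\mu$ as a signed count indexed by $\mathfrak{S}_3$, then cancel negatives by an explicit injection---is exactly the paper's approach. But what you have written is a plan, not a proof: you say you \emph{will} build $\Phi$ and you \emph{expect} a priority scheme based on two elementary swaps to work, yet you construct nothing. That construction is not bookkeeping; it is the entire substance of the theorem and occupies Sections~\ref{sec:h-positivity} and~\ref{sec:proofs} of the paper.

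Your hope that a uniform rule ``rotating a short window of entries'' and composing two adjacent-transposition swaps will suffice is not borne out. The paper must first split into four cases according to whether $\mu_1=\mu_2+1$ and/or $\mu_2=\mu_3+1$, because the set of shapes contributing to $c_\mu$ genuinely differs (some Jacobi--Trudi terms vanish, and distinct $\sigma$'s can produce the same sorted triple). After using the easy injections $\sigma_{3\to1}^{\square}$ and $\sigma_{3\to1}^{\square\square}$---which do follow directly from the order structure, much as you suggest---the residual problem in Case~I still requires two maps $\widetilde\sigma_{2\to1}^{\square}$ and $\widetilde\sigma_{3\to2}^{\square}$, each defined through a multi-branch case analysis (Definitions~\ref{def:sigma21} and~\ref{def:sigma32}), followed by a further modification $\phi_2$ (Definition~\ref{def:phi2}) solely to force the two images to be disjoint. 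These are not local swaps: for instance, case $\langle 2$-$\infty\rangle$ of $\widetilde\sigma_{2\to1}^{\square}$ replaces an entire row of the tableau, and verifying that each branch lands in the correct target set and that the branches have pairwise disjoint images is a lengthy argument. Proposition~\ref{prop:unit_intervsl_order}(2) and the $(3+1)$-free condition are indeed the tools used at each step, but invoking them in the abstract does not hand you the injection. Until you actually write down $\Phi$ and carry out those verifications, there is no proof.
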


The rest of this paper is organized as follows.
We introduce Gasharov's result on the Schur-expansion of chromatic symmetric functions of $(3+1)$-free posets as well as important properties of the corresponding natural unit interval orders in Section~\ref{sec:Prelim}. In Section~\ref{sec:bounce3} we work on the $h$-expansion of the dual chromatic symmetric functions to recognize each coefficient as a signed sum of numbers of dual $P$-tableaux of certain types. In Section~\ref{sec:h-positivity} we prove the main theorem by constructing sign reversing injections from the negative sets to positive ones, while  some technical proofs are done in Section~\ref{sec:proofs}.

\section{Preliminaries}\label{sec:Prelim}

There is important work concerning Conjecture~\ref{conj:e-positive} by Gasharov, which shows that $X_{G(f)}(\mathbf{x})$ is expanded with nonnegative coefficients in terms of Schur functions. Note that this is a consequence of  Conjecture~\ref{conj:e-positive} since $e_\la$ is Schur positive. We state Gasharov's result in its dual form by taking the conjugates, so the following definition is the dual notion of  \emph{$P$-tableau} of Gasharov \cite{Gasha}.

\begin{definition}\label{def:f-tableau}
For a Hessenberg function $f : [n]\rightarrow [n]$ and $\la\vdash n$, an \emph{$f$-tableau of shape  $\la$} is a filling of the diagram of $\la$ with $1, 2, \dots, n$ such that

i) each column is strictly increasing in terms of the ordering $\prec_f$,

ii) if $i$ and $j$ are adjacent in a row so that $j$ is to the right of $i$, then $i\nsucc_f j$.

\noindent We let $\mathcal{T}_\la (f)$ be the set of all $f$-tableaux of shape $\la$ and $d_\la(f)=|\mathcal{T}_\la (f)|$.
\end{definition}

 \begin{definition}
For a Hessenberg function $f : [n]\rightarrow [n]$, a partition $\la\vdash m\leq n$, and a subset $A\subseteq [n]$ of size $m$, a \emph{partial $f$-tableau of shape  $\la$ with content $A$} is a filling of the diagram of $\la$ with elements in $A$ satisfying two conditions for $f$-tableaux given in Definition~\ref{def:f-tableau}.
\end{definition}

\begin{proposition}[\cite{Gasha}]\label{prop:s-expansion} For a Hessenberg function $f : [n]\rightarrow [n]$,
\begin{equation}\label{eq:Gasha}
\omega X_{G(f)}(\mathbf{x})=\sum_\la d_\la(f) s_{\lambda}(\x)\,.
\end{equation}
\end{proposition}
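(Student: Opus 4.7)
Since the excerpt explicitly states that this proposition is Gasharov's theorem rephrased in dual form, my plan is to deduce it from Gasharov's original result via the involution $\omega$ and a transposition bijection on tableaux. Gasharov's original theorem gives
$$X_{G(f)}(\x) = \sum_\mu a_\mu(f)\, s_\mu(\x),$$
where $a_\mu(f)$ counts his original $P(f)$-tableaux of shape $\mu$: fillings of the diagram of $\mu$ with $[n]$ whose rows are strictly increasing in the natural integer order and whose columns satisfy an order condition relative to $\prec_f$. Applying $\omega$ and using $\omega s_\mu = s_{\mu'}$ gives
$$\omega X_{G(f)}(\x) = \sum_\mu a_\mu(f)\, s_{\mu'}(\x) = \sum_\lambda a_{\lambda'}(f)\, s_\lambda(\x),$$
so the statement reduces to the bijective identity $a_{\lambda'}(f) = d_\lambda(f)$.

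To establish this identity, I would use the obvious candidate bijection, namely transposition of the Young diagram. Under transposition, rows of shape $\lambda'$ become columns of shape $\lambda$ and vice versa, so Gasharov's row condition and column condition interchange. Verifying that this swap really sends Gasharov's conditions to the ones defining $f$-tableaux in the excerpt is the main check. The subtlety is that the excerpt's row condition ``$i \not\succ_f j$ for $j$ immediately right of $i$'' is not a linear-order condition but a \emph{non-$\succ_f$} condition, which allows $\prec_f$-incomparable entries in either order; dually, the column condition ``strictly increasing in $\prec_f$'' forces a chain. Unpacking these using the basic structure of natural unit interval orders, notably Proposition~\ref{prop:unit_intervsl_order}, confirms that the transposed conditions coincide.

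If a fully self-contained argument were wanted, I would reprove the result directly using the Jacobi--Trudi identity $s_\lambda = \det(h_{\lambda_i - i + j})$ to turn $\sum_\lambda d_\lambda(f) s_\lambda$ into a signed sum over ``decorated'' tableaux indexed by pairs $(T, \sigma)$ with $\sigma \in \mathfrak{S}_{\ell(\lambda)}$, and match it against an $h$-expansion of $\omega X_{G(f)}(\x)$ obtained by grouping proper colorings of $G(f)$ according to their ordered color-class partitions into chains of $P(f)$. A sign-reversing involution on the decorated tableaux---swapping the tails of two consecutive rows at the first local violation of the $f$-tableau conditions---cancels all but the positive terms. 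This is essentially Gasharov's own argument translated to the dual setting, and the hardest part, for either approach, is making the condition-matching explicit and checking that the involution uses the $(3+1)$- and $(2+2)$-free property of $P(f)$ to guarantee a canonical swap is always available.
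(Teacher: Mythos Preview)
The paper does not prove this proposition; it is stated with a citation to Gasharov and no argument is given. Your plan---apply $\omega$ to Gasharov's original expansion and transpose tableaux to identify $a_{\lambda'}(f)=d_\lambda(f)$---is exactly the standard dualization and is correct in outline.

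One point needs correcting. You describe Gasharov's $P$-tableaux as having rows ``strictly increasing in the natural integer order.'' That is not Gasharov's condition: his rows are chains strictly increasing in the \emph{partial} order $\prec_f$, while his column condition is that an entry immediately below another is not $\prec_f$-smaller. With the correct formulation the transposition bijection works cleanly: Gasharov's row-chain condition becomes the column condition (i) of an $f$-tableau, and his column non-descent condition becomes the row condition (ii). No appeal to Proposition~\ref{prop:unit_intervsl_order} is needed for the matching; the conditions transpose on the nose. Your alternative self-contained sketch via Jacobi--Trudi and a sign-reversing involution is also a valid route and is essentially Gasharov's original proof read through $\omega$.
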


\begin{example} Let $f : [4]\rightarrow [4]$ be a function given by $(f(1), f(2), f(3), f(4))=(2, 3, 4, 4)$ so that $1\prec_f 3,  1\prec_f 4$, and $2\prec_f 4$. Then, there are eight $f$-tableaux of shape $(4)$, four and two $f$-tableaux of shape $(3,1)$ and $(2,2)$, respectively.
See Figure~\ref{fig:h-tableaux1}. Examples of `partial' $f$-tableaux include $\tableau{1&2\\4}$ and $\tableau{2&4}$.

\begin{figure}[ht]
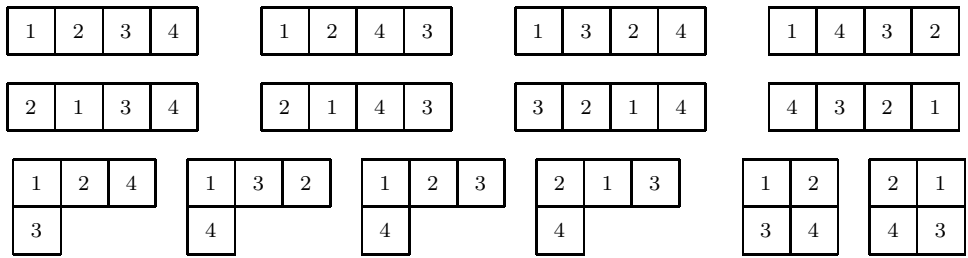

\begin{center}
$\tableau{1&2&3&4}\qquad \tableau{1&2&4&3}\qquad \tableau{1&3&2&4} \qquad\tableau{1&4&3&2}$
\bigskip

$\tableau{2&1&3&4}\qquad \tableau{2&1&4&3}\qquad \tableau{3&2&1&4} \qquad\tableau{4&3&2&1}$
\bigskip

$\tableau{1&2&4\\3}\quad \tableau{1&3&2\\4}\quad \tableau{1&2&3\\4} \quad\tableau{2&1&3\\4}    \quad\quad  \tableau{1&2\\3&4} \quad   \tableau{2&1\\4&3}$
\end{center}
 \caption{\label{fig:h-tableaux1}  $f$-tableaux for $f=(2, 3, 4, 4)$. }
\end{figure}

Hence, we have $\omega X_{G(f)}(\mathbf{x})=8s_{(4)}+4s_{(3,1)}+2s_{(2,2)}$.
\end{example}

Hessenberg functions correspond to Dyck paths in a natural way, and we define the bounce number of a Hessenberg function as a statistic of the bounce path of the corresponding Dyck path: (See \cite{Loe} for example.)

\begin{definition}  For a Hessenberg function $f : [n]\rightarrow [n]$, define a sequence as $x_1=f(1)$ and $x_{l+1}=f(x_l+1)$ for $l=1, 2, \dots$ as long as $x_l<n$.
\begin{enumerate}
\item  The \emph{Dyck path of $f$} is the path from $(0,0)$ to $(n,n)$ such that $n$ east steps are from $(i-~1,f(i))$ to $(i, f(i))$ for $i=1, 2, \dots, n$.
\item The \emph{bounce path of $f$} is the path connecting the points  $(0,0)$, $(0, x_1)$, $(x_1, x_1)$, $(x_1, x_2)$, $(x_2, x_2)$, $(x_2, x_3)$, $\dots$, $(n,n)$, vertically and horizontally alternatingly.
\item The \emph{bounce number $b(f)$ of $f$} is the number of points the bounce path of $f$ hits the diagonal line $y=x$, except the initial point $(0,0)$, i.e. the $k$ such that $x_k=n$.
\item  We let $P_l (f)=\{x_{l-1}+1, \dots, x_l\}$ for $l=1, \dots, b(f)$ where we set $x_0=0$, so that  $P_1(f), P_2 (f), \dots, P_{b(f)} (f)$ form a set partition of $[n]$.
\item We call the unit square box $\{(x, y)\,|\, i-1<x<i, j-1<y<j\}$ for $1\leq i< j \leq n$, \emph{$(i, j)$-square}.
\end{enumerate}
\end{definition}

\begin{example} If $f$ is given by $(f(1), f(2), f(3), f(4))=(2, 3, 4, 4)$, then $x_1=2$, $x_2=f(3)=4$ hence the bounce number $b(f)$ of $f$ is $2$ and $P_1(f)=\{1, 2\}, P_2(f)=\{3, 4\}$. The paths from $(0,0)$ to $(4,4)$ drawn as solid line and dashed line are the Dyck path of $f$ and the bounce path of $f$, respectively in Figure~\ref{fig:bounce path}. The $(i, j)$-squares are indicated in the figure also.

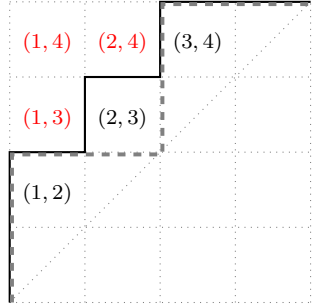
\begin{figure}[ht]
\begin{center}
    \begin{tikzpicture}
    \draw[draw=gray, dotted] (0,0) --(4,0);
     \draw[draw=gray, dotted] (0,1) --(4,1);
      \draw[draw=gray, dotted] (0,2) --(4,2);
       \draw[draw=gray, dotted] (0,3) --(4,3);
        \draw[draw=gray, dotted] (0,4) --(4,4);

    \draw[draw=gray, dotted] (0,0) --(0,4);
     \draw[draw=gray, dotted] (1,0) --(1,4);
      \draw[draw=gray, dotted] (2,0) --(2,4);
       \draw[draw=gray, dotted] (3,0) --(3,4);
        \draw[draw=gray, dotted] (4,0) --(4,4);
         \draw[draw=gray, dotted] (0,0) --(4,4);

        \draw[thick] (0,0)--(0,2)--(1,2)--(1,3)--(2,3)--(2,4)--(4,4);
         \draw[draw=gray, line width=1.5pt, dashed] (0.03,0) --(0.03,1.97)--(2.03,1.97)--(2.03, 3.97)--(3.97, 3.97);
        \node at (0.5,1.45) {\tiny$(1,2)$}; \node at (0.5,2.45) {\tiny\red$(1,3)$}; \node at (0.5,3.45) {\tiny\red$(1,4)$};
         \node at (1.5,2.45) {\tiny$(2,3)$}; \node at (1.5,3.45) {\tiny\red$(2,4)$};
         \node at (2.5,3.45) {\tiny$(3,4)$};
    \end{tikzpicture}
  \end{center}
  \caption{\label{fig:bounce path} The Dyck path and the bounce path of $f=(2, 3, 4, 4)$. }
\end{figure}
\end{example}

\begin{lemma}\label{lem:basic} Let $f$ be a Hessenberg function.
\begin{enumerate}
\item $i<j$ are incomparable with respect to the order $\prec_f$ if and only if  $(i, j)$-square is below the Dyck path of $f$.
\item  If $b\prec_f c$ in $P(f)$ and $a\leq b$, $c\leq d$, then $a\prec_f c$ and $b\prec_f d$.
\item Two elements $a, b$ in $P_l(f)$  are incomparable with respect to the order $\prec_f$ for all $l$.
\item  Let $a_1\prec_f a_2 \prec_f \cdots \prec_f a_k$ be a chain in $P(f)$ with $a_i\in P_{l(i)}(f)$, $i=1, \dots, k$. Then  $l(i) \leq l(i+1)$ for $i \leq k-1,$ and $k\leq b(f)$.  Moreover, if $a_1\prec_f a_2 \prec_f \cdots \prec_f a_{b(f)}$ is a chain, then $a_l\in P_l(f)$ for all $l=1, 2, \dots , b(f)$.
\item \emph{[(3+1)-free condition]} For a chain $a_1\prec_f a_2 \prec_f a_3$ and an element $b$ of $P(f)$, if $a_1\nprec_f b$ then $b\prec_f a_3$ and if $b\nprec_f a_3$ then $a_1\prec_f  b$.
\end{enumerate}
\end{lemma}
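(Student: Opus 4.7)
The plan is to derive each of the five items directly from the single definitional equivalence $i \prec_f j \Leftrightarrow i < j \text{ and } f(i) < j$ and the fact that the Hessenberg function $f$ is non-decreasing. For (1), I would unpack the Dyck path: its east step at column $i$ sits at height $f(i)$, so the $(i,j)$-square (whose top edge is at height $j$) lies below the path exactly when $f(i) \geq j$, which is the negation of $i \prec_f j$. For (2), the hypothesis $b \prec_f c$ reads $f(b) < c$, and monotonicity together with $a \leq b$, $c \leq d$ produces the chain $f(a) \leq f(b) < c \leq d$; combined with $a \leq b < c$ and $b < c \leq d$, both conclusions $a \prec_f c$ and $b \prec_f d$ follow at once.

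For (3), I would fix $a < b$ in the same block $P_l(f)$ and invoke the recursion $x_l = f(x_{l-1}+1)$: since $a \geq x_{l-1}+1$, monotonicity yields $f(a) \geq f(x_{l-1}+1) = x_l \geq b$, so $a \not\prec_f b$. For (4), any two consecutive chain elements $a_i \prec_f a_{i+1}$ must lie in distinct blocks by (3), so the block indices $l(i)$ are strictly increasing within $\{1,\dots,b(f)\}$; this immediately gives $k \leq b(f)$, and in the extremal case $k=b(f)$ forces $l(i)=i$ for every $i$.

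The only item calling for a short case split is (5). To show $b \prec_f a_3$ from $a_1 \not\prec_f b$, I would first observe that $a_1 \not\prec_f b$ forces $b \leq f(a_1)$ (whichever clause of the definition fails, and using $a_1 \leq f(a_1)$ in the case $b \leq a_1$); then the chain $b \leq f(a_1) < a_2$ combined with monotonicity gives $f(b) \leq f(a_2) < a_3$, while $b < a_2 < a_3$ supplies the strict inequality on indices. The converse statement is essentially its contrapositive: if $b \not\prec_f a_3$, then either $b \geq a_3$, in which case $a_1 \prec_f b$ is immediate since $f(a_1) < a_2 < a_3 \leq b$, or $f(b) \geq a_3$, in which case one must have $b > a_2$ (else monotonicity would give $f(b) \leq f(a_2) < a_3$, a contradiction), and then $f(a_1) < a_2 < b$ gives $a_1 \prec_f b$ again.

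The main obstacle, if any, is purely organizational: keeping track of which clause of $\not\prec_f$ fails in each subcase of (5) and threading the monotonicity inequalities through cleanly. No deeper combinatorics is needed, since the whole lemma is a catalogue of direct consequences of how $\prec_f$ interacts with a non-decreasing $f$ and with the bounce recursion $x_l = f(x_{l-1}+1)$.
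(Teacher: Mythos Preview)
Your proposal is correct and follows essentially the same approach as the paper. The paper is terser---it declares (1)--(3) immediate, derives (4) from (2) and (3), and for (5) observes that $a_1\nprec_f b$ forces $b\le a_2$ (otherwise (2) applied to $a_1\prec_f a_2$ and $a_2\le b$ gives $a_1\prec_f b$) and then invokes (2) again to get $b\prec_f a_3$---whereas you unpack the same monotonicity inequalities inline; the underlying argument is identical.
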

\begin{proof} (1), (2) and (3) are immediate from the definition of the order relation $\prec_f$, and (4) follows from  (2) and (3).  Suppose that $a_1\prec_f a_2 \prec_f a_3$ and $a_1\nprec_f b$. Then $b\leq a_2$ must hold, for otherwise
we have $a_1 \prec_f b$. This, with the condition $a_2\prec_f a_3$ implies $b\prec_f a_3$, completing the proof of a part of the (3+1)-free condition. A similar argument works for the other part of (5).
\end{proof}

\begin{remark} For a given Hessenberg function $f\,:\, [n] \rightarrow [n]$, the bounce number $b(f)$ is the same as the independence number of $G(f)$ and the length of the longest chain in $P(f)$.
\end{remark}

The following lemma is immediate from Lemma~\ref{lem:basic}, yet plays an important role in the subsequent arguments.

\begin{lemma}[Lemma 2.15 in \cite{CH}] Let $\omega X_{G(f)}(\mathbf{x})=\sum_\la d_\la(f) s_{\lambda}(\x)=\sum_\la c_\la(f) h_\la(\x)$ for a Hessenberg function  $f$. Then, $d_\la(f)=0$ for a partition $\la$ with $\ell(\la)>b(f)$ and therefore $c_\la(f)=0$ for a partition $\la$ with $\ell(\la)>b(f)$.
\end{lemma}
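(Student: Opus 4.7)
The plan is to prove the two assertions in sequence: first the vanishing of $d_\la(f)$, which is purely combinatorial, and then deduce the vanishing of $c_\la(f)$ from triangularity of the Kostka matrix.

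For the Schur-side assertion, Proposition~\ref{prop:s-expansion} gives $d_\la(f)=|\mathcal T_\la(f)|$, so it suffices to show $\mathcal T_\la(f)=\emptyset$ whenever $\ell(\la)>b(f)$. Given any $T\in\mathcal T_\la(f)$, condition (i) in the definition of an $f$-tableau forces the entries of the first column of $T$, read top to bottom, to form a strictly increasing chain of length $\ell(\la)$ in $(P(f),\prec_f)$. By Lemma~\ref{lem:basic}(4), every chain in $P(f)$ has length at most $b(f)$, so no such $T$ can exist once $\ell(\la)>b(f)$, and therefore $d_\la(f)=0$.

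For the homogeneous-side assertion, I would pass through the Schur basis using the Kostka expansion $h_\mu=\sum_\la K_{\la\mu}s_\la$, together with the standard facts that $K_{\la\mu}\neq 0$ only when $\la\geq\mu$ in dominance order (with $K_{\mu\mu}=1$) and that $\la\geq\mu$ in dominance implies $\ell(\la)\leq\ell(\mu)$ (equivalently $\la'_1\leq\mu'_1$). Setting
\[
V_k=\mathrm{span}\{h_\mu \,:\, \mu\vdash n,\ \ell(\mu)\leq k\}\quad\text{and}\quad W_k=\mathrm{span}\{s_\la \,:\, \la\vdash n,\ \ell(\la)\leq k\},
\]
the triangularity above gives $V_k\subseteq W_k$, and the two spaces have equal dimension since they are indexed by the same set of partitions. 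Hence $V_k=W_k$ for every $k$. Taking $k=b(f)$, the first part places $\omega X_{G(f)}(\x)$ in $W_{b(f)}=V_{b(f)}$, and uniqueness of the $h$-expansion forces $c_\la(f)=0$ whenever $\ell(\la)>b(f)$.

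The only delicate point is the dominance-to-length implication underlying Kostka triangularity; once that is in hand, the rest is a routine dimension count, so I do not expect a genuine obstacle in this lemma.
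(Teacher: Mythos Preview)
Your argument is correct. The first half (vanishing of $d_\la(f)$) is identical to the paper's: the first column of an $f$-tableau is a chain in $P(f)$, and Lemma~\ref{lem:basic}(4) bounds chain lengths by $b(f)$.

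For the second half you take a different route from the paper. The paper invokes the Jacobi--Trudi identity \eqref{eq:Jacobi-Trudi} directly: the $\ell(\la)\times\ell(\la)$ determinant expresses $s_\la$ as a signed sum of $h_\mu$'s each having at most $\ell(\la)$ parts, which immediately gives $W_k\subseteq V_k$ in your notation. You instead establish the reverse inclusion $V_k\subseteq W_k$ via Kostka triangularity and dominance, then close with a dimension count to get equality. Both arguments are short and standard; the paper's is a one-liner given that Jacobi--Trudi is already stated, while yours has the mild advantage of not relying on a signed expansion and of making the equality $V_k=W_k$ explicit. Either way the conclusion follows.
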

\begin{proof}  There cannot be a chain longer than the bounce number of $f$ due to Lemma~\ref{lem:basic} (4), and there is no $f$-tableau of shape $\la$ if $\ell(\la)>b(f)$. Hence, by Proposition~\ref{prop:s-expansion} we can conclude that  $d_\la(f)=0$ if $\ell(\la)>b(f)$. The Jacobi-Trudi identity \eqref{eq:Jacobi-Trudi} completes the proof since $h_\mu$ appears in the expansion of $s_\la$ only when $\ell(\mu)\leq \ell(\la)$.
\end{proof}

With the notion of bounce number, the main theorem (Theorem~\ref{thm:main}) of the current paper can be stated as follows.

\medskip
{\bf Main Theorem.} For a given Hessenberg function $f : [n] \rightarrow [n]$ with   $b(f)= 3$, $\omega X_{G(f)}(\mathbf{x})$ is $h$-positive.

\section{$h$-expansion of $\omega X_{G(f)}(\mathbf{x})$ when $b(f)=3$}\label{sec:bounce3}

Let us fix a positive integer $n$ and a Hessenberg function $f: [n] \rightarrow [n]$ with $b(f)=3$ and consider the expansion of  $\omega X_{G(f)}(\mathbf{x})$ into the sum of homogeneous symmetric functions $h_\mu$. We use $\mathrm{Par}(n, \leq 3)$ to denote the set of all partitions of $n$ with length at most $3$.

 If $\la=(\la_1, \la_2, \la_3)\in \mathrm{Par}(n, \leq 3)$ is a partition of $n$ with length at most $3$, allowing $0$ for $\la_2$ and $\la_3$, then Jacobi-Trudi identity \eqref{eq:Jacobi-Trudi} becomes
\begin{equation}
s_\la = \mathrm{det}\left[
  \begin{array}{lll}
     h_{\la_1} &  h_{\la_1+1} &  h_{\la_1+2} \\
     h_{\la_2-1} & h_{\la_2} &  h_{\la_2+1}  \\
     h_{\la_3-2} &  h_{\la_3-1} &  h_{\la_3}
  \end{array} \right].
\end{equation}

We define two (signed) sets associated with $\la=(\la_1, \la_2, \la_3)$:
\begin{align*}
S(\la)=\{&(\la_1, \la_2, \la_3)^+, (\la_1, \la_2+1, \la_3-1)^-,\\
& (\la_1+1, \la_2-1, \la_3)^-, (\la_1+1, \la_2+1, \la_3-2)^+,\\
&(\la_1+2, \la_2-1, \la_3-1)^+, (\la_1+2, \la_2, \la_3-2)^-\}, \mbox{ and }\\
\widetilde{S(\la)}=\{ & \widetilde{\alpha} \,|\, \alpha\in S(\la)\}\,,
\end{align*}
where $\widetilde{\alpha}$ is obtained by rearranging the parts of $\alpha$ so that the parts are non-increasing and the sign of $\widetilde{\alpha}$ is the same as the sign of $\alpha$.

Then we let $$\mathcal{S}=\bigcup_{\la\in \mathrm{Par}(n, \leq 3)} S(\la) \,\, \mbox{ and }  \,\, \widetilde{\mathcal{S}}=\bigcup_{\la\in \mathrm{Par}(n, \leq 3)} \widetilde{S(\la)}\,.$$

We use $\mathrm{sgn}(\alpha, \lambda)$ for the sign of $\alpha\in S(\la)$, which we write to the upper right of $\alpha$ for convenience in the definition of $S(\la)$. Remember that $h_\alpha=h_{\alpha_1}  h_{\alpha_2} h_{\alpha_3}$ for each $\alpha=(\alpha_1, \alpha_2, \alpha_3)\in \mathcal{S}$, where $h_0=1$ and $h_{-k}=0$ for $k$ a positive integer.

From Proposition~\ref{prop:s-expansion} and the Jacobi-Trudi identity, we have
\begin{equation}\label{eq:h-expansion}
\omega X_{G(f)}(\mathbf{x})=\sum_\la d_\la(f) s_{\lambda}(\x)\\
                       =\sum_\la d_\la(f) \sum_{\alpha\in S(\la) } \mathrm{sgn}(\alpha, \la) h_\alpha\,.
\end{equation}

Now, we let
$$\mathcal{C}_\mu=\{ \alpha\in \mathcal{S}\,|\, \widetilde{\alpha}=\mu  \}\, \mbox{ for $\mu \in \mathrm{Par}(n, \leq 3) $, and }$$

$$\mathcal{K}_\alpha=\{\la \in \mathrm{Par}(n, \leq 3)  \,|\, \alpha\in S(\la)\} \mbox{ for $\alpha\in \mathcal{S}$}\,.$$

Then \eqref{eq:h-expansion} becomes

\begin{equation}\label{eq:main}
\omega X_{G(f)}(\mathbf{x})=\sum_{\mu\in \mathrm{Par}(n, \leq 3)} \left( \sum_{\alpha\in\mathcal{C}_\mu }\,\,\sum_{\la \in \mathcal{K}_\alpha}  \mathrm{sgn}(\alpha, \la)\,d_\la(f) \right)  h_\mu \,.
\end{equation}

\bigskip

\begin{lemma}\label{lem:C_mu} For any $\mu=(\mu_1, \mu_2, \mu_3)\in \mathrm{Par}(n, \leq 3)$,
$$\mathcal{C}_\mu=
\begin{dcases}
\{(\mu_1, \mu_2, \mu_3)\} & \mbox{ if } \mu_1\not=\mu_2+1 \text{ and }\, \mu_2\not=\mu_3+1\,, \\
\{(\mu_1, \mu_2, \mu_3), (\mu_2, \mu_1, \mu_3) \} &  \text{ if } \mu_1=\mu_2+1 \text{ and }\, \mu_2\not=\mu_3+1\,, \\
\{(\mu_1, \mu_2, \mu_3), (\mu_1, \mu_3, \mu_2) \} &  \text{ if } \mu_1\not=\mu_2+1 \text{ and }\, \mu_2=\mu_3+1\,, \\
\{(\mu_1, \mu_2, \mu_3), (\mu_2, \mu_1, \mu_3), (\mu_1, \mu_3, \mu_2) \} &  \text{ if } \mu_1=\mu_2+1 \text{ and }\, \mu_2=\mu_3+1 \,.
\end{dcases}
$$
\end{lemma}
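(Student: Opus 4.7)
The plan is direct case analysis. By the Jacobi--Trudi construction of $S(\lambda)$, an element $\alpha \in S(\lambda)$ is exactly a triple of the form $\alpha_i = \lambda_i + \sigma(i) - i$ for some $\sigma \in \mathfrak{S}_3$; equivalently, given a candidate $\alpha$, the triple $\lambda_i = \alpha_i - \sigma(i) + i$ must be a partition with at most three parts, i.e., non-increasing and non-negative. Thus $\mathcal{C}_\mu$ consists of those rearrangements $\alpha$ of the parts of $\mu$ for which at least one of the six choices of $\sigma \in \mathfrak{S}_3$ recovers a valid partition $\lambda$. I would carry out these $6 \times 6 = 36$ inequality checks, most of which fail immediately.

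First, the trivial choice $\alpha = (\mu_1, \mu_2, \mu_3)$ is always in $\mathcal{C}_\mu$ via $\sigma = \mathrm{id}$ and $\lambda = \mu$. Next, for the adjacent swap $\alpha = (\mu_2, \mu_1, \mu_3)$ (nontrivial only when $\mu_1 > \mu_2$), testing each $\sigma$ shows that the unique one producing a non-increasing $\lambda$ is the transposition swapping positions $2$ and $3$, which gives $\lambda = (\mu_2,\,\mu_1 - 1,\,\mu_3 + 1)$; the partition condition $\mu_2 \geq \mu_1 - 1$ combined with $\mu_1 > \mu_2$ forces $\mu_1 = \mu_2 + 1$. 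A symmetric analysis for $\alpha = (\mu_1, \mu_3, \mu_2)$ singles out the transposition swapping positions $1$ and $2$, yielding $\lambda = (\mu_1 - 1,\,\mu_3 + 1,\,\mu_2)$, and forces $\mu_2 = \mu_3 + 1$.

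The three remaining rearrangements $(\mu_2, \mu_3, \mu_1)$, $(\mu_3, \mu_1, \mu_2)$, $(\mu_3, \mu_2, \mu_1)$ each displace some part of $\mu$ by two positions, and a routine check shows that for every $\sigma \in \mathfrak{S}_3$ the resulting triple $\lambda$ violates either $\lambda_1 \geq \lambda_2$ or $\lambda_2 \geq \lambda_3$. Assembling these facts partitions the possibilities exactly according to whether $\mu_1 = \mu_2 + 1$ and whether $\mu_2 = \mu_3 + 1$, yielding the four stated cases.

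The argument is essentially bookkeeping, so I do not foresee a serious obstacle. The main care point is edge behavior when $\mu$ has repeated parts: then two distinct $\pi \in \mathfrak{S}_3$ may give the same rearrangement $\alpha$, so one must avoid double-counting, and one must verify that the partition $\lambda$ recovered above remains genuine (for instance, that $\lambda_2 \geq \lambda_3$ still holds when $\mu_2 = \mu_3$).
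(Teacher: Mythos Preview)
Your approach is correct and essentially the same as the paper's: both perform a six-case analysis indexed by $\mathfrak{S}_3$. You parametrize by rearrangements of $\mu$ and solve backwards for $\lambda$, whereas the paper parametrizes by the six elements $\alpha\in S(\lambda)$ and reads off the resulting $\mu$; these are inverse viewpoints of the same bookkeeping, and your caveat about repeated parts matches the paper's implicit handling of the degenerate cases.
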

\begin{proof} We first remark that, for a partition $\la\in\mathrm{Par}(n, \leq 3)$ the only elements of $S(\la)$ that can be a \emph{non-partition} are $ (\la_1, \la_2+1, \la_3-1)$ and  $(\la_1+1, \la_2-1, \la_3)$. Moreover, $(\la_1, \la_2+1, \la_3-1)$ is not a partition if and only if $\la_1=\la_2$ and $(\la_1+1, \la_2-1, \la_3)$ is not a partition if and only if $\la_2=\la_3$. For a $\mu=(\mu_1, \mu_2, \mu_3)\in \mathrm{Par}(n, \leq 3)$, $\alpha=(\alpha_1, \alpha_2, \alpha_3)\in \mathcal{C}_\mu$ if and only if $\widetilde{\alpha}=\mu$ and $\alpha\in S(\la)$ for some $\la\in \mathrm{Par}(n, \leq 3)$.

Let $\alpha=(\alpha_1, \alpha_2, \alpha_3)$ be an element of $\mathcal{C}_\mu$ where $\alpha\in S(\la)$ for $\la\in \mathrm{Par}(n, \leq 3)$. Then, one of the following cases happens:
\begin{itemize}
\item[i.] $\alpha= (\la_1, \la_2, \la_3)=\mu.$
\item[ii.] $\alpha= (\la_1, \la_2+1, \la_3-1)=
                        \begin{dcases} (\mu_1, \mu_2, \mu_3) & if \, \la_1>\la_2,\\
                                                 (\mu_2, \mu_1, \mu_3) & if \, \la_1=\la_2.
                         \end{dcases}$
\item[iii.] $\alpha=(\la_1+1, \la_2-1, \la_3)=
                          \begin{dcases} (\mu_1, \mu_2, \mu_3) & if \, \la_2>\la_3,\\
                                                 (\mu_1, \mu_3, \mu_2) & if \, \la_2=\la_3.
                         \end{dcases}$
\item[iv.] $\alpha= (\la_1+1, \la_2+1, \la_3-2)=\mu$.
\item[v.] $\alpha= (\la_1+2, \la_2-1, \la_3-1)=\mu$.
\item[vi.] $\alpha=  (\la_1+2, \la_2, \la_3-2)=\mu$.
\end{itemize}

Therefore, we can conclude that $\alpha= (\mu_1, \mu_2, \mu_3) $ is an element of $\mathcal{C}_\mu$ for all $\mu\in  \mathrm{Par}(n, \leq 3)$, $\alpha=(\mu_2, \mu_1, \mu_3)$ is an element of  $\mathcal{C}_\mu$ if $\mu_1=\mu_2+1$, and $(\mu_1, \mu_3, \mu_2)$ is an element of $\mathcal{C}_\mu$ if $\mu_2=\mu_3+1$.

\end{proof}

\begin{lemma}\label{lem:T_a} For  $\alpha=(\alpha_1, \alpha_2, \alpha_3)\in  \mathcal{S}$, let
\begin{align*} T(\alpha)=\{ &(\alpha_1-2, \alpha_2, \alpha_3+2), (\alpha_1-2, \alpha_2+1, \alpha_3+1), \\
&(\alpha_1-1, \alpha_2-1, \alpha_3+2), (\alpha_1-1, \alpha_2+1, \alpha_3),\\
&(\alpha_1, \alpha_2-1, \alpha_3+1), (\alpha_1, \alpha_2, \alpha_3)  \}\,.
\end{align*}
Then, $\mathcal{K}_\alpha=T(\alpha)\cap \mathrm{Par}(n, \leq 3)$.
\end{lemma}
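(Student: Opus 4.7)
The plan is to regard the definition of $\alpha \in S(\lambda)$ as a system of six linear equations $\alpha = \lambda + v_i$, where $v_i$ runs through the six perturbation vectors
\[
v_1=(0,0,0),\ v_2=(0,1,-1),\ v_3=(1,-1,0),\ v_4=(1,1,-2),\ v_5=(2,-1,-1),\ v_6=(2,0,-2),
\]
each of which has coordinate sum zero. Inverting these to solve for $\lambda$ in terms of $\alpha$ gives $\lambda = \alpha - v_i$ for $i=1,\dots,6$, and a direct comparison shows that the resulting six triples are precisely the six elements listed in $T(\alpha)$. For instance, $v_1$ yields $(\alpha_1,\alpha_2,\alpha_3)$ and $v_5$ yields $(\alpha_1-2,\alpha_2+1,\alpha_3+1)$.

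Next I would verify the two inclusions. If $\lambda \in \mathcal{K}_\alpha$, then $\lambda \in \mathrm{Par}(n,\leq 3)$ and $\alpha \in S(\lambda)$, so $\alpha = \lambda + v_i$ for some $i$, whence $\lambda = \alpha - v_i \in T(\alpha)$. Conversely, if $\lambda \in T(\alpha) \cap \mathrm{Par}(n,\leq 3)$, then $\lambda = \alpha - v_i$ for some $i$, so $\alpha = \lambda + v_i$ is the $i$th listed element of $S(\lambda)$, giving $\lambda \in \mathcal{K}_\alpha$.

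The only point that merits attention is the compatibility of the degree constraint. Since every $\alpha \in \mathcal{S}$ lies in $S(\lambda)$ for some $\lambda \vdash n$, and since each $v_i$ has coordinate sum $0$, we have $\alpha_1+\alpha_2+\alpha_3 = n$. Consequently every triple in $T(\alpha)$ also has coordinate sum $n$, so intersecting with $\mathrm{Par}(n,\leq 3)$ reduces to the combinatorial condition that the triple be weakly decreasing with a nonnegative last entry, which is exactly what is needed to complete the proof.

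There is no real obstacle here: the lemma amounts to a tautological case-by-case inversion of the six defining relations of $S(\lambda)$, and the main thing to double-check is simply the bookkeeping that the six inverses exhaust $T(\alpha)$ and that no spurious partition-of-$n$ condition has been lost along the way.
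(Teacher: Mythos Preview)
Your proof is correct and follows essentially the same approach as the paper's: both argue that $T(\alpha)$ is constructed precisely by inverting the six defining relations of $S(\lambda)$, so the two inclusions are immediate from the definitions. You make the inversion more explicit by naming the perturbation vectors $v_i$, but the substance is identical.
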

\begin{proof} For any $\la\in T(\alpha)\cap \mathrm{Par}(n, \leq 3)$, $\la\in \mathcal{K}_\alpha$ due to the way how $T(\alpha)$ is defined. Hence, we have  $T(\alpha)\cap \mathrm{Par}(n, \leq 3)\subseteq \mathcal{K}_\alpha$. Now, let $\la\in \mathrm{Par}(n, \leq 3)$ be an element of  $\mathcal{K}_\alpha$ so that $\alpha\in S(\la)$. Then, by the definitions of $S(\la)$ and $T(\alpha)$, $\la$ must be in $T(\alpha)$.
\end{proof}

\label{subsection:diagrams}

Since we have Lemma~\ref{lem:C_mu} and Lemma~\ref{lem:T_a}, we are ready to analyse the coefficient
\begin{equation}\label{eq:coefficient}
 c_\mu:=\sum_{\alpha\in\mathcal{C}_\mu }\,\,\sum_{\la \in \mathcal{K}_\alpha}  \mathrm{sgn}(\alpha, \la)\,d_\la(f)
\end{equation}
of $h_\mu$ in the expansion of $\omega X_{G(f)}(\x)$, given in \eqref{eq:main}.

There are four cases to be considered according to Lemma~\ref{lem:C_mu}. In each case, we draw a diagram of $ \cup_{\alpha\in \mathcal{C}_\mu} \mathcal{K}_\alpha$ in which two partitions $\xi, \eta \in \cup_{\alpha\in \mathcal{C}_\mu} \mathcal{K}_\alpha$ are connected by an arrow, $\xi \stackrel{\sigma_{j, i}^k}{\longrightarrow} \eta$ for $1\leq i<j\leq 3$ and $k\in\{1, 2\}$, if $\eta$ is obtained from $\xi$ by subtracting $k$  from the $j$th part of $\xi$ and adding $k$ to the $i$th  part of $\xi$. We write $\mathrm{sgn}(\alpha, \la)$ to the upper right of each $\la\in\mathcal{K}_\alpha$.
We remark that some elements in the diagram can be obsolete, i.e. can be a non-partition, depending on the given partition $\mu$. Moreover, if $\eta$ is not a partition where $\xi \stackrel{\sigma_{j, i}^k}{\longrightarrow} \eta$ in the diagram, then $\xi$ is not a partition either.

\medskip

{\bf Case I} \quad If $\mu_1\not=\mu_2+1$ and $\mu_2\not=\mu_3+1$, then $\mathcal{C}_\mu=\{\alpha=(\mu_1, \mu_2, \mu_3)\}$ and the diagram of $\mathcal{K}_\alpha$ is given in Figure~\ref{fig:diagram I}.

\begin{figure}
\begin{center}
$$\mathcal{K}_\alpha=\quad
\begin{tikzcd}[row sep=huge, column sep=tiny]
(\mu_1-2, \mu_2, \mu_3+2)^- \arrow[r,"\sigma_{3,2}^1"]\arrow[d,"\sigma_{2,1}^1"] \arrow[rrdd,"\sigma_{3,1}^2", pos=0.3 ]& (\mu_1-2, \mu_2+1, \mu_3+1)^+ \arrow[rd,"\sigma_{3,1}^1"]\arrow[dd,"\sigma_{2,1}^2", pos=0.2, crossing over] & \\
(\mu_1-1, \mu_2-1, \mu_3+2)^+\arrow[rd,"\sigma_{3,1}^1"]\arrow[rr,"\sigma_{3,2}^2", pos=0.8, crossing over]  &&(\mu_1-1, \mu_2+1, \mu_3)^-\arrow[d,"\sigma_{2,1}^1"]\\
& (\mu_1, \mu_2-1, \mu_3+1)^-\arrow[r,"\sigma_{3,2}^1"]&{\alpha=(\mu_1, \mu_2, \mu_3)^+}
\end{tikzcd}
$$
 \end{center}
  \caption{\label{fig:diagram I} The diagram of  $\cup_{\alpha\in \mathcal{C}_\mu} \mathcal{K}_\alpha$ when $\mu_1\ne\mu_2+1$ and $\mu_2 \ne \mu_3+1$. }
\end{figure}

\medskip

{\bf Case II}\quad If $\mu_1=\mu_2+1$ and $\mu_2\not=\mu_3+1$, then $\mathcal{C}_\mu=
\{\alpha=(\mu_1, \mu_2, \mu_3), \beta=(\mu_2, \mu_1, \mu_3) \}$. In this case, $ (\mu_1-2, \mu_2, \mu_3+2), (\mu_1-2, \mu_2+1, \mu_3+1), (\mu_1-1, \mu_2+1, \mu_3) \in T(\alpha)$ are not partitions, and only $(\mu_1-1, \mu_2+1, \mu_3)$ is a partition in $T(\beta)$.   Hence $\mathcal{K}_\alpha\cap \mathcal{K}_\beta=\varnothing$, and the signs with $\la$ in the diagram are for either $\mathrm{sgn}(\alpha, \la)$ or $\mathrm{sgn}(\beta, \la)$ depending whether $\la\in \mathcal{K}_\alpha$ or  $\la\in \mathcal{K}_\beta$.  The elements of $\mathcal{K}_\alpha$ are colored in red and the ones in $\mathcal{K}_\beta$ are colored in blue. Again, note that some element in the diagram can be obsolete depending on the given $\mu$. See Figure~\ref{fig:diagram II}.   We remark that $\beta$ itself is not an element of $\mathcal{K}_\alpha\cup \mathcal{K}_\beta$ since it is not a partition.

\begin{figure}[ht]
\begin{center}
$${\red \mathcal{K}_\alpha} \cup {\blue \mathcal{K}_\beta}=
\begin{tikzcd}[row sep=huge, column sep=small]
{\red(\mu_1-1, \mu_2-1, \mu_3+2)^+}\arrow[rd,"\sigma_{3,1}^1"] \arrow[r,"\sigma_{3,2}^1"] & {\blue(\mu_1-1, \mu_2, \mu_3+1)^- }\arrow[d,"\sigma_{2,1}^1"]\arrow[rd,"\sigma_{3,1}^1"]\arrow[r,dashed] &{\blue \Big(\beta=(\mu_1-1, \mu_2+1, \mu_3)\Big) }\arrow[d,dashed] \\
&{\red (\mu_1, \mu_2-1, \mu_3+1)^-}\arrow[r,"\sigma_{3,2}^1"]&{\red\alpha=(\mu_1, \mu_2, \mu_3)^+}
\end{tikzcd}
$$
 \end{center}
  \caption{\label{fig:diagram II} The diagram of  $\cup_{\alpha\in \mathcal{C}_\mu} \mathcal{K}_\alpha$ when $\mu_1=\mu_2+1$ and $\mu_2\ne\mu_3+1$. }
\end{figure}

\medskip

{\bf Case III}\quad  If $\mu_1\not=\mu_2+1$ and $\mu_2=\mu_3+1$, then $\mathcal{C}_\mu=
\{\alpha=(\mu_1, \mu_2, \mu_3), \gamma=(\mu_1, \mu_3, \mu_2) \}$.  It is easy to show that $\mathcal{K}_\alpha\cap \mathcal{K}_\gamma=\varnothing$ and the diagram of $\mathcal{K}_\alpha\cup \mathcal{K}_\gamma$ is given in Figure~\ref{fig:diagram III}. The elements of $\mathcal{K}_\alpha$ are colored red and the ones in $\mathcal{K}_\gamma$ are colored in blue.   Note that $\gamma$ itself is not an element of $\mathcal{K}_\alpha\cup \mathcal{K}_\gamma$ since it is not a partition.

\begin{figure}[ht]
\begin{center}
$${\red \mathcal{K}_\alpha}\cup {\blue \mathcal{K}_\gamma}=\quad
\begin{tikzcd}[row sep=huge]
{\red(\mu_1-2, \mu_2+1, \mu_3+1)^+}\arrow[d,"\sigma_{2,1}^1"] \arrow[rd,"\sigma_{3,1}^1"] & \\
{\blue(\mu_1-1, \mu_2, \mu_3+1)^-}\arrow[r,"\sigma_{3,2}^1"] \arrow[d, dashed]\arrow[rd,"\sigma_{3,1}^1"]  &{\red(\mu_1-1, \mu_2+1, \mu_3)^-}\arrow[d,"\sigma_{2,1}^1"]\\
{\blue \Big(\gamma=(\mu_1, \mu_2-1, \mu_3+1)\Big) }\arrow[r,dashed]&{\red\alpha=(\mu_1, \mu_2, \mu_3)^+}
\end{tikzcd}
$$
 \end{center}
  \caption{\label{fig:diagram III} The diagram of  $\cup_{\alpha\in \mathcal{C}_\mu} \mathcal{K}_\alpha$ when $\mu_1\ne\mu_2+1$ and $\mu_2=\mu_3+1$. }
\end{figure}

\medskip

{\bf Case IV}\quad  If $\mu_1=\mu_2+1$ and $\mu_2=\mu_3+1$, then $\mathcal{C}_\mu=
\{\alpha=(\mu_1, \mu_2, \mu_3), \beta=(\mu_2, \mu_1, \mu_3), \gamma=(\mu_1, \mu_3, \mu_2)  \}$. In this case, $\mathcal{K}_\beta = \mathcal{K}_\gamma=\{(\mu_1-1, \mu_2, \mu_3+1)^-\}$ and $\mathcal{K}_\alpha=\{\alpha\}$. Hence we draw a diagram for $\mathcal{K}_\alpha\cup \mathcal{K}_\beta \cup \mathcal{K}_\gamma$ as in Figure~\ref{fig:diagram IV} where $2(\mu_1-1, \mu_2, \mu_3+1)^-$ means $(\mu_1-1, \mu_2, \mu_3+1)^-$ appears in both $\mathcal{K}_\beta$ and $\mathcal{K}_\gamma$ with negative signs.   In this case, neither $\beta$ nor $\gamma$ is contained in $\mathcal{K}_\alpha\cup \mathcal{K}_\beta \cup \mathcal{K}_\gamma$ since they are not partitions.

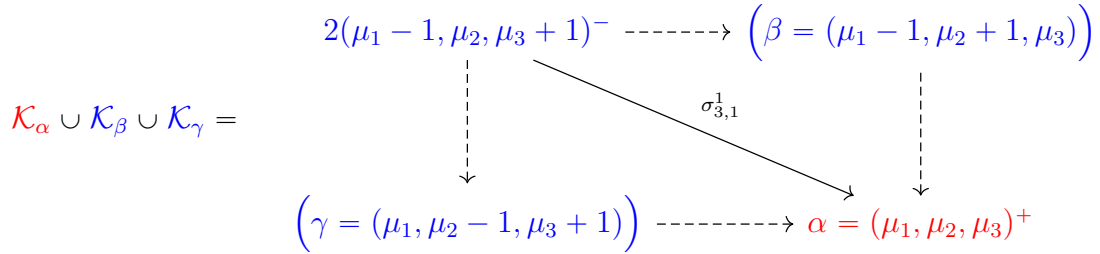
\begin{figure}[ht]
\begin{center}
$${\red \mathcal{K}_\alpha}\cup {\blue \mathcal{K}_\beta} \cup {\blue \mathcal{K}_\gamma}=\quad
\begin{tikzcd}[row sep=huge]
{\blue 2(\mu_1-1, \mu_2, \mu_3+1)^-}\arrow[r,dashed]\arrow[d,dashed]\arrow[rd,"\sigma_{3,1}^1"] &{\blue \Big(\beta=(\mu_1-1, \mu_2+1, \mu_3)\Big) }\arrow[d,dashed] \\
{\blue \Big(\gamma=(\mu_1, \mu_2-1, \mu_3+1)\Big) }\arrow[r,dashed] &{\red\alpha=(\mu_1, \mu_2, \mu_3)^+}
\end{tikzcd}
$$
 \end{center}
  \caption{\label{fig:diagram IV} The diagram of  $\cup_{\alpha\in \mathcal{C}_\mu} \mathcal{K}_\alpha$ when $\mu_1=\mu_2+1$ and $\mu_2=\mu_3+1$. }
\end{figure}

\section{Proof of $h$-positivity when $b(f)= 3$}\label{sec:h-positivity}

In this section we prove Theorem~\ref{thm:main}, that is the Stanley-Stembridge conjecture is true when $f$ has bounce number  $3$. We cancel the negative terms in \eqref{eq:coefficient} with positive terms so that the remaining terms have positive signs.

Since
\begin{equation}
 \omega X_{G(f)}(\mathbf{x})=\sum_{\mu\in \mathrm{Par}(n, \leq 3)} \left( \sum_{\alpha\in\mathcal{C}_\mu }\,\,\sum_{\la \in \mathcal{K}_\alpha}  \mathrm{sgn}(\alpha, \la)\,d_\la(f) \right)  h_\mu= \sum_{\mu\in \mathrm{Par}(n, \leq 3)} c_\mu\, h_\mu
\end{equation}
and we classified $\mathcal{C}_\mu$ and $ \cup_{\alpha\in \mathcal{C}_\mu} \mathcal{K}_\alpha$ depending on the given $\mu$ in Section~\ref{sec:bounce3}, 
we consider each of the four cases I - IV separately and show that every negative term in
\begin{equation}\label{eq:c_mu}
c_{\mu}=\sum_{\alpha\in\mathcal{C}_\mu }\sum_{\la \in \mathcal{K}_\alpha}  \mathrm{sgn}(\alpha, \la)\,d_\la(f)\end{equation}
can be canceled with a positive term.

Remember that $d_\la(f)=|\mathcal{T}_\la (f)|$ is the number of $f$-tableaux of shape $\la$. We fix a positive integer $n$ and a Hessenberg function $f: [n] \rightarrow [n]$ with bounce number $b(f)=3$,  and we use $\mathcal{T}(\la)$, $d(\la)$, $P$ and $P_l$, $l=1, 2, 3$, instead of $\mathcal{T}_\la (f)$, $d_\la(f)$, $P(f)$ and $P_l(f)$, respectively for convenience. We also use $a\prec b$ instead of $a\prec_f b$ for the order relation in $P=P_1\cup P_2\cup P_3$.

The basic idea is to use the relation $\xi \stackrel{\sigma_{j, i}^k}{\longrightarrow} \eta$ in the diagram of $\cup_{\alpha\in \mathcal{C}_\mu} \mathcal{K}_\alpha$ to define an injective map from each negative $\mathcal{T}(\xi)$ to a positive $\mathcal{T}(\eta)$, where the sign of $\mathcal{T}(\lambda)$ for $\lambda\in \cup_{\alpha\in \mathcal{C}_\mu} \mathcal{K}_\alpha$ is the sign $\mathrm{sgn}(\alpha, \la)$ of $\lambda$ for a corresponding $\alpha\in \mathcal{C}_\mu$.

\begin{definition} Let $\xi=(\xi_1, \xi_2, \xi_3)$ be a partition and $T$ be a tableau of shape $\xi$. For given $i, j$ such that  $1\leq i<j\leq 3$,
\begin{enumerate}
\item if $\xi_j\geq 1$, we define $\sigma_{j\rightarrow i}^\square (T)$ to be the tableau  obtained by moving the rightmost entry of the $j$th row of $T$ to the end of the $i$th row.
\item if $\xi_j\geq 2$, we define $\sigma_{j\rightarrow i}^{\square\!\square} (T)$ to be the tableau obtained by moving two rightmost entries of the $j$th row of $T$ to the end of the $i$th row.
\end{enumerate}
\end{definition}

We need to mention that $\sigma_{j\rightarrow i}^*$, $*\in\{\square,  \square\!\square\}$, does not necessarily send an $f$-tableau to an $f$-tableau, but $\sigma_{3\rightarrow 1}^*$ always does.

 \begin{lemma}\label{lem:3to1} Let $\xi=(\xi_1, \xi_2, \xi_3)$ be a partition and  $T$ be an $f$-tableau in $\mathcal{T}(\xi)$.
 \begin{enumerate}
     \item If $\xi_3\geq 1$ then $\sigma_{3\rightarrow 1}^\square (T)$ is an $f$-tableau in  $\mathcal{T}(\xi_1+1, \xi_2, \xi_3-1)$. Hence,
 $\sigma_{3\rightarrow 1}^\square$ is a map from $\mathcal{T}(\xi_1, \xi_2, \xi_3)$ to $\mathcal{T}(\xi_1+1, \xi_2, \xi_3-1)$ and it is injective.
 \item If $\xi_3\geq 2$ then $\sigma_{3\rightarrow 1}^{\square\!\square} (T)$ is an $f$-tableau in  $\mathcal{T}(\xi_1+2, \xi_2, \xi_3-2)$. Hence,
 $\sigma_{3\rightarrow 1}^{\square\!\square}$ is a map from $\mathcal{T}(\xi_1, \xi_2, \xi_3)$ to $\mathcal{T}(\xi_1+2, \xi_2, \xi_3-2)$ and it is injective.
 \end{enumerate}
 \end{lemma}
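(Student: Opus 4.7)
The plan is to verify directly that, for each choice of $\ast\in\{\square,\square\!\square\}$, the operation $\sigma_{3\rightarrow 1}^\ast$ preserves the two defining conditions of an $f$-tableau when applied to $T\in\mathcal{T}(\xi)$, and then to observe that the map is invertible on its image. Injectivity is then a free consequence: since the target shape pins down precisely which entries of $\sigma_{3\rightarrow 1}^\ast(T)$ were freshly appended to row $1$, one can always read them back off and restore them to the end of row $3$ of the original shape $\xi$.

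The substantive work lies in preserving the $f$-tableau axioms. For $\sigma_{3\rightarrow 1}^\square$, set $c=T(3,\xi_3)$ and examine the new filling of shape $(\xi_1+1,\xi_2,\xi_3-1)$. Column strictness is painless: the old column $\xi_3$ loses only its bottom entry, leaving the chain $T(1,\xi_3)\prec T(2,\xi_3)$ intact, while the new column $\xi_1+1$ holds only $c$, since the partition inequalities $\xi_2\leq\xi_1$ and $\xi_3-1\leq\xi_1$ prevent rows $2$ and $3$ from reaching that column. The row condition inside row $3$ is unaffected, so the only new row adjacency to check is the one between $T(1,\xi_1)$ and $c$ in row $1$, which amounts to $c\nprec T(1,\xi_1)$.

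This last inequality is the crux of the argument and is where the hypothesis $b(f)=3$ enters. Column strictness in $T$ gives the chain $T(1,\xi_3)\prec T(2,\xi_3)\prec c$ of length three; if additionally $c\prec T(1,\xi_1)$ were to hold, then $T(1,\xi_3)\prec T(2,\xi_3)\prec c\prec T(1,\xi_1)$ would be a chain of length four in $P(f)$, contradicting $b(f)=3$ via Lemma~\ref{lem:basic}(4). For $\sigma_{3\rightarrow 1}^{\square\!\square}$ the argument is parallel: writing $c'=T(3,\xi_3-1)$ and $c=T(3,\xi_3)$, the row-$1$ adjacency between $T(1,\xi_1)$ and $c'$ is handled by the same chain argument applied in column $\xi_3-1$, while the adjacency between $c'$ and $c$ in the new row $1$ is simply inherited from their old adjacency in row $3$ of $T$, and column strictness reduces to the same cosmetic check as before.

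I do not expect any serious obstacle. The whole argument is driven by the bound $b(f)=3$ forbidding a length-four chain of the form \emph{``column chain of length $3$ followed by one more comparability.''} The remaining verifications are combinatorial bookkeeping that uses only the partition inequalities $\xi_1\geq\xi_2\geq\xi_3$; the only place one needs to pause is to be sure that no column of the new tableau accidentally acquires an extra entry from a row that did not previously reach that column, but this is ruled out immediately by these inequalities.
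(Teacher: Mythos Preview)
Your proof is correct and follows essentially the same approach as the paper. The paper phrases the key step by noting that every entry of the third row lies in $P_3$ (by Lemma~\ref{lem:basic}(4)) and that no element can be $\succ$ an element of $P_3$; your length-four-chain argument is just the contrapositive of this same observation, so the two proofs coincide in substance.
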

 \begin{proof} For  $T\in\mathcal{T}(\xi_1, \xi_2, \xi_3)$, the entries in the third row of $T$ are in $P_3$ and  if $a\in P_3$ then  $x\nsucc a$ for any $x$. Hence $\sigma_{3\rightarrow 1}^\square (T)$ must be an $f$-tableau in $\mathcal{T}(\xi_1+1, \xi_2, \xi_3-1)$. The injectivity of $\sigma_{3\rightarrow 1}^\square$ is immediate from the definition. Almost the same argument works for the second part.
 \end{proof}

\begin{example}\label{ex:23567888}
 Let $f : [8]\rightarrow [8]$ be a Hessenberg function given by $$(f(1), f(2), f(3), f(4), f(5), f(6), f(7), f(8))=(2, 3, 5, 6, 7, 8, 8, 8)$$ so that the bounce number $b(f)$ of $f$ is $3$ and $P_1=\{1, 2\}, P_2=\{3, 4, 5\}, P_3=\{6, 7, 8\}$. If we let $T$ be an $f$-tableau of shape $\xi=(4, 2, 2)$ given as  $$T=\tableau{2&1&5&6\\4&3\\8&7}\,, $$ then  $$\sigma_{3\rightarrow 1}^\square (T)= \tableau{2&1&5&6&{\red 7}\\4&3\\8}\,\,\in\mathcal{T}(5, 2, 1)$$ and $$\sigma_{3\rightarrow 1}^{\square\!\square} (T)= \tableau{2&1&5&6&\red{8}&\red{7}\\4&3}\,\,\in\mathcal{T}(6, 2, 0)\,.$$
We note that, since $7$ and $8$ are elements of $P_3$, $6\nsucc 7$ and $6\nsucc 8$. On the contrary, $$\sigma_{3\rightarrow 2}^\square (T)= \tableau{2&1&5&6\\4&3&{\red 7}\\8}\,\,\not\in\mathcal{T}(4, 3, 1)\,,$$
since $5\nprec 7$.
\end{example}

 \begin{lemma}\label{lem:RS} For a Hessenberg function $f: [n] \rightarrow [n]$, let $R$ be a partial $f$-tableau of shape $(m, m, m)$ with content $A\subseteq [n]$ and $S$ be a partial $f$-tableau of shape $\la=(\la_1, \la_2, \la_3)$ with content $[n]-A$. Then the tableau $T:=R\cup S$ of shape $(m+\la_1, m+\la_2, m+\la_3)$ obtained by concatenating $R$ and $S$ so that the first $m$ columns of $T$ is $R$ and the rest is the same as $S$, is an $f$-tableau.
\end{lemma}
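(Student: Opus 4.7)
My plan is to check separately the column condition and the row condition for $T = R \cup S$. The column condition is immediate, because every column of $T$ is either one of the first $m$ columns (a column of $R$) or one of the last $\lambda_1$ columns (a column of $S$), and both $R$ and $S$ already satisfy it. The row condition similarly holds for every pair of horizontally adjacent cells that lies entirely inside $R$ or entirely inside $S$. Hence the whole content of the lemma is the boundary row condition: for each row $i$ with $\lambda_i \geq 1$, writing $r_i$ for the entry of $T$ at position $(i, m)$ and $s_i$ for the entry at position $(i, m+1)$, I need $r_i \nsucc_f s_i$.

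The key step is to pin down, via Lemma~\ref{lem:basic}(4), which bounce block $P_l$ each of these entries lives in. The $m$-th column of $R$ is a chain $r_1 \prec_f r_2 \prec_f r_3$ of length $b(f) = 3$, so the ``moreover'' clause of Lemma~\ref{lem:basic}(4) forces $r_i \in P_i$ for $i = 1, 2, 3$. Similarly, the first column of $S$ is a chain $s_1 \prec_f \cdots \prec_f s_k$ with $k = \ell(\lambda)$ (and since $\lambda$ is a partition, these are exactly the rows where $\lambda_i \geq 1$), and Lemma~\ref{lem:basic}(4) yields $s_j \in P_{l_j}$ for an increasing sequence $1 \leq l_1 < \cdots < l_k \leq 3$; in particular $l_i \geq i$ for every relevant $i$.

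The boundary check then splits into two clean subcases. If $l_i = i$, both $r_i$ and $s_i$ lie in $P_i$, so by Lemma~\ref{lem:basic}(3) they are incomparable under $\prec_f$, and in particular $r_i \nsucc_f s_i$. If $l_i > i$, then $s_i > x_{l_i - 1} \geq x_i \geq r_i$, where the last inequality uses $r_i \in P_i$; so $s_i > r_i$, and since $r_i \succ_f s_i$ would force $r_i > s_i$, we again conclude $r_i \nsucc_f s_i$. This covers every boundary adjacency and completes the verification that $T$ is an $f$-tableau.

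I do not foresee a genuine obstacle: the proof is essentially one invocation each of Lemma~\ref{lem:basic}(3) and (4) applied to the two extreme columns touching the seam between $R$ and $S$. The only point worth a sentence of care in the write-up is that the concatenation is well-defined, i.e., that the entries of $R$ and $S$ are understood as disjoint subsets of $[n]$ whose union is $[n]$, so that $T$ is an honest $f$-tableau of the claimed shape.
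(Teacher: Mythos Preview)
Your proof is correct and follows essentially the same approach as the paper's: reduce to the boundary row condition at the seam, use Lemma~\ref{lem:basic}(4) to place $r_i \in P_i$ and $s_i \in P_{l_i}$ with $l_i \geq i$, and conclude $r_i \nsucc_f s_i$. The paper compresses your two subcases ($l_i = i$ versus $l_i > i$) into the single observation that $r_l \in P_l$ and $s_l \in P_{l'}$ with $l' \geq l$ already force $r_l \nsucc s_l$, but the content is identical.
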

\begin{proof}
We only need to check the conditions between the $m$th column and the $(m+1)$th column of $T$; that is, $r_l\nsucc s_l$ for $l=1, 2, 3$, where $r_l, s_l$ are the $l$th entry of the last column of $R$ and the first column of $S$, respectively. We remark that $S$ can be a tableau with one or two rows, in which case we need to check only one or two relations.  Since Lemma~\ref{lem:basic} (4) implies that $r_l\in P_l$ for $l=1, 2, 3,$ and  $s_l$ is in $P_{l'}$ for $l'\geq l$, we can conclude that  $r_l\nsucc s_l$ for $l=1, 2, 3$.
\end{proof}
In the rest of this section, we prove that $c_{\mu}=\sum_{\alpha\in\mathcal{C}_\mu }\sum_{\la \in \mathcal{K}_\alpha}  \mathrm{sgn}(\alpha, \la)d (\lambda)$ is nonnegative for any partition $\mu$ by injectively mapping the elements of negative $\mathcal{T}(\xi)$ into positive  $\mathcal{T}(\eta)$ for $\xi, \eta\in \cup_{\alpha\in \mathcal{C}_\mu} \mathcal{K}_\alpha$.
We consider four separate cases as we did in Section~\ref{sec:bounce3} 
depending on the conditions that a given partition $\mu=(\mu_1, \mu_2, \mu_3)$ satisfy.
We use $A\simeq B$ when two sets $A$ and $B$ are in bijection.

\medskip

\subsection{\textbf{Case I}}


\quad  Assume that $\mu_1\not=\mu_2+1$ and $\mu_2\not=\mu_3+1$. Then from the diagram, Figure~\ref{fig:diagram I}, of $\mathcal{K}_\alpha$, we can see that we need to define an injective map
$$\mbox{ from } \mathcal{T}(\mu_1-2, \mu_2, \mu_3+2)^-\cup\mathcal{T}(\mu_1-1, \mu_2+1, \mu_3)^-\cup  \mathcal{T}(\mu_1, \mu_2-1, \mu_3+1)^-$$
$$\mbox{ into } \mathcal{T}(\mu_1-2, \mu_2+1, \mu_3+1)^+\cup \mathcal{T}(\mu_1-1, \mu_2-1, \mu_3+2)^+\cup \mathcal{T}(\mu_1, \mu_2, \mu_3)^+ \,.$$

We first use $\sigma_{3\rightarrow 1}^*$'s that were shown to be injective in Lemma~\ref{lem:3to1}:

\begin{figure}[ht]
\begin{center}
$$
\begin{tikzcd}[row sep=huge, column sep=tiny]
\mathcal{T}(\mu_1-2, \mu_2, \mu_3+2)^- \arrow[rrdd,"\sigma_{3\rightarrow 1}^{\square\!\square} ", hook] & \mathcal{T}(\mu_1-2, \mu_2+1, \mu_3+1)^+ \arrow[rd,"\sigma_{3\rightarrow 1}^\square", hook] & \\
\mathcal{T}(\mu_1-1, \mu_2-1, \mu_3+2)^+\arrow[rd,"\sigma_{3\rightarrow 1}^\square", hook] &&\mathcal{T}(\mu_1-1, \mu_2+1, \mu_3)^-\arrow[d, dashed]\\
& \mathcal{T}(\mu_1, \mu_2-1, \mu_3+1)^-\arrow[r,dashed]&{\mathcal{T}(\mu_1, \mu_2, \mu_3)^+}
\end{tikzcd}
$$
 \end{center}
  \caption{\label{fig:CaseI} Case I. }
\end{figure}

We have, as one can see in Figure ~\ref{fig:CaseI},
$$\mathcal{T}(\mu_1-2, \mu_2, \mu_3+2)^-\simeq \sigma_{3\rightarrow 1}^{\square\!\square}(\mathcal{T}(\mu_1-2, \mu_2, \mu_3+2)^-)\subseteq \mathcal{T}(\mu_1, \mu_2, \mu_3)^+ \,, $$
$$\mathcal{T}(\mu_1-1, \mu_2-1, \mu_3+2)^+ \simeq  \sigma_{3\rightarrow 1}^\square(\mathcal{T}(\mu_1-1, \mu_2-1, \mu_3+2)^+)\subseteq \mathcal{T}(\mu_1, \mu_2-1, \mu_3+1)^- \,,\,\, \mbox{  and}$$
$$\mathcal{T}(\mu_1-2, \mu_2+1, \mu_3+1)^+ \simeq  \sigma_{3\rightarrow 1}^\square(\mathcal{T}(\mu_1-2, \mu_2+1, \mu_3+1)^+)\subseteq \mathcal{T}(\mu_1-1, \mu_2+1, \mu_3)^-\,.$$

Therefore, if we let
\begin{align*}
    \widetilde{\mathcal{T}}(\mu_1, \mu_2, \mu_3)^+&= \mathcal{T}(\mu_1, \mu_2, \mu_3)^+ -  \sigma_{3\rightarrow 1}^{\square\!\square}(\mathcal{T}(\mu_1-2, \mu_2, \mu_3+2)^-)  \,,\\
\widetilde{\mathcal{T}}(\mu_1, \mu_2-1, \mu_3+1)^-&= \mathcal{T}(\mu_1, \mu_2-1, \mu_3+1)^- -\sigma_{3\rightarrow 1}^\square(\mathcal{T}(\mu_1-1, \mu_2-1, \mu_3+2)^+)\,, \\
\widetilde{\mathcal{T}}(\mu_1-1, \mu_2+1, \mu_3)^-&=\mathcal{T}(\mu_1-1, \mu_2+1, \mu_3)^- - \sigma_{3\rightarrow 1}^\square(\mathcal{T}(\mu_1-2, \mu_2+1, \mu_3+1)^+)\,,\end{align*}
we are left to define an injection
$$\phi : \widetilde{\mathcal{T}}(\mu_1-1, \mu_2+1, \mu_3)^-\cup  \widetilde{\mathcal{T}}(\mu_1, \mu_2-1, \mu_3+1)^- \longrightarrow \widetilde{\mathcal{T}}(\mu_1, \mu_2, \mu_3)^+ \,.$$

Note that if $\mu_1=\mu_2$ then  $\widetilde{\mathcal{T}}(\mu_1-1, \mu_2+1, \mu_3)^-=\varnothing$, and if $\mu_2=\mu_3$ then $\widetilde{\mathcal{T}}(\mu_1, \mu_2-1, \mu_3+1)^-=\varnothing$, and these cases will be covered as special cases of the case $\mu_1> \mu_2+1$ and $\mu_2>\mu_3+1$, respectively. We thus assume that $\mu_1> \mu_2+1$ and $\mu_2>\mu_3+1$.
A tableau $T$ in $ \widetilde{\mathcal{T}}(\mu_1-1, \mu_2+1, \mu_3)^-$ can be written as $T=R\cup S$ where $R\in \mathcal{T}(\mu_3, \mu_3, \mu_3)$ and $S \in \mathcal{T}(\mu_1-\mu_3-1, \mu_2-\mu_3+1, 0)$; see Lemma~\ref{lem:RS}. Since $\mu_1> \mu_2+1>\mu_3+2$  and we will manipulate $S$ only to define the image $\phi(T)$ of $T$, we may assume that $(\mu_1-1, \mu_2+1, \mu_3)=(m+k, m, 0)$ for $m\geq 3$ and $k\geq 0$, and therefore $(\mu_1, \mu_2-1, \mu_3+1)=(m+k+1, m-2, 1)$ for $m\geq 3$ and $k\geq 0$.
We remark that this process works as required as long as $\phi(S)$ is an $f$-tableau due to Lemma~\ref{lem:RS}.

\begin{proof}[Proof of Theorem~\ref{thm:main} in \textbf{Case I}]
In what follows, to prove Theorem~\ref{thm:main}, we will do the following steps. See Figure~\ref{fig:CaseI_reduce}.
\begin{enumerate}
\item We modify ${\sigma}_{2\rightarrow 1}^\square$ and ${\sigma}_{3\rightarrow 2}^\square$ to define injections
$$\widetilde{\sigma}_{2\rightarrow 1}^\square: \widetilde{\mathcal{T}}(m+k, m, 0)^- \rightarrow \widetilde{\mathcal{T}}(m+k+1, m-1, 0)^+\,, \text{ and }$$
$$\widetilde{\sigma}_{3\rightarrow 2}^\square: \widetilde{\mathcal{T}}(m+k+1, m-2, 1)^- \rightarrow \widetilde{\mathcal{T}}(m+k+1, m-1, 0)^+\,. $$
\item We then modify $\widetilde{\sigma}_{3\rightarrow 2}^\square$ to define $\phi_2: \widetilde{\mathcal{T}}(m+k+1, m-2, 1)^- \rightarrow \widetilde{\mathcal{T}}(m+k+1, m-1, 0)^+ $ so that the map $\phi : \widetilde{\mathcal{T}}(m+k, m, 0)^- \cup  \widetilde{\mathcal{T}}(m+k+1, m-2, 1)^- \longrightarrow \widetilde{\mathcal{T}}(m+k+1, m-1, 0)^+ $ defined by  $\phi|_{\widetilde{\mathcal{T}}(m+k, m, 0)^-}=\widetilde{\sigma}_{2\rightarrow 1}^\square:=\phi_1$ and $\phi|_{\widetilde{\mathcal{T}}(m+k+1, m-2, 1)^-}=\phi_2$ is an injection; that is, $\phi_1(\widetilde{\mathcal{T}}(m+k, m, 0)^-)\cap\phi_2(\widetilde{\mathcal{T}}(m+k+1, m-2, 1)^-)  =\varnothing\,. $
\end{enumerate}

In this subsection we will only give the definition of the maps $\widetilde{\sigma}_{2\rightarrow 1}^\square$, $\widetilde{\sigma}_{3\rightarrow 2}^\square$, and $\phi_2$ in Definition \ref{def:sigma21}, Definition \ref{def:sigma32} and Definition \ref{def:phi2}, respectively, and the proofs that they satisfy the desired properties will be done later in a new section. The proofs of Lemmas \ref{lem:sigma21} through \ref{lem:disjoint from T1} and Proposition \ref{prop:phi2}, that are rather technical will be done in Section~\ref{sec:proofs}.

\begin{figure}[ht]
\begin{center}
$$
\begin{tikzcd}[row sep=huge, column sep=huge]
\qquad\qquad & \widetilde{\mathcal{T}}(m+k, m, 0)^- \arrow[d,"\phi_1=\widetilde{\sigma}_{2\rightarrow 1}^\square", hook]\\
 \widetilde{\mathcal{T}}(m+k+1, m-2, 1)^- \arrow[r,"\phi_2={(\widetilde{\sigma}_{3\rightarrow 2}^\square)^*}", hook]&  \widetilde{\mathcal{T}}(m+k+1, m-1, 0)^+
\end{tikzcd}
$$
 \end{center}
  \caption{\label{fig:CaseI_reduce} Reduced Case I. }
\end{figure}

\medskip

\begin{definition}\label{def:sigma21}
Let $m\geq 3$ and $k\geq 0$ be integers. Then we define

$$\widetilde{\sigma}_{2\rightarrow 1}^\square : \widetilde{\mathcal{T}}(m+k, m, 0)^- \rightarrow \widetilde{\mathcal{T}}(m+k+1, m-1, 0)^+ $$  as follows. Let

$$T=\scriptsize{\ttableau{ a_1 & b_1^{(m-3)} & \cdots &b_1^{(2)}& b_1^{(1)} &b_1& c_1&   d_1^{(k-1)}&\cdots& d_1^{(2)}&d_1^{(1)}&d_1\\
                                            a_2& b_2^{(m-3)} &  \cdots &b_2^{(2)} & b_2^{(1)} & b_2& c_2} } $$

be an $f$-tableau in $ \widetilde{\mathcal{T}}(m+k, m, 0)^- \,.$ Then $a_2 \not\prec d_1$ since $T\not\in\sigma_{3\rightarrow 1}^\square(\mathcal{T}(m+k-1, m, 1)^+)$.

\begin{enumerate}
\item[$\langle 1\rangle$] When $d_1\nsucc c_2$, move $c_2$ to the end of the first row;
$$\widetilde{\sigma}_{2\rightarrow 1}^\square(T):=
\scriptsize{\ttableau{a_1& b_1^{(m-3)} & \cdots &b_1^{(2)}& b_1^{(1)} &b_1& c_1& d_1^{(k-1)}&\cdots& d_1^{(2)}&d_1^{(1)}&d_1& \red{c_2}\\
                                            a_2& b_2^{(m-3)} &  \cdots &b_2^{(2)} & b_2^{(1)} & b_2}}\,. $$
\item [$\langle 2\rangle$]  When $d_1\succ c_2$, we consider two sequences of entries  from $T$:
$$d_1^{(0)}:=d_1, d_1^{(1)}, \,d_1^{(2)}, \,\cdots, \,d_1^{(k-1)}\,, d_1^{(k)}:=c_1 \mbox{ of length $k$, in the first row and }$$
$$b_2^{(-1)}:=c_2, b_2^{(0)}:=b_2,\, b_2^{(1)},\, \cdots, \,b_2^{(m-2)}:=a_2\, \mbox{ of length $m-1$ in the second row: }$$

\ytableausetup{mathmode, boxsize=2.7em}
$$T={\SMALL \begin{ytableau}
  a_1 &  b_1^{(m-3)} & \cdots &b_1^{(2)}& b_1^{(1)} &b_1& *(green) c_1& *(green) d_1^{(k-1)}&*(green)\cdots& *(green)d_1^{(2)}&*(green)d_1^{(1)}&*(green)d_1\\
*(orange) a_2& *(orange) b_2^{(m-3)} &  *(orange) \cdots &*(orange)b_2^{(2)} & *(orange)b_2^{(1)} & *(orange) b_2& *(orange) c_2
\end{ytableau}}$$

 \begin{enumerate}

\item[$\langle 2$-$i \rangle$] If there is  $0 \leq i\leq \min(k-1, m-2)$ such that $d_1^{(j+1)}\succ b_2^{(j)}$ for all $j=-1, 0, \dots, i-1$ but $d_1^{(i+1)}\nsucc b_2^{(i)}$,  then we exchange $(i+1)$ entries in the tail of the first row and $(i+2)$ entries in the tail of the second row of $T$ to obtain

\ytableausetup{mathmode, boxsize=2.7em}
$$\widetilde{\sigma}_{2\rightarrow 1}^\square(T):=
 {\SMALL\begin{ytableau}
a_1 & \cdots & b_1^{(i+1)} & b_1^{(i)} & \cdots & b_1^{(1)} & b_1 & *(green) c_1 & *(green)\cdots & *(green) d_1^{(i+1)} & *(orange){b_2^{(i)}} & *(orange){ \cdots} & *(orange){b_2^{(0)}} &  *(orange){c_2}\\
*(orange)a_2 & *(orange)\cdots & *(orange)b_2^{(i+1)} & *(green){d_1^{(i)}} & *(green){\cdots} &  *(green){d_1^{(1)}}& *(green){d_1}
\end{ytableau}}
\,.$$

\item[$\langle 2$-$\infty \rangle$]
Otherwise, in which case we have $m-2< k-1$ and $d_1^{(j+1)} \succ b_2^{(j)}$ for all $j=0,..., m-2 $, we let
$$\widetilde{\sigma}_{2\rightarrow 1}^\square(T):=
 {\SMALL\begin{ytableau} a_1&\cdots&b_1^{(2)}&b_1^{(1)}&b_1& *(green)c_1& *(green)\cdots &*(green) d_1^{(m)}& *(yellow)\blue{d_1}&*(orange){a_2}&*(orange){b_2^{(m-3)}}&*(orange){ \cdots} &*(orange){b_2^{(1)}}& *(orange){b_2}&*(orange){c_2}\\
*(green){d_1^{(m-2)}}&*(green){\cdots} & *(green){d_1^{(2)}} &  *(green){d_1^{(1)}}& *(yellow)\blue{d_1^{(m-1)}}
\end{ytableau}}\,. $$
\end{enumerate}

\end{enumerate}
\end{definition}

\begin{remark} In Definition~\ref{def:sigma21} $\langle 2\rangle$, if $m-2\geq k-1$ then $\langle 2$-$i\rangle$ {\rm (}$0 \leq i \leq  k-1${\rm )} is always the case. That is because $c_1\in P_1 \cup P_2$ and $b_2^{(j)}\in P_2\cup P_3$ for $j= 0, \dots, m-2$  due to Lemma~\ref{lem:basic}, and hence we have $c_1=d_1^{(k)}\nsucc b_2^{(j)}$ for all $j$.
\end{remark}

\begin{remark} \label{rmk:T 2 infty} If $T$ is of type $\langle 2$-$\infty\rangle$, then $c_2 \prec d_1^{(m-1)}$.
  For, from $a_2 \not \prec d_1$  that is because $T\not\in \sigma_{3\rightarrow 1}^\square(\mathcal{T}(m+k-1, m, 1)^+) $,  and $a_2=b_1^{(m-2)} \prec d_1^{(m-1)}$  it follows that $d_1 < d_1^{(m-1)}$. Since $c_2 \prec d_1$, we have $c_2 \prec d_1^{(m-1)}$.
\end{remark}

\begin{definition}\label{def:sigma32} Let $m\geq 3$ and $k\geq 0$ be integers. Then we define

$$\widetilde{\sigma}_{3\rightarrow 2}^\square : \widetilde{\mathcal{T}}(m+k+1, m-2, 1)^- \rightarrow \widetilde{\mathcal{T}}(m+k+1, m-1, 0)^+ $$ as follows. Let

 $$S=\scriptsize{ \ttableau{a_1& b_1^{(m-3)}& \cdots&b_1^{(2)}&b_1^{(1)}& b_1&c_1& d_1^{(k-1)}& \cdots&d_1^{(2)}& d_1^{(1)}&d_1& e_1\\
                                            a_2& b_2^{(m-3)}& \cdots&b_2^{(2)}& b_2^{(1)}\\
                                            a_3}}$$
be an $f$-tableau in $\widetilde{\mathcal{T}}(m+k+1, m-2, 1)^- \,.$ Then, since $S\not\in \sigma_{3\rightarrow 1}^\square(\mathcal{T}(m+k-2, m, 2)^+)$ we have $b_2^{(m-3)} \not\prec e_1$. \\

\begin{enumerate}
\item[(1)] When $b_1 \prec a_3$, we let
         $$\widetilde{\sigma}_{3\rightarrow 2}^\square(S):=
         \scriptsize{ \ttableau{a_1& b_1^{(m-3)}& \cdots&b_1^{(2)}&b_1^{(1)}& b_1&c_1& d_1^{(k-1)}& \cdots&d_1^{(2)}& d_1^{(1)}&d_1& e_1\\
                                            a_2& b_2^{(m-3)}& \cdots&b_2^{(2)}& b_2^{(1)}&\red{a_3}
                                            }}\,.$$

\item[(2)] When $b_1 \not\prec a_3$ and $b_1 \not \succ b_2^{(m-3)}$, we let
         $$\widetilde{\sigma}_{3\rightarrow 2}^\square(S):=
         \scriptsize{ \ttableau{a_1& b_1^{(m-3)}& \cdots&b_1^{(2)}&b_1^{(1)}&\red{a_2}& c_1&d_1^{(k-1)}& \cdots& d_1^{(2)}& d_1^{(1)}&d_1& e_1\\
                                            \red{b_1}&b_2^{(m-3)}&\cdots&b_2^{(2)} & b_2^{(1)}& \blue{ a_3} }}\,.$$

\item[(3)] When $b_1 \not\prec a_3$ and $b_1   \succ b_2^{(m-3)}$,
 \begin{enumerate}
 \item[(3-1)]if $e_1 \not\prec a_3$ or ($e_1 \prec a_3$ and $a_2 \prec d_1$), then we let
        $$\widetilde{\sigma}_{3\rightarrow 2}^\square(S):=
         \scriptsize{ \ttableau{a_1& b_1^{(m-3)}&\cdots&b_1^{(2)}&b_1^{(1)}&\red{a_2}& c_1& d_1^{(k-1)}&\cdots& d_1^{(2)}& d_1^{(1)}&d_1&\red{b_1}\\
                                            \red{e_1}&b_2^{(m-3)}&\cdots&b_2^{(2)} & b_2^{(1)}& \blue{a_3} }}\,.$$
 \item[(3-2)] if $e_1 \prec a_3$ and $a_2 \not \prec d_1$, then we consider two sequences of entries from $S$:
$$d_1^{(-1)}:=e_1,\, d_1^{(0)}:=d_1, \,d_1^{(1)}, \,d_1^{(2)}, \,\cdots, \,d_1^{(k-1)}\,, d_1^{(k)}:=c_1 \mbox{ of length $k+2$, and }$$
$$b_1^{(0)}:=b_1, \,b_1^{(1)},\, b_1^{(2)},\, \cdots, \,b_1^{(m-2)}:=a_1\, \mbox{ of length $m-1$. }$$
 $$S= {\SMALL\begin{ytableau} *(yellow) a_1& *(yellow)b_1^{(m-3)}& *(yellow)\cdots & *(yellow)b_1^{(2)}&*(yellow)b_1^{(1)}&*(yellow) b_1&*(pink)c_1& *(pink)d_1^{(k-1)}& *(pink)\cdots&*(pink)d_1^{(2)}& *(pink)d_1^{(1)}&*(pink)d_1& *(pink)e_1\\
                                            a_2& b_2^{(m-3)}& \cdots&b_2^{(2)}& b_2^{(1)}\\ a_3 \end{ytableau}}$$
Let $i$ be the smallest such that $d_1^{(i)}\nprec b_1^{(i+2)}$, then we exchange the tails of length $(i+2)$ of two sequences (in the tableau) and move $a_3$ to the second row to obtain
$$\!\!\!\!\!\!\!\!\!\!\widetilde{\sigma}_{3\rightarrow 2}^\square(S):=
         {\SMALL\begin{ytableau} *(yellow)a_1& *(yellow)\cdots & *(yellow)b_1^{(i+2)}&*(pink)\red{d_1^{(i)}}&*(pink)\red{\cdots}&*(pink)\red{d_1^{(0)}}&*(pink)\red{e_1}&*(pink) c_1&*(pink)\cdots &*(pink)d_1^{(i+1)}&*(yellow)\red{b_1^{(i+1)}}&*(yellow)\red\cdots& *(yellow)\red{b_1^{(1)}}&*(yellow)\red{b_1}\\
                                           a_2&\cdots& b_2^{(i+2)}&b_2^{(i+1)} &\cdots& b_2^{(1)}& \red{a_3} \end{ytableau}}\,.$$

\end{enumerate}
\end{enumerate}
\end{definition}

\begin{remark} We note that the smallest $i$ such that $d_1^{(i)}\nprec b_1^{(i+2)}$ exists in case (3-2) of Definition~\ref{def:sigma32}: Since $a_1\prec a_2 \prec a_3$ is a chain we know that $a_1\in P_1$ and this implies  $d_1^{(j)}\nprec b_1^{(m-2)}=a_1$ for all $-1\leq j \leq k$. Moreover, since $ b_1^{(m-3)}\prec b_2^{(m-3)}\prec b_1$ is a chain we know that $b_1\in P_3$, and this implies that $c_1$ is in $P_2\cup P_3$ because $b_1 \nsucc c_1 $. Now we can conclude that  $d_1^{(k)}=c_1\nprec b_1^{(j)}$ for all $0\leq j\leq m-2$.
\end{remark}

Proofs of the following lemmas are given in Section~\ref{sec:proofs}.

\begin{lemma} \label{lem:sigma21}
$\widetilde{\sigma}_{2\rightarrow 1}^\square : \widetilde{\mathcal{T}}(m+k, m, 0)^- \rightarrow \widetilde{\mathcal{T}}(m+k+1, m-1, 0)^+ $ is a well defined injective map.
\end{lemma}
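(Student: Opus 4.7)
The lemma requires three things: that $\widetilde{\sigma}_{2\to 1}^\square(T)$ is an $f$-tableau of shape $(m+k+1,m-1,0)$, that it avoids the removed image $\sigma_{3\to 1}^{\square\square}(\mathcal{T}(m+k-1,m-1,2))$ cut from the tilde codomain, and that the map is injective. My plan is to treat the cases $\langle 1\rangle,\langle 2\text{-}0\rangle,\langle 2\text{-}i\rangle,\langle 2\text{-}\infty\rangle$ separately, since each case's defining hypothesis supplies the exact order information making its construction legal.

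For the $f$-tableau conditions the pivotal observation is that case $\langle 2\rangle$'s hypothesis $d_1\succ c_2$ combines with $c_1\prec c_2$ to give the $3$-chain $c_1\prec c_2\prec d_1$, so by Lemma~\ref{lem:basic}(4) one must have $c_1\in P_1$, $c_2\in P_2$, $d_1\in P_3$. Inside case $\langle 2\text{-}i\rangle$ each hypothesis $d_1^{(j+1)}\succ b_2^{(j)}$ for $0\le j<i$ similarly produces a $3$-chain $b_1^{(j)}\prec b_2^{(j)}\prec d_1^{(j+1)}$, pinning down the $P_l$-classes of every entry involved in the cascade of swaps. From this, each new row-adjacency $x\nsucc y$ and each new column comparison $x\prec y$ in the rearranged tableau follows using Lemma~\ref{lem:basic}(2), (3), (5); the stopping inequality $d_1^{(i+1)}\nsucc b_2^{(i)}$ is precisely the row condition needed at the cut, and case $\langle 2\text{-}\infty\rangle$ is handled using Remark~\ref{rmk:T 2 infty} to certify the tail $\ldots,b_2^{(0)},c_2$ of the new first row.

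For the tilde condition, the image to avoid consists of tableaux whose last two row-$1$ entries lie in $P_3$ and form $3$-chains with columns $1$ and $2$. In every case the new final entry of row $1$ is $c_2$; in $\langle 2\text{-}0\rangle,\langle 2\text{-}i\rangle,\langle 2\text{-}\infty\rangle$ we already identified $c_2\in P_2$, which blocks membership in that image, and in case $\langle 1\rangle$ I would invoke the domain tilde hypothesis $T\not\in\sigma_{3\to 1}^\square(\mathcal{T}(m+k-1,m,1))$ to rule out the pathological alignment that would put $c_2$ into a $3$-chain with column $m+k+1$. Injectivity is obtained by writing down an inverse: the final entry of row $2$ of the output identifies the producing case (respectively $b_2$, $d_1$, a specific $d_1^{(i)}$, or $d_1^{(m-1)}$), after which the swap cascade can be undone. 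The main obstacle I foresee is case $\langle 2\text{-}i\rangle$: one must verify simultaneously that every intermediate column comparison survives the cascade and that the index $i$ is recoverable from the output, which requires careful bookkeeping in which the $(3+1)$-free property of Lemma~\ref{lem:basic}(5) together with the $P_l$-class identifications supplied by the cascading $3$-chains does the heavy lifting.
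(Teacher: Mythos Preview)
Your plan is essentially the paper's own approach: case-by-case verification using the $3$-chain $c_1\prec c_2\prec d_1$ (and the cascading chains $b_1^{(j)}\prec b_2^{(j)}\prec d_1^{(j+1)}$) to pin down $P_l$-classes, then using $c_2\in P_2$ for the tilde codomain in the $\langle 2\rangle$ cases and the domain hypothesis $a_2\nprec d_1$ for case $\langle 1\rangle$. One correction to your injectivity sketch: in cases $\langle 2\text{-}0\rangle$ and $\langle 2\text{-}i\rangle$ the last entry of the output's second row is always $d_1$, not ``a specific $d_1^{(i)}$''; the paper instead distinguishes the cases (and recovers $i$) by reading off order relations between fixed \emph{positions} of the output --- e.g.\ $\langle 1\rangle$ versus $\langle 2\rangle$ by whether $b_2\succ e_1$ in the output labeling, and $\langle 2\text{-}(m-2)\rangle$ versus $\langle 2\text{-}\infty\rangle$ by whether the last row-$2$ entry is $\succ a_2$ --- rather than by which preimage label landed where.
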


\begin{lemma} \label{lem:sigma32}
$\widetilde{\sigma}_{3\rightarrow 2}^\square : \widetilde{\mathcal{T}}(m+k+1, m-2, 1)^- \rightarrow \widetilde{\mathcal{T}}(m+k+1, m-1, 0)^+ $ is a well defined injective map.
\end{lemma}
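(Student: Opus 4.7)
The plan has two phases: first, showing that $\widetilde{\sigma}_{3\rightarrow 2}^\square(S)$ lies in $\widetilde{\mathcal{T}}(m+k+1, m-1, 0)^+$ for every $S$, and second, showing injectivity. In each of cases (1), (2), (3-1), (3-2) of Definition~\ref{def:sigma32}, the image differs from $S$ only by a localized rearrangement, so only finitely many column and row adjacencies need re-verification, and in each case we must additionally confirm that the output lies in the tilde set (i.e., is not in the image of $\sigma_{3\rightarrow 1}^{\square\!\square}$ applied to $\mathcal{T}(m+k-1, m-1, 2)^-$).

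For cases (1) and (2), the verification is direct: case (1) only uses the defining hypothesis $b_1 \prec a_3$ plus the fact that $a_3 \in P_3$, while case (2) validates the swapped column $(a_2, b_1)$ by applying Lemma~\ref{lem:basic}(5) to the chain $a_1 \prec a_2 \prec a_3$ with $b_1 \not\prec a_3$ to obtain $a_2 \prec b_1$, and uses $b_1 \not\succ b_2^{(m-3)}$ for the row condition. Case (3-1) performs a three-element cyclic shift on $\{a_2, b_1, e_1\}$; its two sub-hypotheses are tailored so that either $a_2 \prec e_1$ (from Lemma~\ref{lem:basic}(5) applied with $e_1 \not\prec a_3$) or $a_2 \prec d_1$ (explicit) yields the remaining column condition.

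Case (3-2), the sliding step, is where I expect the main technical obstacle. The minimality of the chosen index $i$ with $d_1^{(i)} \nprec b_1^{(i+2)}$ gives $d_1^{(j)} \prec b_1^{(j+2)}$ for every $j < i$; combining these with the original column conditions on $S$, I would prove by induction that each column of the output remains strictly $\prec_f$-increasing after the slide. The row-$1$ conditions at the three seams (end of the unchanged prefix, the junction between the moved $d$-chain and the moved $b$-chain, and the new final entry $b_1$) each reduce to an instance of Lemma~\ref{lem:basic}(5), using that $b_1 \succ b_2^{(m-3)}$ forces $b_1 \in P_3$ and hence constrains the $P_l$-classes of surrounding entries via Lemma~\ref{lem:basic}(4).

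For injectivity, I would define a case-discriminator on the image by reading off, in order: whether $a_3$ occupies the position $(2, m-1)$; the $P_l$-classes of the row-$2$ leftmost entry and the row-$1$ rightmost entry; and, within case (3), the $P_l$-class of the entry in position $(2, m-1)$. These invariants separate the four cases, and within each case the move is reversible because it is either a single swap, a three-cycle on explicitly identifiable positions, or a monotone slide whose terminal index $i$ is recovered as the unique position where the $P_l$-class pattern along row $1$ changes.
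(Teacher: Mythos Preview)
Your overall two-phase plan matches the paper's, and the well-definedness arguments for cases (1), (3-1), and (3-2) are correctly sketched. There is a small slip in case (2): after the swap the new first column is $(a_1, b_1)$, not $(a_2, b_1)$, and what Lemma~\ref{lem:basic}(5) yields from $b_1 \not\prec a_3$ is $a_1 \prec b_1$, not $a_2 \prec b_1$. You also need $a_2 \not\succ c_1$ for the row-1 adjacency at the position where $a_2$ now sits; the paper obtains this from $a_2 < b_1$ (since $a_2 \prec a_3$ and $b_1 \not\prec a_3$) combined with $b_1 \not\succ c_1$.

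The real gap is in your injectivity plan. Discriminating the cases by $P_l$-classes of fixed positions in the output does not work. To separate (1) from (2) you propose looking at the row-2 leftmost entry: in (1) this is $a_2 \in P_2$, but in (2) it is $b_1$, and under the hypotheses of (2) nothing forces $b_1 \in P_3$ (the condition $b_1 \not\succ b_2^{(m-3)}$ is perfectly compatible with $b_1 \in P_2$). Likewise, separating (1)/(2) from (3) by the $P_l$-class of the row-1 rightmost entry fails because in (1)/(2) that entry is $e_1$, and the constraint $b_2^{(m-3)} \not\prec e_1$ does not prevent $e_1 \in P_3$. Finally, your proposed discriminator within case (3), the $P_l$-class of the entry in position $(2,m-1)$, is vacuous: that entry is $a_3 \in P_3$ in every case. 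The paper instead reads off $\prec_f$-relations between specific pairs of output entries: in the generic output naming, cases (1)/(2) satisfy $b_2^{(m-3)} \not\prec e_1$ while case (3) satisfies $b_2^{(m-3)} \prec e_1$; case (1) satisfies $a_2 \prec b_2$ (the entry in position $(2,m-1)$) while case (2) satisfies $a_2 \not\prec b_2$. These relational invariants are exactly the case hypotheses transported to the output, and they separate the images cleanly where $P_l$-membership does not.
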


\medskip

We now have two injective maps $\widetilde{\sigma}_{2\rightarrow 1}^\square : \widetilde{\mathcal{T}}(m+k, m, 0)^- \rightarrow \widetilde{\mathcal{T}}(m+k+1, m-1, 0)^+ $ and
$\widetilde{\sigma}_{3\rightarrow 2}^\square : \widetilde{\mathcal{T}}(m+k+1, m-2, 1)^- \rightarrow \widetilde{\mathcal{T}}(m+k+1, m-1, 0)^+ $. However,  $\widetilde{\sigma}_{2\rightarrow 1}^\square( \widetilde{\mathcal{T}}(m+k, m, 0)^-)$ and $\widetilde{\sigma}_{3\rightarrow 2}^\square(\widetilde{\mathcal{T}}(m+k+1, m-2, 1)^-)$ may intersect. We hence let  $\phi_1:=\widetilde{\sigma}_{2\rightarrow 1}^\square$ and then modify $\widetilde{\sigma}_{3\rightarrow 2}^\square$ to define $\phi_2: \widetilde{\mathcal{T}}(m+k+1, m-2, 1)^- \rightarrow \widetilde{\mathcal{T}}(m+k+1, m-1, 0)^+ $ so that $$\phi_1(\widetilde{\mathcal{T}}(m+k, m, 0)^-)\cap\phi_2(\widetilde{\mathcal{T}}(m+k+1, m-2, 1)^-)  =\varnothing\,.$$

\medskip
We give the definition of $\phi_2$ in Definition~\ref{def:phi2} after we do some necessary background work.

We divide $\phi_1(\widetilde{\mathcal{T}}(m+k, m, 0)^-)$ into two parts according to the properties of the pre-images: When we adopt the names for the entries of $T\in \widetilde{\mathcal{T}}((m+k, m, 0)^-)$ as in Definition~\ref{def:sigma21}, let
$$\widetilde{\mathcal{T}}^{+,1}=\{ \phi_1(T) \,|\, d_1\nsucc c_2 \mbox{ in } T\in \widetilde{\mathcal{T}}((m+k, m, 0)^-) \}\,,$$
$$\widetilde{\mathcal{T}}^{+,2}=\{ \phi_1(T) \,|\, d_1\succ c_2 \mbox{ in } T\in \widetilde{\mathcal{T}}((m+k, m, 0)^-) \}$$ be the images of the sets of tableaux satisfying the conditions $\langle 1\rangle$ and $\langle 2\rangle$ in Definition~\ref{def:sigma21}, respectively. We also let for $0\leq i \leq \min(k-1, m-2)$,
$$\widetilde{\mathcal{T}}^{+,2(i)}:=\{\phi_1(T)\in \widetilde{\mathcal{T}}^{+,2}\,|\,d_1^{(j+1)}\succ b_2^{(j)} \mbox{ for all } j= -1, \dots, i-1, \mbox{ but } d_1^{(i+1)}\nsucc b_2^{(i)} \mbox{ in } T \}, \mbox{ and }$$
$$\widetilde{\mathcal{T}}^{+,2(\infty)}:=\{\phi_1(T)\in \widetilde{\mathcal{T}}^{+,2}\,|\,d_1^{(j+1)}\succ b_2^{(j)} \mbox{ for all } j= 0, \dots,  m-2, \mbox{ in } T \} \, $$
be the images of the sets of tableaux satisfying the conditions $\langle 2$-$i\rangle$ and $\langle 2$-$\infty\rangle$ in Definition~\ref{def:sigma21}, respectively.

Then, we have the following lemmas whose proofs are given in Section~\ref{sec:proofs}.

\begin{lemma} \label{lem:disjoint from T2}
 The image $\widetilde{\sigma}_{3\rightarrow 2}^\square( \widetilde{\mathcal{T}}(m+k+1, m-2, 1)^- )$   of $\widetilde{\sigma}_{3\rightarrow 2}^\square$ is disjoint from $\widetilde{\mathcal T}^{+,2}$.
\end{lemma}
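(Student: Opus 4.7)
The plan is to suppose for contradiction that some tableau $U$ lies in both sets, so that $U=\widetilde{\sigma}_{3\rightarrow 2}^\square(S)=\phi_1(T)$ for some $S\in\widetilde{\mathcal T}(m+k+1,m-2,1)^-$ and some $T\in\widetilde{\mathcal T}(m+k,m,0)^-$ with $d_1^T\succ c_2^T$. The key structural observation is that, since $c_1^T\prec c_2^T$ comes from column $m$ of $T$ and $c_2^T\prec d_1^T$ is the defining condition of membership in $\widetilde{\mathcal T}^{+,2}$, we get a chain of length $3$ in $P$, and Lemma~\ref{lem:basic}(4) forces $c_1^T\in P_1$, $c_2^T\in P_2$, $d_1^T\in P_3$. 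Moreover $T\in\widetilde{\mathcal T}(m+k,m,0)^-$ is equivalent to $a_2^T\not\prec d_1^T$, because a tableau of shape $(m+k,m,0)$ lies in $\sigma_{3\rightarrow 1}^\square(\mathcal T(m+k-1,m,1))$ precisely when moving $d_1^T$ into position $(3,1)$ extends column~$1$ to a length-$3$ chain $a_1\prec a_2\prec d_1$. I will examine each of the four sub-cases (1), (2), (3-1), (3-2) of Definition~\ref{def:sigma32} separately and derive a contradiction by comparing entries of $U$ in the key positions $(1,1)$, $(2,1)$, $(1,m-1)$, $(1,m)$, $(1,m+k)$, $(1,m+k+1)$, $(2,m-1)$ read off from each side.

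Cases (3-1) and (3-2) are handled uniformly. In both, the entry at $(1,m+k+1)$ of $U$ is $b_1^S$ (originally at $(1,m-1)$ in $S$) and the entry at $(1,m)$ is $c_1^S$ (unchanged). On the $\phi_1$ side one has $(1,m+k+1)=c_2^T$ and $(1,m)=c_1^T$ in every sub-type $\langle 2$-$0\rangle$, $\langle 2$-$i\rangle$, $\langle 2$-$\infty\rangle$, whence $b_1^S=c_2^T$ and $c_1^S=c_1^T$. Now in $S$ itself the entries $b_1^S$ and $c_1^S$ are adjacent in row~$1$ with $c_1^S$ to the right, so validity of $S$ demands $c_1^S\not\prec b_1^S$; substituting the identifications yields $c_1^T\not\prec c_2^T$, in direct contradiction with the chain $c_1^T\prec c_2^T$ in $T$.

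For case (1), reading off $(2,1)$ and $(2,m-1)$ gives $a_2^S=a_2^T$, $a_3^S=d_1^T$ in sub-types $\langle 2$-$0\rangle$ and $\langle 2$-$i\rangle$, and $a_2^S=d_1^{(m-2)T}$, $a_3^S=d_1^{(m-1)T}$ in sub-type $\langle 2$-$\infty\rangle$; the column-$1$ chain $a_1^S\prec a_2^S\prec a_3^S$ in $S$ then becomes either $a_2^T\prec d_1^T$ (contradicting $T\in\widetilde{\mathcal T}(m+k,m,0)^-$) or $d_1^{(m-2)T}\prec d_1^{(m-1)T}$ (contradicting the row condition of $T$, since $d_1^{(m-1)T}$ lies strictly to the left of $d_1^{(m-2)T}$ in row~$1$ of $T$). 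In case (2), the identifications become $b_1^S=a_2^T$ (or $d_1^{(m-2)T}$ in sub-type $\langle 2$-$\infty\rangle$) and $a_2^S=b_1^T$. In the $\langle 2$-$\infty\rangle$ sub-type, the relation $b_2^{(m-3)T}\prec d_1^{(m-2)T}$ from the defining condition, together with column~$2$ of $T$, produces a length-$3$ chain $b_1^{(m-3)T}\prec b_2^{(m-3)T}\prec d_1^{(m-2)T}$, forcing $d_1^{(m-2)T}\in P_3$; combined with $c_1^T\in P_1$, a standard Hessenberg estimate yields $c_1^T\prec d_1^{(m-2)T}$ automatically, contradicting the row condition $c_1^S\not\prec b_1^S$ in $S$. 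In the remaining sub-types of case (2), that row condition translates to $f(c_1^T)\geq a_2^T$; combining with $f(c_1^T)<c_2^T$, $f(c_2^T)<d_1^T$, and $d_1^T\leq f(a_2^T)$, the monotonicity of $f$ yields simultaneously $c_2^T\leq a_2^T$ and $a_2^T<c_2^T$, the desired contradiction. This last numeric $f$-value argument is the main obstacle, being the only step that is not settled purely by block-membership reasoning; once it is in place, all four cases collapse and the two image sets are disjoint.
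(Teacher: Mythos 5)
Your argument is correct; I checked the positional identifications (they are accurate: in each sub-type $\langle 2$-$0\rangle$, $\langle 2$-$i\rangle$, $\langle 2$-$\infty\rangle$ the image $\phi_1(T)$ has $c_1^T$ at $(1,m)$ and $c_2^T$ at $(1,m+k+1)$, and has $(a_2^T,d_1^T)$, resp.\ $(d_1^{(m-2)T},d_1^{(m-1)T})$, at $(2,1)$ and $(2,m-1)$), and each contradiction you draw is valid. The overall strategy is the same as the paper's --- a case analysis over the four types of Definition~\ref{def:sigma32}, played off against constraints forced on elements of $\widetilde{\mathcal T}^{+,2}$ by the row/column conditions of $T$ together with $c_2\prec d_1$ and $a_2\nprec d_1$ --- but the bookkeeping differs: the paper first extracts a uniform characterization of $\widetilde{\mathcal T}^{+,2}$ (in the labels of the target shape, $a_2\nprec b_2$, $c_1\prec e_1$ and $b_2\succ e_1$, the last condition needing Remark~\ref{rmk:T 2 infty} for the $\langle 2$-$\infty\rangle$ sub-type) and then shows each image type violates one of these conditions, treating type (2) on the $S$-side via the $(3+1)$-free condition; you instead unfold the sub-types of $T$ and match entries directly. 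In particular, your numeric monotonicity computation in case (2) is exactly Lemma~\ref{lem:basic}(5) applied to the chain $c_1^T\prec c_2^T\prec d_1^T$ and the element $a_2^T$; your $\langle 2$-$\infty\rangle$ sub-case of (2) is settled by the elementary fact that every element of $P_1$ precedes every element of $P_3$ (true, since $f(a)\le f(x_1)\le f(x_1+1)=x_2<c$ for $a\in P_1$, $c\in P_3$), which replaces the paper's appeal to Remark~\ref{rmk:T 2 infty}; and your case (3-2) uses the $c_1$/$e_1$ violation where the paper uses $a_2\prec b_2$. One small wording fix: in case (1), sub-type $\langle 2$-$\infty\rangle$, the row condition of an $f$-tableau constrains only adjacent entries, so you should invoke that $d_1^{(m-1)T}$ and $d_1^{(m-2)T}$ are adjacent in row~1 of $T$ (they are, since they are consecutive in the $d$-string), not merely that one lies to the left of the other.
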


 \begin{lemma}\label{lem:disjoint from T1}
 The image of $\widetilde{\sigma}_{3\rightarrow 2}^\square$ restricted to the case (3) in Definition~\ref{def:sigma32} is disjoint from $\widetilde{\mathcal T}^{+,1}$.
 \end{lemma}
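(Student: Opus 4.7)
The plan is to argue by contradiction, exploiting the fact that $\phi_1=\widetilde{\sigma}_{2\rightarrow 1}^\square$ restricted to case $\langle 1\rangle$ of Definition~\ref{def:sigma21} acts simply by moving the rightmost entry of row $2$ of $T'$ to the end of row $1$. Suppose $U=\widetilde{\sigma}_{3\rightarrow 2}^\square(S)$ for some $S$ in case (3) of Definition~\ref{def:sigma32} and suppose, for contradiction, that $U\in \widetilde{\mathcal{T}}^{+,1}$, so that $U=\phi_1(T')$ for some $T'\in \widetilde{\mathcal{T}}(m+k, m, 0)^-$ of type $\langle 1\rangle$. Then $T'$ is uniquely recovered from $U$ by the reverse operation: move the last entry of row $1$ of $U$ to position $(2,m)$. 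In particular $T'(1,m)=U(1,m)$ and $T'(2,m)=U(1,m+k+1)$, and these entries must satisfy $T'(1,m)\prec_f T'(2,m)$ for $T'$ to be a valid $f$-tableau.

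The key computation is to locate the entries at positions $(1,m)$ and $(1,m+k+1)$ of $U$ in the two sub-cases of case (3). For (3-1) they are read off directly from the displayed image and equal $c_1$ and $b_1$, respectively. For (3-2) a short position count shows that the $i+2$ entries $d_1^{(i)},\ldots,d_1^{(0)},e_1$ shifted into row $1$ of $S$ occupy positions $m-i-2$ through $m-1$ of row $1$ of $U$; consequently positions $(1,m)$ and $(1,m+k+1)$ of $U$ still carry $c_1$ and $b_1$ from $S$. Hence in both sub-cases the prospective pre-image would satisfy $T'(1,m)=c_1$ and $T'(2,m)=b_1$, so the column condition on $T'$ becomes $c_1\prec_f b_1$.

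The final step is to derive the contradiction directly from the row condition on $S$: since $S$ is an $f$-tableau and $c_1$ sits immediately to the right of $b_1$ in the first row of $S$, the row condition gives $b_1\nsucc_f c_1$, equivalently $c_1\nprec_f b_1$. This is inconsistent with the column requirement on $T'$, so no such $T'$ exists and $U\notin \widetilde{\mathcal{T}}^{+,1}$. The main obstacle is the routine (but slightly fiddly) position bookkeeping needed in sub-case (3-2); once that is settled, the conclusion follows from the single observation that the entry $b_1$ placed at the end of row $1$ by $\widetilde{\sigma}_{3\rightarrow 2}^\square$ cannot legally appear directly below $c_1$ in any $f$-tableau, because $b_1$ and $c_1$ are adjacent in row $1$ of the original $S$.
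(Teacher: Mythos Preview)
Your proof is correct and follows essentially the same approach as the paper: both arguments identify that in the image $\widetilde{\sigma}_{3\rightarrow 2}^\square(S)$ for $S$ of type (3), positions $(1,m)$ and $(1,m+k+1)$ are occupied by $c_1$ and $b_1$ from $S$, and then use the row condition $b_1\nsucc_f c_1$ in $S$ to contradict the requirement (which the paper phrases as ``$c_1\prec e_1$'' in the generic labeling of elements of $\widetilde{\mathcal T}^{+,1}$) that the $(1,m)$ entry be $\prec_f$ the $(1,m+k+1)$ entry. Your version is slightly more explicit about the position bookkeeping in sub-case (3-2) and about recovering the would-be preimage $T'$, but the substance is identical.
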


 \begin{definition}\label{def:phi2}
 We define $\phi_2: \widetilde{\mathcal{T}}(m+k+1, m-2, 1)^- \rightarrow \widetilde{\mathcal{T}}(m+k+1, m-1, 0)^+ $ for two integers $m\geq 3$ and $k\geq 0$ as follows:
Let
$$S=\scriptsize{ \ttableau{a_1& b_1^{(m-3)}& \cdots&b_1^{(2)}&b_1^{(1)}& b_1&c_1& d_1^{(k-1)}& \cdots&d_1^{(2)}& d_1^{(1)}&d_1& e_1\\
                                            a_2& b_2^{(m-3)}& \cdots&b_2^{(2)}& b_2^{(1)}\\
                                            a_3}}$$
be an $f$-tableau in $\widetilde{\mathcal{T}}(m+k+1, m-2, 1)^- \,.$

\begin{enumerate}
\item [(1)] When $b_1 \prec a_3$, set
         $$R_0:=\widetilde{\sigma}_{3\rightarrow 2}^\square(S)=
         \scriptsize{ \ttableau{a_1& b_1^{(m-3)}& \cdots&b_1^{(2)}&b_1^{(1)}& b_1&c_1& d_1^{(k-1)}& \cdots&d_1^{(2)}& d_1^{(1)}&d_1& e_1\\
                                            a_2& b_2^{(m-3)}& \cdots&b_2^{(2)}& b_2^{(1)}&\red{a_3}
                                            }}\,.$$

\begin{enumerate}
\item [$\bullet$] If $R_0 \not \in \widetilde {\mathcal T}^{+,1}$, then we let $\phi_2(S)=R_0$.
\item [$\bullet$] If $R_0 \in \widetilde{\mathcal T}^{+,1}$, then set
$$R_1:=  \scriptsize{ \ttableau{a_1& b_1^{(m-3)}& \cdots&b_1^{(2)}&b_1^{(1)}& b_1&c_1& d_1^{(k-1)}& \cdots&d_1^{(2)}& d_1^{(1)}&d_1& \red{a_2}\\
                                            \red{e_1}& b_2^{(m-3)}& \cdots&b_2^{(2)}& b_2^{(1)}& a_3
                                            }}\,.$$

\begin{enumerate}
\item [--] If $R_1 \not\in \widetilde{\mathcal T}^{+,2}$, then we let $\phi_2(S) =R_1$.


\item [--] If $R_1 \in \widetilde{\mathcal{T}}^{+,2 (i) }$ for some $0 \leq i <m-2$, then we let $\phi_2(S)$ be, where $b_2^{(0)}=a_3, d_1^{(0)}=d_1$,
  $$\hspace{2.8cm}\scriptsize{\ttableau{a_1&\cdots&b_1^{(i+1)}&b_1^{(i)}&\cdots &b_1^{(1)}&b_1& c_1& \cdots&d_1^{(i+1)}& \red{b_2^{(i)}}&\red{\cdots}& \red{b_2^{(1)}}&\red{a_3}& e_1\\
                                             a_2&\cdots& b_2^{(i+1)}&\red{d_1^{(i)}}&\red{\cdots} &\red{d_1^{(1)}}  &  \red{d_1} }}\,. $$

\item [--] If $R_1 \in \widetilde{\mathcal{T}}^{+,2 (m-2) }$, then we let $\phi_2(S)$ be
   $$\hspace{2.8cm}\scriptsize{\ttableau{a_1&b_1^{(m-3)}&\cdots  &b_1^{(1)}&b_1& c_1& \cdots&d_1^{(m)}& d_1^{(m-1)}   & \blue{e_1} &\red{b_2^{(m-3)}}&\red{\cdots}& \red{b_2^{(1)}}&\red{a_3}& \blue{d_1^{(m-2)}}\\
                                             a_2 & \red{d_1^{(m-3)}}& \red{ \cdots}  &\red{d_1^{(1)}}  &  d_1  }}\,. $$

\item [--] If $R_1 \in \widetilde{\mathcal{T}}^{+,2(\infty)}$,
 then we let $\phi_2(S)$ be
  $$\hspace{2.8cm}\scriptsize{\ttableau{a_1&b_1^{(m-3)}&\cdots  &b_1^{(1)}&b_1& c_1& \cdots&d_1^{(m)}& \blue{a_3} & \blue{e_1} &\red{b_2^{(m-3)}}&\red{\cdots}& \red{b_2^{(1)}}&\blue{d_1^{(m-1)}}& \blue{d_1^{(m-2)}}\\
                                             a_2 & \red{d_1^{(m-3)}}&\red{\cdots}   &\red{d_1^{(1)}}  &  d_1   } }\,. $$

\end{enumerate}

\end{enumerate}

\medskip \medskip
\item [(2)] When $b_1 \not\prec a_3$ and $b_1 \not \succ b_2^{(m-3)}$, set
          $$Q_0:=\widetilde{\sigma}_{3\rightarrow 2}^\square(S)=
         \scriptsize{ \ttableau{a_1& b_1^{(m-3)}& \cdots&b_1^{(2)}&b_1^{(1)}&\red{a_2}& c_1&d_1^{(k-1)}& \cdots& d_1^{(2)}& d_1^{(1)}&d_1& e_1\\
                                            \red{b_1}&b_2^{(m-3)}&\cdots&b_2^{(2)} & b_2^{(1)}&  a_3 }}\,.$$

\begin{enumerate}
    \item[$\bullet$] If $Q_0 \not \in\widetilde{\mathcal T}^{+,1}$, then we let $\phi_2(S)=Q_0$.

    \item[$\bullet$] If $Q_0 \in   \widetilde{\mathcal T}^{+,1}$, then we let
  $$\phi_2(S)=  \scriptsize{ \ttableau{a_1& b_1^{(m-3)}& \cdots&b_1^{(2)}&b_1^{(1)}&{a_2}& c_1&d_1^{(k-1)}& \cdots& d_1^{(2)}& d_1^{(1)}&d_1& \red{b_1}\\
                                            \red{e_1}&b_2^{(m-3)}&\cdots&b_2^{(2)} & b_2^{(1)}&  a_3 }}\,.$$

\end{enumerate}
\item [(3)] When $b_1 \not\prec a_3$ and $b_1   \succ b_2^{(m-3)}$, then  we let $\phi_2(S)=\widetilde{\sigma}_{3\rightarrow 2}^\square(S)$.

\end{enumerate}
\end{definition}

A proof of the following proposition is given in Section~\ref{sec:proofs}.

\begin{proposition} \label{prop:phi2} The map $\phi_2 : \widetilde{\mathcal{T}}(m+k+1, m-2, 1)^- \rightarrow \widetilde{\mathcal{T}}(m+k+1, m-1, 0)^+ $ in Definition~\ref{def:phi2} is a well defined injective map and satisfies the following property;
 $$\phi_1(\widetilde{\mathcal{T}}(m+k, m, 0)^-)\cap\phi_2(\widetilde{\mathcal{T}}(m+k+1, m-2, 1)^-)  =\varnothing\,,$$
where $\phi_1$ is the injective map $\widetilde{\sigma}_{2\rightarrow 1}^\square : \widetilde{\mathcal{T}}(m+k, m, 0)^- \rightarrow \widetilde{\mathcal{T}}(m+k+1, m-1, 0)^+ $ defined in Definition \ref{def:sigma21}.
\end{proposition}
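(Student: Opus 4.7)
The plan is to verify the three assertions of the proposition in turn: well-definedness of $\phi_2$, disjointness $\phi_1(\widetilde{\mathcal{T}}(m+k, m, 0)^-) \cap \phi_2(\widetilde{\mathcal{T}}(m+k+1, m-2, 1)^-) = \varnothing$, and injectivity of $\phi_2$. I would treat Cases (1), (2), (3) of Definition~\ref{def:phi2} separately, and within Case (1) the five further branches according to whether $R_0$ is used, $R_1$ is used, or $R_1 \in \widetilde{\mathcal{T}}^{+,2(i)}$ for some $i = 0, 1, \dots, m-2, \infty$.

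For well-definedness, the task is to check that each displayed output of $\phi_2$ is an $f$-tableau of shape $(m+k+1, m-1, 0)$: columns must be strictly $\prec_f$-increasing, and the row condition $i \nsucc_f j$ must hold at each position where a swap occurs. Since in a $3$-row $f$-tableau the entries of the three rows sit inside $P_1$, $P_2$, $P_3$ along any maximal chain by Lemma~\ref{lem:basic}(4), and the case hypotheses (for instance $b_1 \prec a_3$ in Case (1), or $b_1 \nprec a_3$ together with $b_1 \succ b_2^{(m-3)}$ in Case (3)) pin down the block assignments of $b_1, a_2, a_3, e_1$, the verification reduces via the $(3+1)$-free condition Lemma~\ref{lem:basic}(5) to a direct check at each swapped position.

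For disjointness from $\phi_1(\widetilde{\mathcal{T}}(m+k,m,0)^-) = \widetilde{\mathcal{T}}^{+,1} \cup \widetilde{\mathcal{T}}^{+,2}$, Case (3) of $\phi_2$ is simply the restriction of $\widetilde{\sigma}_{3\rightarrow 2}^\square$ to its Case (3), whose image is disjoint from $\widetilde{\mathcal{T}}^{+,2}$ by Lemma~\ref{lem:disjoint from T2} and from $\widetilde{\mathcal{T}}^{+,1}$ by Lemma~\ref{lem:disjoint from T1}. For Cases (1) and (2), the construction is explicitly designed as a membership-test-and-modify procedure: if the first candidate $R_0$ (resp.\ $Q_0$) lies in $\widetilde{\mathcal{T}}^{+,1}$, the algorithm produces an alternative $R_1$ (resp.\ the modified $Q_1$); if $R_1$ further lies in one of the strata $\widetilde{\mathcal{T}}^{+,2(i)}$, a cascading modification is applied. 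For each of these outputs I would compare its shape and entries against the explicit forms produced by $\widetilde{\sigma}_{2\rightarrow 1}^\square$ in Definition~\ref{def:sigma21} and derive a contradiction using the branch's defining hypothesis (for example, the assumption $a_2 \nprec d_1$ in the cascade step rules out the $\langle 2\text{-}i\rangle$-signature of $\phi_1$).

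Injectivity will be the main obstacle. The strategy is to construct a partial inverse by reading off from an output tableau, in order: (i) which major case of Definition~\ref{def:phi2} was used, by inspecting the entry at position $(2, m-1)$ and its $\prec_f$-relation to the first-row entries above it and to its right; (ii) within Case (1), whether $R_0$, $R_1$, or one of the cascaded modifications was produced, by measuring the length of the contiguous block of second-row-originating entries that appears in the top row and the location of $e_1$; and (iii) the original entries of $S$, by inverting the documented swap. The delicate point is that several sub-sub-cases of Case (1) yield tableaux very similar in shape to outputs of $\phi_1$'s own $\langle 2\text{-}i\rangle$ branch, and distinguishability among the cascades indexed by $i = 0, 1, \dots, m-2, \infty$ hinges on the simultaneous placement of $e_1$ and of the rightmost top-row entry, which together must uniquely encode both the case label and the pre-image.
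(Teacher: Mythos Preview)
Your proposal is correct and follows essentially the same approach as the paper: a case-by-case traversal of Definition~\ref{def:phi2}, verifying for each output that it is an $f$-tableau, that it lies outside $\widetilde{\mathcal{T}}^{+,1}\cup\widetilde{\mathcal{T}}^{+,2}$, and that it is distinguished from every previously produced output by an explicit $\prec_f$-relation among its entries. The paper executes exactly this plan, annotating each of $R_0,R_1,R_2,Q_0,Q_1$ and the Case~(3) outputs with the relevant distinguishing conditions as they are produced.
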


\medskip
Now, if we define the map
$$\phi:   \widetilde{\mathcal T}(m+k,m,0)^- \cup \widetilde{\mathcal T}(m+k+1,m-2,1)^- \rightarrow \widetilde{\mathcal T}(m+k+1,m-1,0)^+ $$
by $$\phi|_{\widetilde{\mathcal T}(m+k,m,0)^-}=\phi_1 \quad \mbox{ and } \quad \phi|_{\widetilde{\mathcal T}(m+k+1,m-2,1)^-}=\phi_2\,,$$ then $\phi$ is an injective map. This completes a proof of Theorem~\ref{thm:main} in \textbf{Case I}.
\end{proof}

We close this subsection(Case I) with an example that illustrates the definitions of maps $\phi_1$ and $\phi_2$.
\begin{example}

Let $f=(2, 3, 5, 6, 7, 8, 8, 8)$ be the Hessenberg function we considered in Example~\ref{ex:23567888}. We let $m=3$ and $k=2$ in Case I, whose diagrams are given in Figure~\ref{fig:CaseIex}.

\begin{figure}[ht]
\begin{center}
$
\begin{tikzcd}[row sep=huge, column sep=small]
\mathcal{T}(4,2,2)^- \arrow[rrdd,"\sigma_{3\rightarrow 1}^{\square\!\square} ", hook] & \mathcal{T}(4, 3, 1)^+ \arrow[rd,"\sigma_{3\rightarrow 1}^\square", hook] & \\
\mathcal{T}(5,1,2)^+=\varnothing \arrow[rd,"\sigma_{3\rightarrow 1}^\square", hook] &&\mathcal{T}(5, 3, 0)^-\arrow[d, dashed]\\
& \mathcal{T}(6, 1, 1)^-\arrow[r,dashed]&{\mathcal{T}(6, 2, 0)^+}
\end{tikzcd}
$ \quad
$
\begin{tikzcd}[row sep=huge, column sep=huge]
\qquad\qquad & \widetilde{\mathcal{T}}(5,3, 0)^- \arrow[d,"\phi_1=\widetilde{\sigma}_{2\rightarrow 1}^\square", hook]\\
 \widetilde{\mathcal{T}}(6, 1, 1)^- \arrow[r,"\phi_2={(\widetilde{\sigma}_{3\rightarrow 2}^\square)^*}", hook]&  \widetilde{\mathcal{T}}(6, 2, 0)^+
\end{tikzcd}
$
 \end{center}
  \caption{\label{fig:CaseIex} An example of Case I. }
\end{figure}
We let 
$T_1=\tableau{1&3&2&6&\red{5}\\4&8&\red{7}}\,$,
$T_2=\tableau{1&3&2&8&\red{7}\\5&\red{6}&\red{4}}\,$, and
$T_3=\tableau{3&2&1&\red{7}&\red{8}\\\red{6}&\red{4}&\red{5}}$ be $f$-tableaux in                                                                      $\widetilde{\mathcal{T}}(5,3, 0)^-$. Note that $T_i$, $i=1, 2, 3$, are not in $\mathcal{T}(4, 3, 1)^+$ because $4\not\prec 5$, $5\not\prec 7$, $6\not\prec 8$, respectively.

Then,
\begin{itemize}
\item $T_1$ is of type $\langle 1\rangle$ and  $\phi_1(T_1)=\tableau{1&3&2&6&\red{5}&\red{7}\\4&8}\,$ since $5\not\succ 7$.

\item $T_2$ is type $\langle 2$-$0 \rangle$ and $\phi_1(T_2)=\tableau{1&3&2&8&\red{6}&\red{4}\\5&\red{7}}\,$ since $4\prec 7$ but $6\not\prec 8$.

\item $T_3$ is type $\langle 2$-$1 \rangle$ and $\phi_1(T_3)=\tableau{3&2&1&\red{6}&\red{4}&\red{5}\\ \red{7}&\red{8}}\,$ since $5\prec 8$ and $4\prec 7$ but $6\not\prec 1$.
\end{itemize}
It is easy to check that $\phi_1(T_i)\not\in \sigma_{3\rightarrow 1}^{\square\!\square}(\mathcal{T}(4,2,2)^-)$ for $i=1, 2, 3$. Note that type  $\langle 2$-$\infty \rangle$ does not occur because $m-2=1=k-1$.

\medskip

Let 
$S_1=\tableau{1&3&2&5&8&6\\4\\7}\,$, $S_2=\tableau{1&3&2&6&5&7\\4\\8}\,$ and
$S_3=\tableau{1&5&3&2&6&8\\4\\7}\,$ be $f$-tableaux in                                                                      $\widetilde{\mathcal{T}}(6, 1, 1)^-$. Then

\begin{itemize}
\item $S_1$ is of type (1) and $\widetilde{\sigma}_{3\rightarrow 2}^\square(S_1)=\tableau{1&3&2&5&\blue{8}&6\\\blue{4}&\red{7}}=R_0\,$. We can check that $R_0\not\in \widetilde {\mathcal T}^{+,1}$ since $4\prec 8$; if $R_0=\phi_1(T)=\widetilde{\sigma}_{2\rightarrow 1}^\square(T)$ then $T$ must be in $\mathcal{T}(4, 3, 1)^+$. Hence $\phi_2(S_1)=R_0$.

\item $S_2$ is of type (1) and $\widetilde{\sigma}_{3\rightarrow 2}^\square(S_2)=\tableau{1&3&2&6&5&\blue{7} \\\blue{4}&\red{8}}=R_0\,$. Then $R_0=\phi_1(T_1)$ for $T_1\in \widetilde{\mathcal{T}}(5,3, 0)^-$ given above. Hence we set $R_1:= \tableau{1&3&2&6&5&\blue{4} \\\blue{7}&8}\,$, and $R_1\not\in \widetilde {\mathcal T}^{+,2}$ because $3\not\prec 5$. Therefore $\phi_2(S_1)=R_1$.

\item $S_3$ is of type (2) and $\widetilde{\sigma}_{3\rightarrow 2}^\square(S_3)=\tableau{1&\red{4}&3&2&6&8\\\red{5}&\red{7}}=Q_0\,$. Then we can check that $Q_0\in\widetilde {\mathcal T}^{+,1}$; since $Q_0=\phi_1(T)$ for $T=\tableau{1&4&3&2&6\\5&7&8}\in \widetilde{\mathcal{T}}(5, 3, 0)^-$. Thus we let $\phi_2(S_3)=\tableau{1&4&3&2&6&\red{5}\\\red{8}&7}\,$.

\end{itemize}
\end{example}
\bigskip

\subsection{\textbf{Case II}}


Assume that $\mu_1=\mu_2+1$ and $\mu_2\not=\mu_3+1$.  Then from the diagram, Figure~\ref{fig:diagram II}, of $\mathcal{K}_\alpha\cup \mathcal{K}_\beta$, we need to injectively map the $f$-tableaux in $\mathcal{T}(\mu_1-1, \mu_2, \mu_3+1)\cup \mathcal{T}(\mu_1, \mu_2-1, \mu_3+1)$ into $\mathcal{T}(\mu_1-1, \mu_2-1, \mu_3+2)\cup \mathcal{T}(\mu_1, \mu_2, \mu_3)$.  Note that, if $\mu_2=\mu_3$ then $(\mu_1, \mu_2, \mu_3)$ is the only partition in the diagram of $\mathcal{K}_\alpha\cup \mathcal{K}_\beta$ and $c_\mu$ is nonnegative. Thus we assume that  $\mu_2>\mu_3+1$.

\begin{figure}[ht]
\begin{center}
$$
\begin{tikzcd}[row sep=huge]
{\mathcal{T}(\mu_1-1, \mu_2-1, \mu_3+2)^+}\arrow[rd,"\sigma_{3\rightarrow 1}^\square", hook]  & {\mathcal{T}(\mu_1-1, \mu_2, \mu_3+1)^- }\arrow[rd,"\sigma_{3\rightarrow 1}^\square", hook] &\\
&{\mathcal{T}(\mu_1, \mu_2-1, \mu_3+1)^-}\arrow[r,dashed]&{\mathcal{T}(\mu_1, \mu_2, \mu_3)^+}
\end{tikzcd}
$$
 \end{center}
  \caption{\label{fig:CaseII} Case II. }
\end{figure}

By Lemma~\ref{lem:3to1}, we have
$$\mathcal{T}(\mu_1-1, \mu_2, \mu_3+1)^-\simeq \sigma_{3\rightarrow 1}^\square(\mathcal{T}(\mu_1-1, \mu_2, \mu_3+1)^-)\subseteq \mathcal{T}(\mu_1, \mu_2, \mu_3)^+ \,,\,\, \mbox{  and} $$
$$\mathcal{T}(\mu_1-1, \mu_2-1, \mu_3+2)^+ \simeq \sigma_{3\rightarrow 1}^\square(\mathcal{T}(\mu_1-1, \mu_2-1, \mu_3+2)^+)\subseteq \mathcal{T}(\mu_1, \mu_2-1, \mu_3+1)^-\,,$$
as described in Figure ~\ref{fig:CaseII}.
Hence, we are left to define an injection $\phi$
\begin{align*}
&\mbox{from } \widetilde{\mathcal{T}}(\mu_1, \mu_2-1, \mu_3+1)^-=\mathcal{T}(\mu_1, \mu_2-1, \mu_3+1)^- - \sigma_{3\rightarrow 1}^\square(\mathcal{T}(\mu_1-1, \mu_2-1, \mu_3+2)^+) \\
&\mbox{to \quad} \widetilde{\mathcal{T}}(\mu_1, \mu_2, \mu_3)^+= \mathcal{T}(\mu_1, \mu_2, \mu_3)^+ -  \sigma_{3\rightarrow 1}^\square(\mathcal{T}(\mu_1-1, \mu_2, \mu_3+1)^-) \,.
\end{align*}
Note that, a tableau $T$ in $ \widetilde{\mathcal{T}}(\mu_1, \mu_2-1, \mu_3+1)^-$ can be written as $T=R\cup S$ where $R\in \mathcal{T}(\mu_3, \mu_3, \mu_3)$ and $S \in \mathcal{T}(\mu_1-\mu_3, \mu_2-\mu_3-1, 1)$. Since $\mu_2-1=\mu_1-2$, and we will manipulate only $S$ to define the image $\phi(T)$ of $T$, we may assume that $(\mu_1, \mu_2-1, \mu_3+1)=(m+2, m, 1)$ for $m\geq 1$.

\medskip
\begin{proof}[Proof of Theorem~\ref{thm:main} in \textbf{Case II}]

\quad
 We first give a definition of $\phi :  \widetilde{\mathcal{T}}(m+2, m, 1)^- \rightarrow \widetilde{\mathcal{T}}(m+2, m+1, 0)^+$, with the reasoning stated in parentheses that the given tableaux are contained in  $\widetilde{\mathcal{T}}(m+2, m+1, 0)^+$.
 $$ \mbox{Let  }S=\tableau{a_1&b_1&\cdots& c_1& d_1& e_1\\
                          a_2& b_2&\cdots& c_2\\
                          a_3} \mbox{\,\, be an element of \,}  \widetilde{\mathcal{T}}(m+2, m, 1)^-\,.$$
Then,  note that $b_2 \not\prec e_1$ since $S\not\in\sigma_{3\rightarrow 1}^\square(\mathcal{T}(m+1, m, 2))$.
\begin{enumerate}
\item[(1)] When $d_1\prec a_3$,
   \begin{itemize}
   \item[(1-1)] if $a_2\nprec e_1$, then let
              $$\phi(S)=  \tableau{a_1& b_1&\cdots& c_1& d_1& e_1\\
                                        a_2& b_2&\cdots& c_2& \red{a_3}}\,, \mbox{ and }$$

   \item[(1-2)] if  $a_2\prec e_1$, then let
             $$ \phi(S)=  \tableau{a_1& b_1&\cdots& c_1& d_1& \red{a_2}\\
                                 \red{e_1}& b_2&\cdots& c_2& \red{a_3}}\,. $$
                                 (Since $d_1 \in P_1 \cup P_2$ and $a_2 \in P_2$, we have $d_1 \not \succ a_2$.)
   \end{itemize}

\item[(2)] When $d_1\nprec a_3$ and $d_1\nsucc b_2$,
     (In this case we have $a_1 \prec d_1$ because $a_1 \prec a_2 \prec a_3$ and $d_1 \not\prec a_3$.)
   \begin{itemize}
   \item[(2-1)] if $d_1\nprec e_1$, then let
              $$\phi(S)=\tableau{a_1& b_1&\cdots& c_1& \red{a_2}& e_1\\
                                        \red{d_1}& b_2&\cdots& c_2& \red{a_3}}\,, \mbox{ and } $$
    \item[(2-2)] if  $d_1\prec e_1$, then let
             $$ \phi(S)=  \tableau{a_1& b_1&\cdots& c_1& \red{a_2}& \red{d_1}\\
                                             \red{e_1}& b_2&\cdots& c_2& \red{a_3}}\,. $$
             (Since $d_1 \not\prec a_3$ and $d_1 \prec e_1$, $d_1 \in P_2  $ and thus $a_2 \not \succ d_1$.)
   \end{itemize}

 \item[(3)] When $d_1\nprec a_3$ and $d_1\succ b_2$,
   \begin{itemize}
   \item[(3-1)] if $a_1\prec e_1$, then let
              $$\phi(S)=\tableau{a_1& b_1&\cdots& c_1& \red{a_2}& \red{d_1}\\
                                        \red{e_1}& b_2&\cdots& c_2& \red{a_3}}\,, \mbox{ and }$$

   \item[(3-2)] if  $a_1\nprec e_1$, then let
             $$ \phi(S)=  \tableau{a_1& b_1&\cdots& c_1& \red{e_1}& \red{d_1}\\
                                             a_2& b_2&\cdots& c_2& \red{a_3}}\,. $$
   (Since $a_1 \prec a_2 \prec a_3$ and $a_1 \not \prec e_1$, we have $e_1 \prec a_3$. Since $b_1 \prec b_2 \prec d_1$, we have $d_1 \in P_3$.  Since $e_1 \prec a_3$ and $d_1 \not\succ e_1$, we have $e_1 \in P_2$. Thus $c_1 \not \succ e_1$ and $e_1 \not \succ d_1$.)
   \end{itemize}
 \end{enumerate}

 We defined a map  $\phi :  \widetilde{\mathcal{T}}(m+2, m, 1)^- \rightarrow \widetilde{\mathcal{T}}(m+2, m+1, 0)^+$ and we now check that $\phi$ is injective:
 In each case (1-1) through (3-2), it is clear that $\phi$ is injective. We hence show that the image sets of $\phi$ for different cases are disjoint.

The image of $S$ of type (1) or of type (2) (respectively, of type (3)) is contained in the set of $f$-tableaux
$$  \tableau{\a_1& \b_1&\cdots& \c_1& \d_1& \e_1\\
                                        \a_2& \b_2&\cdots& \c_2& \d_2} $$
such that $\b_2 \not \prec \e_1$ (respectively, $\b_2 \prec \e_1$).

The image of $S$ of type (3-1) (respectively, of type (3-2)) is contained in the set of $f$-tableaux
$$  \tableau{\a_1& \b_1&\cdots& \c_1& \d_1& \e_1\\
                                        \a_2& \b_2&\cdots& \c_2& \d_2} $$
such that $\a_1 \prec \d_1$ (respectively, $\a_1 \not \prec \d_1$).

The image of $S$ of type (1-1)   (respectively, of type (1-2) or of type (2)) is contained in the set of $f$-tableaux
$$  \tableau{\a_1& \b_1&\cdots& \c_1& \d_1& \e_1\\
                                        \a_2& \b_2&\cdots& \c_2& \d_2} $$
such that $\a_2 \prec \d_2$ (respectively, $\a_2 \not \prec \d_2$).

The image of $S$ of type (1-2)  or of type (2-2)  (respectively,   of type (2-1)) is contained in the set of $f$-tableaux
$$  \tableau{\a_1& \b_1&\cdots& \c_1& \d_1& \e_1\\
                                        \a_2& \b_2&\cdots& \c_2& \d_2} $$
such that $\a_2 \succ \e_1$ (respectively, $\a_2 \not\succ \e_1$).

The image of $S$ of type (1-2)   (respectively,   of type (2-2)) is contained in the set of $f$-tableaux
$$  \tableau{\a_1& \b_1&\cdots& \c_1& \d_1& \e_1\\
                                        \a_2& \b_2&\cdots& \c_2& \d_2} $$
such that $\d_2 \succ \e_1$ (respectively, $\d_2 \not\succ \e_1$).

 This completes our proof of Theorem~\ref{thm:main} in Case II.
\end{proof}

\bigskip

\subsection{\textbf{Case III}}

Assume that $\mu_1\not=\mu_2+1$ and $\mu_2=\mu_3+1$.
Then, as in \textbf{Case II} it is enough to define an injective map $\phi$
\begin{align*}
&\mbox{from } \widetilde{\mathcal{T}}(\mu_1-1, \mu_2+1, \mu_3 )^-=\mathcal{T}(\mu_1-1, \mu_2+1, \mu_3)^- - \sigma_{3\rightarrow 1}^\square(\mathcal{T}(\mu_1-2, \mu_2+1, \mu_3+1)^+) \\
&\mbox{to \quad} \widetilde{\mathcal{T}}(\mu_1, \mu_2, \mu_3)^+= \mathcal{T}(\mu_1, \mu_2, \mu_3)^+ -  \sigma_{3\rightarrow 1}^\square(\mathcal{T}(\mu_1-1, \mu_2, \mu_3+1)^-)\,,
\end{align*}
as one can see in Figure ~\ref{fig:CaseIII}. Also note that we may assume $(\mu_1-1, \mu_2+1, \mu_3 )=(2+k,2,0)$ for $k\geq 1$.
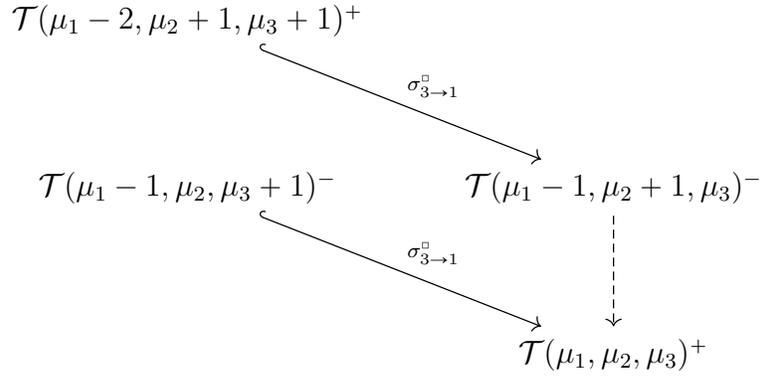
\begin{figure}[ht]
\begin{center}
$$
\begin{tikzcd}[row sep=huge]
 \mathcal{T}(\mu_1-2, \mu_2+1, \mu_3+1)^+ \arrow[rd,"\sigma_{3\rightarrow 1}^\square", hook]  & \\
 \mathcal{T}(\mu_1-1, \mu_2, \mu_3+1)^-
 \arrow[rd,"\sigma_{3\rightarrow 1}^\square", hook]
  &{\mathcal{T}(\mu_1-1, \mu_2+1, \mu_3)^-}\arrow[d,dashed]\\
&{\mathcal{T}(\mu_1, \mu_2, \mu_3)^+}
\end{tikzcd}
$$
 \end{center}
  \caption{\label{fig:CaseIII} Case III. }
\end{figure}

\medskip
\begin{proof}[Proof of Theorem~\ref{thm:main} in \textbf{Case III}]
\quad  We first give a definition of $\phi :  \widetilde{\mathcal{T}}(2+k, 2, 0)^- \rightarrow \widetilde{\mathcal{T}}(3+k, 1, 0)^+$, with the reasoning stated in parentheses that the given tableaux are contained in  $ \widetilde{\mathcal{T}}(3+k, 1, 0)^+$.
 $$ \mbox{Let  }T=\tableau{a_1&b_1&c_1&\cdots& d_1 \\
                          a_2& b_2 } \mbox{\,\, be an element of \,}  \widetilde{\mathcal{T}}(2+k , 2, 0)^-\,.$$
That is, $T$ is an $f$-tableau and  $a_2 \not\prec d_1$.

\begin{enumerate}
\item[$\langle 1\rangle$] When $d_1\not\succ b_2$,
   \begin{itemize}
   \item[$\langle 1$-$1\rangle$] if $a_2\not\prec b_2$, then let
              $$\phi(T)=  \tableau{a_1& b_1&c_1&\cdots&   d_1&\red{b_2}\\
                                        a_2 }\,, \mbox{ and } $$
              (Since $d_1 \not \succ b_2$ and $a_2 \not \prec b_2$, we have  $\phi(T) \in \widetilde{\mathcal T}(3+k, 1,0)$.)
   \item[$\langle 1$-$2\rangle$] if  $a_2\prec b_2$, then let
             $$ \phi(T)=  \tableau{a_1& b_1&c_1&\cdots&   d_1&\red{a_2}\\
                                        \red{b_2}  }\,. $$
             (Since $a_1 \prec a_2 \prec b_2$ and $a_2 \not \prec d_1$,   $\phi(T)$ is an $f$-tableau.)
   \end{itemize}

\item [$\langle 2\rangle$] When $d_1 \succ b_2$, (then, since $a_1 \not \succ b_1$ and $b_2 \succ b_1$, we have $a_1 <b_2$ and thus $a_1 \prec d_1$)
   \begin{itemize}
   \item[$\langle 2$-$1\rangle$] if $a_1 \not\prec b_2$ and $a_1 \not\succ c_1$, then  let
              $$\phi(T)=\tableau{\red{b_1}& \red{a_1}&c_1&\cdots&   d_1& a_2 \\
                                         b_2  } \,, $$
 (Since $b_1 \prec b_2$, we have $b_1 <a_1$.)

   \item[$\langle 2$-$2\rangle$] if  $a_1 \not\prec b_2$ and $a_1  \succ c_1$, then let
             $$ \phi(T)=  \tableau{a_1& b_1&c_1&\cdots&   \red{a_2}& b_2 \\
                                         \red{d_1} } \,, \mbox{ and }$$
             (Since $c_1 \prec a_1 \prec a_2$, we have $a_2 \in P_3$)
   \item[$\langle 2$-$3\rangle$] if  $a_1  \prec b_2$, then let
             $$ \phi(T)=  \tableau{a_1& b_1&c_1&\cdots&   d_1& \red{a_2} \\
                                         \red{b_2} }\,. $$
   \end{itemize}

 \end{enumerate}
Then $\phi$ is a map  from $\widetilde{\mathcal T}(2+k, 2,0)$ to $\widetilde{\mathcal T}(3+k, 1,0)$.
It remains to show that $\phi$ is injective.

The image of $T$ of type $\langle  1\rangle$ or of type $\langle 2$-$2 \rangle$ (respectively, of type $\langle 2$-$1 \rangle$ or of type $\langle 2$-$3 \rangle$) is contained in the set of $f$-tableaux
$$  \tableau{\a_1& \b_1&\c_1&\cdots&   \d_1&\e_1\\
                                        \a_2 } $$
such that $\a_2 \not \prec \d_1$ (respectively, $\a_2 \prec \d_1$). (If $T$ is of type $\langle 2$-$2 \rangle$, then both $\d_1$ and $\a_2$ are elements of $P_3$ and thus $\d_1 \not\prec \a_2$)

The image of $T$ of type $\langle 1$-$1 \rangle$ (respectively, of type $\langle 1$-$2 \rangle$ or of type $\langle 2$-$2 \rangle$) is contained in the set of $f$-tableaux
$$  \tableau{\a_1& \b_1&\c_1&\cdots&   \d_1&\e_1\\
                                        \a_2 } $$
such that $\a_2 \not \succ \e_1$ (respectively, $\a_2  \succ \e_1$).

The image of $T$ of type $\langle 1$-$2 \rangle$ (respectively,  of type $\langle 2$-$2 \rangle$) is contained in the set of $f$-tableaux
$$  \tableau{\a_1& \b_1&\c_1&\cdots&   \d_1&\e_1\\
                                        \a_2 } $$
such that $\a_1 \prec \e_1$ (respectively, $\a_1 \not \prec \e_1$).

The image of $T$ of type $\langle 2$-$1 \rangle$ (respectively,  of type $\langle 2$-$3 \rangle$) is contained in the set of $f$-tableaux
$$  \tableau{\a_1& \b_1&\c_1&\cdots&   \d_1&\e_1\\
                                        \a_2 } $$
such that $\a_2 \not \succ \b_1$ (respectively, $\a_2 \succ \b_1$).

Therefore, $\phi$ is an injective map from $\widetilde{\mathcal T}(2+k, 2,0)$ to $\widetilde{\mathcal T}(3+k, 1,0)$.
\end{proof}

\bigskip

\subsection{\bf Case IV} Assume $\mu_1=\mu_2+1$ and $\mu_2=\mu_3+1$. Then we have the diagram given in Figure~\ref{fig:CaseIV}, where $2 \mathcal{T}(\mu_1-1, \mu_2, \mu_3+1)^-$ means two copies of the set $\mathcal{T}(\mu_1-1, \mu_2, \mu_3+1)^-$.

\begin{figure}[ht]
\begin{center}
$$\begin{tikzcd}[row sep=huge]
 2 \mathcal{T}(\mu_1-1, \mu_2, \mu_3+1)^-\arrow[rd, hook] & \\
    &\mathcal{T}(\mu_1, \mu_2, \mu_3)^+
\end{tikzcd}
$$
 \end{center}
  \caption{\label{fig:CaseIV} Case IV. }
\end{figure}

\begin{proof}[Proof of Theorem~\ref{thm:main} in \textbf{Case IV}]

\quad Since $2(\mu_1-1, \mu_2, \mu_3+1)$ have negative sign and $(\mu_1, \mu_2, \mu_3)$ have positive sign in the diagram of $ \cup_{\alpha\in \mathcal{C}_\mu} \mathcal{K}_\alpha$, we need to show that $$2d(\mu_1-1, \mu_2, \mu_3+1)\leq d(\mu_1, \mu_2, \mu_3)\,,$$ where $d(\la)$ is the number of $f$-tableaux of shape $\la$.
We define two injective maps from $\mathcal{T}(\mu_1-1, \mu_2, \mu_3+1)$ to $\mathcal{T}(\mu_1, \mu_2, \mu_3)$ so that the image sets are disjoint.
Note that $\mu_1-1= \mu_2= \mu_3+1$. Hence if $T$ is an $f$-tableau in  $\mathcal{T}(\mu_1-1, \mu_2, \mu_3+1)$ then the last column of $T$ must have length $3$, say $\tableau{c_1\\c_2\\c_3}$ such that $c_1\prec c_2\prec c_3$.
\medskip

We let $\phi_1(T)$, $\phi_2(T)$ be the tableaux of shape $(\mu_1, \mu_2, \mu_3)$ such that the first $\mu_2-1$ columns are the same as the ones of $T$ and the last two columns are $\tableau{c_1&c_2\\c_3}$ and $\tableau{c_1&c_3\\c_2}$\,, respectively.
\medskip

Then it is easy to check that $\phi_1(T)$, $\phi_2(T)$ are $f$-tableaux in $\mathcal{T}(\mu_1, \mu_2, \mu_3)$ and $\phi_1$ and $\phi_2$ are injective with disjoint image sets.
\end{proof}

\bigskip


\section{Proofs of Lemmas and a Proposition in Section 4.1} \label{sec:proofs}

In this section, we prove Lemma \ref{lem:sigma21}, Lemma \ref{lem:sigma32}, Lemma \ref{lem:disjoint from T2}, Lemma  \ref{lem:disjoint from T1} and Proposition \ref{prop:phi2}, stated in Section~\ref{sec:h-positivity}.

\medskip
\begin{proof}[{\bf{Proof of Lemma \ref{lem:sigma21}}}]
Let

$$T=\scriptsize{\ttableau{a_1& b_1^{(m-3)} & \cdots &b_1^{(2)}& b_1^{(1)} &b_1& c_1& d_1^{(k-1)}&\cdots& d_1^{(2)}&d_1^{(1)}&d_1\\
                                            a_2& b_2^{(m-3)} &  \cdots &b_2^{(2)} & b_2^{(1)} & b_2& c_2}}$$
                                           be an $f$-tableau in $ \widetilde{\mathcal{T}}(m+k, m, 0)^- \,.$
                                           Here, $a_1=b_1^{(m-2)}$, $a_2=b_2^{(m-2)}$ and $c_1=d_1^{(k)}$.
Then $a_2 \not \prec d_1$ since $T\not\in \sigma_{3\rightarrow 1}^\square(\mathcal{T}(m+k-1, m, 1)^+)$.  \\

If $T$ is of type $\langle 1 \rangle$, then $\widetilde{\sigma}_{2\rightarrow 1}^\square(T)$ is  an $f$-tableau and, furthermore, is  an element of $\widetilde{\mathcal{T}}(m+k+1, m-1, 0)^+ $ because  $a_2 \not \prec d_1$.

If $T$ is of type $\langle 2$-$i\rangle$, then, since $d_1 \succ c_2 \succ c_1$ and $b_1 \not \succ c_1$, we have $d_1 \succ b_1$. Similarly, $d_1^{(j+1)} \succ b_1 ^{(j+1)}$ for all $j=-1, 0, ..., i-1$. From $d_1^{(i)} \in P_3$ and $c_1 \in P_1$, it follows that $\widetilde{\sigma}_{2\rightarrow 1}^\square(T)$ is an $f$-tableau. Since $c_2 \in P_2$, $\widetilde{\sigma}_{2\rightarrow 1}^\square(T)$ is an element of $\widetilde{\mathcal T}(m+k+1, m-1,0)^+$.

If $T$ is of type $\langle 2$-$\infty\rangle$, then, since $d_1 \not \succ a_2$ and $d_1^{(m-1)} \succ a_2$, we have $d_1^{(m-1)} > d_1$ and thus $b_1 \prec d_1^{(m-1)}$ and $c_2 \prec d_1^{(m-1)}$. Hence $\widetilde{\sigma}_{2\rightarrow 1}^\square(T)$ is an element of $\widetilde{\mathcal T}(m+k+1, m-1,0)^+$. \\

The image of $T$ of type $\langle  1\rangle$ (respectively, of type $\langle 2 \rangle$) is contained in the set of tableaux
$$
\scriptsize{\ttableau{\a_1& \b_1^{(m-3)} & \cdots &\b_1^{(2)}& \b_1^{(1)} &\b_1& \c_1& \d_1^{(k-1)}&\cdots& \d_1^{(2)}&\d_1^{(1)}&\d_1& \red{ \e_1}\\
                                            \a_2& \b_2^{(m-3)} &  \cdots &\b_2^{(2)} & \b_2^{(1)} & \red{\b_2}}}\, $$
such that $\b_2 \not \succ \e_1$ (respectively, $\b_2 \succ \e_1$).

If $T$ is of type $\langle 2$-$i\rangle$, i.e., there is $i\leq \min(k-1, m-2)$ such that $d_1^{(j+1)}\succ b_2^{(j)}$ for all $j=-1, 0, \dots, i-1$ but $d_1^{(i+1)}\nsucc b_2^{(i)}$,  then $\widetilde{\sigma}_{2\rightarrow 1}^\square(T)$ is contained in the set of tableaux
$$
\scriptsize{\ttableau{\a_1& \b_1^{(m-3)} & \cdots &\b_1^{(2)}& \b_1^{(1)} &\b_1& \c_1& \d_1^{(k-1)}&\cdots& \d_1^{(2)}&\d_1^{(1)}&\d_1& \e_1\\
                                            \a_2& \b_2^{(m-3)} &  \cdots &\b_2^{(2)} & \b_2^{(1)} & \b_2}}\,. $$
such that $\b_2^{(j+1)} \succ \d_1^{(j)}$ for all $j=0,..., i-1$ and $\b_2^{(i+1)} \not \succ \d_1^{(i)}$. Therefore, $\widetilde{\sigma}_{2\rightarrow 1}^\square$ restricted to the set of tableau $T$ of type $\langle 2$-$i \rangle$ is injective.

To show that the map $\widetilde{\sigma}_{2\rightarrow 1}^\square$ is injective, it suffices to show that the image of $\widetilde{\sigma}_{2\rightarrow 1}^\square$ restricted to the set of tableau of type $\langle 2$-$i \rangle$ with $i=(m-2)$ is disjoint of the image of $\widetilde{\sigma}_{2\rightarrow 1}^\square$ restricted to the set of tableau of type $\langle 2$-$\infty \rangle$.
The first is of the form

$$
\scriptsize{ \ttableau{\a_1&\cdots&\b_1^{(2)}&\b_1^{(1)}&\b_1& \c_1& \cdots &\d_1^{(m)}& \d_1^{(m-1)}  &\red{\b_2^{(m-2)}}&\red{ \cdots} &\red{\b_2^{(0)}}& \red{\c_2}\\
                                            \red{\d_1^{(m-2)}}&\red{\cdots} & \red{\d_1^{(2)}} &  \red{\d_1^{(1)}}&  \d_1 }}                                 \,  $$

                                            with $\d_1 \not \succ \b_2^{(m-2)} =\a_2$
and the second is of the form

$$
 \scriptsize{ \ttableau{\a_1&\cdots&\b_1^{(2)}&\b_1^{(1)}&\b_1& \c_1& \cdots &\d_1^{(m)}& \blue{\d_1}&\red{\b_2^{(m-2)}}&\red{ \cdots} &\red{\b_2^{(0)}}& \red{\c_2}\\
                                            \red{\d_1^{(m-2)}}&\red{\cdots} & \red{\d_1^{(2)}} &  \red{\d_1^{(1)}}& \blue{\d_1^{(m-1)}}}}                                 \,  $$

                                            with $\d_1 ^{(m-1)}  \succ \b_2^{(m-2)} =\a_2$.
   Consequently, $\widetilde{\sigma}_{2\rightarrow 1}^\square$ is injective.
\end{proof}

\medskip

\begin{proof}[{\bf{Proof of Lemma \ref{lem:sigma32}}}]
Let

 $$S=\scriptsize{ \ttableau{a_1& b_1^{(m-3)}& \cdots&b_1^{(2)}&b_1^{(1)}& b_1&c_1& d_1^{(k-1)}& \cdots&d_1^{(2)}& d_1^{(1)}&d_1& e_1\\
                                            a_2& b_2^{(m-3)}& \cdots&b_2^{(2)}& b_2^{(1)}\\
                                            a_3}}$$
be an $f$-tableau in $\widetilde{\mathcal{T}}(m+k+1, m-2, 1) \,.$  Here,  $a_1=b_1^{(m-2)} $ and $a_2=b_2^{(m-2)}  $ and $c_1=d_1^{(k)}$. Then, since $S\not\in \sigma_{3\rightarrow 1}^\square(\mathcal{T}(m+k-2, m, 2)^+)$ we have $b_2^{(m-3)} \not\prec e_1$. \\

\noindent {\bf Claim.} If $b_1 \not \prec a_3$, then $a_1 \prec b_1$  and $a_2 \not \succ c_1$. \\

\noindent {\it Proof of} {\bf  Claim.}
If $b_1 \not \prec a_3$, then, from $ a_2 \prec a_3$ and $b_1 \not \prec a_3$, it follows that $a_2 <b_1$ and thus  $a_1 \prec b_1$ (This is essentially the $(3+1)$-free condition). Since $a_2 <b_1$ and $b_1^{(1)} \not\succ b_1$, we have $a_2\not \succ c_1$. \qed \\

If $S$ is of type  (1), then $\widetilde{\sigma}_{3\rightarrow 2}^\square(S)$ is     an element of $\widetilde{\mathcal{T}}(m+k+1, m-1, 0)^+ $.

If  $S$ is of type  (2),   then by {\bf Claim.} together with the property $a_2 \in P_2$, $\widetilde{\sigma}_{3\rightarrow 2}^\square(S)$ is an element of $\widetilde{\mathcal{T}}(m+k+1, m-1, 0)^+ $.

If $S$ is of type (3), i.e., $b_1 \not \prec a_3$ and $b_1 \succ b_2^{(m-3)}$, then $b_1 \in P_3$ and thus $c_1 \in P_2 \cup P_3$. \\

Assume that $S$ is of type (3) and  $e_1 \not \prec a_3$ or $a_2 \prec d_1$. Then $a_1 \prec e_1$. To see this, use $a_1 \prec a_2 \prec a_3$ and $e_1 \not \prec a_3$ or $a_1 \prec a_2 \prec d_1$ and $e_1 \not \prec d_1$.  Thus, if $S$ is of type (3-1), then $\widetilde{\sigma}_{3\rightarrow 2}^\square(S)$ is an element of $\widetilde{\mathcal{T}}(m+k+1, m-1, 0)^+ $.\\

Now assume that  $S$ is of type (3) and $e_1 \prec a_3$. If $b_1^{(1)} \not \succ e_1$, then
$$         \scriptsize{ \ttableau{a_1& b_1^{(m-3)}& \cdots&b_1^{(2)}&b_1^{(1)}& \red{e_1}&c_1& d_1^{(k-1)}& \cdots&d_1^{(2)}& d_1^{(1)}&d_1& \red{b_1}\\
                                            a_2& b_2^{(m-3)}& \cdots&b_2^{(2)}& b_2^{(1)}& a_3
                                            }}\, $$
is an element of $\widetilde{\mathcal{T}}(m+k+1, m-1, 0)^+ $. Here, $e_1 \not \succ c_1$ follows from the property that $e_1 \in P_1 \cup P_2$ and  $c_1 \in P_2 \cup P_3$.

 If $b_1^{(1)} \succ e_1=d_1^{(-1)}$ and $b_1^{(2)}  \succ d_1=d_1^{(0)} $, then, from $b_2^{(1)} \succ b_1^{(1)} \succ e_1$ and $d_1 \not \succ e_1$, it follows that $b_1^{(1)} > d_1$ and thus $d_1 \prec b_2^{(1)}$ and $d_1^{(1)} \not \succ b_1^{(1)}$. Therefore,
$$         \scriptsize{ \ttableau{a_1& b_1^{(m-3)}& \cdots&b_1^{(2)}&\red{d_1} & \red{e_1}&c_1& d_1^{(k-1)}& \cdots&d_1^{(2)}& d_1^{(1)}&\red{b_1^{(1)}}& \red{b_1}\\
                                            a_2& b_2^{(m-3)}& \cdots&b_2^{(2)}& b_2^{(1)}& a_3
                                            }}\, $$
is an element of $\widetilde{\mathcal{T}}(m+k+1, m-1, 0)^+ $.
Similarly, if $b_1^{(1)} \succ e_1=d_1^{(-1)}, b_1^{(2)}  \succ d_1^{(0)}=d_1, ...,  b_1^{(i+1)}    \succ d_1^{(i-1)}, b_1^{(i+2)}  \not \succ d_1^{(i)}$, then
$$          \scriptsize{ \ttableau{a_1& \cdots & b_1^{(i+2)}&\red{d_1^{(i)}}&\red{\cdots}&\red{d_1^{(0)}}&\red{e_1}& c_1&\cdots &d_1^{(i+1)}&\red{b_1^{(i+1)}}&\cdots& \red{b_1^{(1)}}&\red{b_1}\\
                                           a_2&\cdots& b_2^{(i+2)}&b_2^{(i+1)} &\cdots& b_2^{(1)}& a_3 }}\, $$
is an element of $\widetilde{\mathcal{T}}(m+k+1, m-1, 0)^+ $. Thus, if $S$ is of type (3-2), then $\widetilde{\sigma}_{3\rightarrow 2}^\square(S)$ is an element of $\widetilde{\mathcal{T}}(m+k+1, m-1, 0)^+ $.\\

The image of $S$ of type (1) or of type (2) (respectively, of type (3)) is contained in the set of tableaux
$$
\scriptsize{\ttableau{\a_1& \b_1^{(m-3)} & \cdots &\b_1^{(2)}& \b_1^{(1)} &\b_1& \c_1& \d_1^{(k-1)}&\cdots& \d_1^{(2)}&\d_1^{(1)}&\d_1& \red{\e_1}\\
                                            \a_2& \red{\b_2^{(m-3)}} &  \cdots &\b_2^{(2)} & \b_2^{(1)} & \b_2}}\,. $$
such that $\b_2^{(m-3)} \not \prec \e_1$ (respectively, $\b_2^{(m-3)} \prec \e_1$).

The image of $S$ of type (1)  (respectively, of type (2)) is contained in the set of tableaux
$$
\scriptsize{\ttableau{\a_1& \b_1^{(m-3)} & \cdots &\b_1^{(2)}& \b_1^{(1)} &\b_1& \c_1& \d_1^{(k-1)}&\cdots& \d_1^{(2)}&\d_1^{(1)}&\d_1& \e_1\\
                                            \red{\a_2}& \b_2^{(m-3)} &  \cdots &\b_2^{(2)} & \b_2^{(1)} & \red{\b_2}}}\,. $$
such that $\a_2 \prec \b_2$ (respectively, $\a_2 \not \prec \b_2$).

The proof for the injectivity of $\widetilde{\sigma}_{3\rightarrow 2}^\square$ restricted to the set of $f$-tableaux $S$ of type (3) is similar to the previous cases and we omit the proof.
\end{proof}

\medskip

\begin{proof}[{\bf{Proof of Lemma ~\ref{lem:disjoint from T2}}}]
 Any  $f$-tableaux
$$  \scriptsize{\ttableau{\a_1& \b_1^{(m-3)} & \cdots &\b_1^{(2)}& \b_1^{(1)} &\b_1& \c_1& \d_1^{(k-1)}&\cdots& \d_1^{(2)}&\d_1^{(1)}&\d_1&\e_1\\
                                            \a_2& \b_2^{(m-3)} &  \cdots &\b_2^{(2)} & \b_2^{(1)} & \b_2}}$$
 in    $\widetilde{\mathcal T}^{+,2(i)}$ satisfies $\a_2 \not\prec \b_2$, $\c_1 \prec \e_1$, $\b_2 \succ \e_1$ and $\b_1^{(j)} \prec \d_1^{(j)} \prec \b_2^{(j+1)}$ for $j=0,...,i-1,$ where $\d_1^{(0)}:=\d_1$. Moreover, $\b_1^{(i)} \prec \d_1^{(i)} \not \prec \b_2^{(i+1)}$ and $\d_1^{(i+1)} \not\succ \b_2^{(i)}$.

 Any $f$-tableaux
$$  \scriptsize{\ttableau{\a_1& \b_1^{(m-3)} & \cdots &\b_1^{(2)}& \b_1^{(1)} &\b_1& \c_1& \d_1^{(k-1)}&\cdots& \d_1^{(2)}&\d_1^{(1)}&\d_1&\e_1\\
                                            \a_2& \b_2^{(m-3)} &  \cdots &\b_2^{(2)} & \b_2^{(1)} & \b_2}}$$
in $\widetilde{\mathcal T}^{+,2(\infty)}$ satisfies  $\a_2 \not\prec \b_2$, $\c_1 \prec \e_1$ and  $\b_1^{(j)} \prec \d_1^{(j)} \prec \b_2^{(j+1)}$ for $j=0,...,m-2,$ where $\d_1^{(0)}:=\d_1$, and $\b_1 \prec \d_1^{(m-1)}$. Furthermore, $\b_2 \succ \e_1$ by Remark \ref{rmk:T 2 infty}.

 Therefore, the set  $\widetilde{\mathcal T}^{+,2} $ is contained in the set of $f$-tableaux
$$  \scriptsize{\ttableau{\a_1& \b_1^{(m-3)} & \cdots &\b_1^{(2)}& \b_1^{(1)} &\b_1& \c_1& \d_1^{(k-1)}&\cdots& \d_1^{(2)}&\d_1^{(1)}&\d_1&\e_1\\
                                            \a_2& \b_2^{(m-3)} &  \cdots &\b_2^{(2)} & \b_2^{(1)} & \b_2}}$$
 such that $\a_2 \not\prec \b_2$ and $\c_1 \prec \e_1$ and $\b_2 \succ \e_1$.

The image $\widetilde{\sigma}_{3\rightarrow 2}^\square(S)$ of $S$ of type (1) and (3-2) (respectively, of type (3-1)) in Definition~\ref{def:sigma32} is an $f$-tableau
$$  \scriptsize{\ttableau{\a_1& \b_1^{(m-3)} & \cdots &\b_1^{(2)}& \b_1^{(1)} &\b_1& \c_1& \d_1^{(k-1)}&\cdots& \d_1^{(2)}&\d_1^{(1)}&\d_1&\e_1\\
                                            \a_2& \b_2^{(m-3)} &  \cdots &\b_2^{(2)} & \b_2^{(1)} & \b_2}}$$
 satisfying $\a_2 \prec \b_2$ (respectively, $\c_1 \not \prec \e_1$). Thus they are not contained in  $\widetilde{\mathcal T}^{+,2}$.

 Suppose that the image
  $$\widetilde{\sigma}_{3\rightarrow 2}^\square(S):=
         \scriptsize{ \ttableau{a_1& b_1^{(m-3)}& \cdots&b_1^{(2)}&b_1^{(1)}&\red{a_2}& c_1&d_1^{(k-1)}& \cdots& d_1^{(2)}& d_1^{(1)}&d_1& e_1\\
                                            \red{b_1}&b_2^{(m-3)}&\cdots&b_2^{(2)} & b_2^{(1)}& \blue{ a_3} }}$$
 of $S$ of type (2) in Definition~\ref{def:sigma32} is contained in  $\widetilde{\mathcal T}^{+,2}$. Then $c_1 \prec e_1 \prec a_3$ and $b_1 \not \prec a_3$ must hold, which implies that $c_1 \prec b_1$ by (3+1)-free condition in Lemma~\ref{lem:basic}. This is a contradiction since $b_1\nsucc c_1$ in $S$. Therefore, the image $\widetilde{\sigma}_{3\rightarrow 2}^\square(S)$ is not contained in $\widetilde{\mathcal T}^{+,2}$.
 \end{proof}

\medskip

\begin{proof}[{\bf{Proof of Lemma~\ref{lem:disjoint from T1}}}] The set  $\widetilde{\mathcal T}^{+,1}$ consists of $f$-tableaux
$$  \scriptsize{\ttableau{\a_1& \b_1^{(m-3)} & \cdots &\b_1^{(2)}& \b_1^{(1)} &\b_1& \c_1& \d_1^{(k-1)}&\cdots& \d_1^{(2)}&\d_1^{(1)}&\d_1&\e_1\\
                                            \a_2& \b_2^{(m-3)} &  \cdots &\b_2^{(2)} & \b_2^{(1)} & \b_2}}$$
such that $\c_1 \prec \e_1$, $\b_2 \not\succ \e_1$ and $\a_2 \not\prec \d_1$; where the last condition is from the fact that the $f$-tableaux in $\widetilde{\mathcal T}^{+,1}$ are $\widetilde{\sigma}_{2\rightarrow 1}^\square(T)$ for $T\not \in \widetilde{\mathcal T}(m+k-1, m, 1)^+$.
 The image $\widetilde{\sigma}_{3\rightarrow 2}^\square(S)$ of $S$ of type (3)  is an $f$-tableau
$$  \scriptsize{\ttableau{\a_1& \b_1^{(m-3)} & \cdots &\b_1^{(2)}& \b_1^{(1)} &\b_1& \c_1& \d_1^{(k-1)}&\cdots& \d_1^{(2)}&\d_1^{(1)}&\d_1&\e_1\\
                                            \a_2& \b_2^{(m-3)} &  \cdots &\b_2^{(2)} & \b_2^{(1)} & \b_2}}$$
 satisfying $\c_1 \not \prec \e_1$.
 \end{proof}

\medskip

\begin{proof}[{\bf Proof of Proposition \ref{prop:phi2}}]
Let
$$S=\scriptsize{ \ttableau{a_1& b_1^{(m-3)}& \cdots&b_1^{(2)}&b_1^{(1)}& b_1&c_1& d_1^{(k-1)}& \cdots&d_1^{(2)}& d_1^{(1)}&d_1& e_1\\
                                            a_2& b_2^{(m-3)}& \cdots&b_2^{(2)}& b_2^{(1)}\\
                                            a_3}}$$
be an $f$-tableau in $\widetilde{\mathcal{T}}(m+k+1, m-2, 1) \,.$
Then $b_2^{(m-3)} \not\prec e_1$ and here, $a_1=b_1^{(m-2)}  $ and $a_2=b_2^{(m-2)}$.

\begin{enumerate}
\item [(1)] When $b_1 \prec a_3$, we set
         $$R_0:=\widetilde{\sigma}_{3\rightarrow 2}^\square(S)=
         \scriptsize{ \ttableau{a_1& b_1^{(m-3)}& \cdots&b_1^{(2)}&b_1^{(1)}& b_1&c_1& d_1^{(k-1)}& \cdots&d_1^{(2)}& d_1^{(1)}&d_1& e_1\\
                                            a_2& b_2^{(m-3)}& \cdots&b_2^{(2)}& b_2^{(1)}&\red{a_3}
                                            }}\,.$$
Then by Lemma \ref{lem:disjoint from T2}, $R_0 \not \in\widetilde {\mathcal T}^{+,2}$.
\begin{enumerate}
\item [$\bullet$] If $R_0 \not \in \widetilde {\mathcal T}^{+,1}$, then we let $\phi_2(S)=R_0$.
\item [$\bullet$] If $R_0 \in \widetilde{\mathcal T}^{+,1}$, i.e.  $c_1 \prec e_1$, $a_3 \not\succ e_1$ and $a_2 \not\prec d_1$, then we set
$$R_1:=  \scriptsize{ \ttableau{a_1& b_1^{(m-3)}& \cdots&b_1^{(2)}&b_1^{(1)}& b_1&c_1& d_1^{(k-1)}& \cdots&d_1^{(2)}& d_1^{(1)}&d_1& \red{a_2}\\
                                            \red{e_1}& b_2^{(m-3)}& \cdots&b_2^{(2)}& b_2^{(1)}& a_3
                                            }}\,.$$
Then  $R_1 $ is an $f$-tableau  (because $a_3 \not\succ e_1$, $a_2 \not \prec d_1$ and $b_2^{(m-3)} \not \prec e_1$), and  is not contained in $ \widetilde{\mathcal T}^{+,1}$ (because $a_3 \succ a_2$), and is not of the form $R_0$ (because $e_1 \not \prec a_3$).

\begin{enumerate}
\item [--] If $R_1 \not\in \widetilde{\mathcal T}^{+,2}$,

then we let $\phi_2(S) =R_1$.




\item [--] If $R_1 \in \widetilde{\mathcal{T}}^{+,2 (i) }$ for some $0\leq i <m-2$,

 then we have ($c_1 \prec a_2$, $b_1 \prec d_1$, $b_1^{(j)} \prec d_1^{(j)}$ for $j=0,\dots, i$, and $d_1^{(j)} \prec b_2^{(j+1)}$ for $j=-1,\dots, i-1$, where $d_1^{(0)} =d_1$). Moreover, we have ($b_2^{(i+1)} \not \succ d_1^{(i)}$ and $d_1^{(i+1)} \not \succ b_2^{(i)}$),
and we let $\phi_2(S)=:R_2$ be, where $b_2^{(-1)}=a_3$ and $d_1^{(0)}=d_1$,
  $$\hspace{3cm} \scriptsize{\ttableau{a_1&\cdots&b_1^{(i+1)}&b_1^{(i)}&\cdots &b_1^{(1)}&b_1& c_1& \cdots&d_1^{(i+1)}& \red{b_2^{(i)}}&\red{\cdots}& \red{b_2^{(1)}}&\red{a_3}& e_1\\
                                             a_2&\cdots& b_2^{(i+1)}&\red{d_1^{(i)}}&\red{\cdots} &\red{d_1^{(1)}}  &  \red{d_1} }}. $$
Then $R_2$ is an $f$-tableau (because $e_1 \not \prec a_3$), and is contained neither in $\widetilde{\mathcal T}^{+,1}$ nor in $\widetilde{\mathcal T}^{+,2}$ (because $a_2 \prec a_3$ and $d_1 \not \succ e_1$), and is not of the form $R_0$, $R_1$ (because $a_2 \not \prec d_1$, $d_1 \not \succ e_1$).

\item [--] If $R_1 \in \widetilde{\mathcal{T}}^{+,2 (m-2) }$,

then we have ($c_1 \prec a_2$, $b_1 \prec d_1$, $b_1^{(j)} \prec d_1^{(j)}$ for $j=1,\dots, m-2$, and $d_1^{(j)} \prec b_2^{(j+1)}$ for $j=0,\dots, m-4$, where $d_1^{(0)} =d_1$). Moreover, we have ($d_1^{(m-3)} \prec e_1$, $a_3 \not\succ d_1^{(m-2)}$ and $d_1^{(m-1)} \not \succ e_1$),
and we let $\phi_2(S)=:R_2$ be
   $$ \hspace{3cm} \scriptsize{\ttableau{a_1&b_1^{(m-3)}&\cdots  &b_1^{(1)}&b_1& c_1& \cdots&d_1^{(m)}& d_1^{(m-1)}   & \blue{e_1} &\red{b_2^{(m-3)}}&\red{\cdots}& \red{b_2^{(1)}}&\red{a_3}& \blue{d_1^{(m-2)}}\\
                                             a_2 & \red{d_1^{(m-3)}}& \red{ \cdots}  &\red{d_1^{(1)}}  &  d_1  }}. $$
Then $R_2$ is an $f$-tableau (because both $a_2$ and $d_1^{(m-3)}$ belong to $P_2$), and is contained neither in $\widetilde{\mathcal T}^{+,1}$ nor in $\widetilde{\mathcal T}^{+,2}$ (because $a_2 \prec a_3$ and $d_1 \not \succ d_1^{(m-2)}$ for $d_1\in P_2$ and $d_1^{(m-2)} \in P_2\cup P_3$), and is not of the form $R_0$, $R_1$ (because $e_1 \not \prec d_1$, $d_1 \not \succ d_1^{(m-2)}$).

\item [--] If $R_1 \in \widetilde{\mathcal{T}}^{+,2(\infty)}$,

 then we have ($c_1 \prec a_2$, $b_1 \prec d_1$, $b_1^{(j)} \prec d_1^{(j)}$ for $j=1,\dots, m-2$, and  $d_1^{(j)} \prec b_2^{(j+1)}$ for $j=0,\dots, m-4$, where $d_1^{(0)} =d_1$).  Moreover, we have ($d_1^{(m-3)} \prec e_1$, $b_1\prec d_1^{(m-1)}$, $a_3 \succ d_1^{(m-2)}$, $a_3 \not\prec d_1^{(m)}$ and $d_1^{(m-1)} \succ a_2$),

and we let $\phi_2(S)=:R_2$ be
  $$\hspace{3cm} \scriptsize{\ttableau{a_1&b_1^{(m-3)}&\cdots  &b_1^{(1)}&b_1& c_1& \cdots&d_1^{(m)}& \blue{a_3} & \blue{e_1} &\red{b_2^{(m-3)}}&\red{\cdots}& \red{b_2^{(1)}}&\blue{d_1^{(m-1)}}& \blue{d_1^{(m-2)}}\\
                                             a_2 & \red{d_1^{(m-3)}}&\red{\cdots}   &\red{d_1^{(1)}}  &  d_1   } }. $$

Then $R_2$ is an $f$-tableau (because  both $a_3$ and $d_1^{(m-1)}$ are elements of $P_3$), and is contained neither in $\widetilde{\mathcal T}^{+,1}$ nor in $\widetilde{\mathcal T}^{+,2}$ (because $a_2 \prec d_1^{(m-1)}$, and $d_1 \not \succ d_1^{(m-2)}$ for $d_1, d_1^{(m-2)} \in P_2$), and is not of the form $R_0$, $R_1$ (because $a_2 \not \prec d_1 $ and $d_1 \not \succ d_1^{(m-2)}$ for both $d_1$ and $d_1^{(m-2)}$ are contained in $P_2$).
\end{enumerate}

\medskip
Furthermore, $R_2$'s are all different.
For,
if $R_1 \in \widetilde{\mathcal T}^{+, 2(i)}$ for some $0 \leq i \leq m-3$, then $R_2$ is contained in the set of $f$-tableaux
$$\hspace{2cm} \scriptsize{\ttableau{\a_1&\cdots&\b_1^{(i+1)}&\b_1^{(i)}&\cdots &\b_1^{(1)}&\b_1& \c_1& \cdots&\d_1^{(i+1)}&  \d_1^{(i)} & \cdots & \d_1^{(1)} &\d_1& \e_1\\
                                             \a_2&\cdots& \b_2^{(i+1)}&\b_2^{(i)} & \cdots  & \b_2^{(1)}   &  \a_2 }}  $$
such that $\b_1^{(j)} \prec \d_1^{(j)}$ for all $j=0,...,i$, where $\d_1^{(0)}=\d_1$, and $\b_2^{(j)} \prec \d_1^{(j+1)}$ for $j=1,\dots ,i-1$, and $\b_2^{(i)}\not \prec \d_1^{(i+1)}$.

On the other hand, if $R_1 \in \widetilde{\mathcal T}^{+, 2(m-2)}$ (respectively, $\widetilde{\mathcal T}^{+, 2(\infty)}$), then $R_2$  is contained in the set of $f$-tableaux
$$\hspace{2cm} \scriptsize{\ttableau{\a_1&\cdots&\b_1^{(i+1)}&\b_1^{(i)}&\cdots &\b_1^{(1)}&\b_1& \c_1& \cdots&\d_1^{(i+1)}&  \d_1^{(i)} & \cdots & \d_1^{(1)} &\d_1& \e_1\\
                                             \a_2&\cdots& \b_2^{(i+1)}&\b_2^{(i)} & \cdots  & \b_2^{(1)}   &  \a_2 }}  $$
such that $\b_1^{(j)} \prec \d_1^{(j)}$ for all $j=0,\dots , m-2$, where $\d_1^{(0)}=d_1$, and $\b_2^{(j)} \prec \d_1^{(j+1)}$ for $j=1,\dots , m-4$ and $\b_2^{(m-3)}  \prec \d_1^{(m-2)}$, and $\d_1^{(m-1)} \not \succ \e_1$ (respectively, $\d_1^{(m-1)}  \succ \e_1$).

\end{enumerate}

\medskip \medskip
\item [(2)] When $b_1 \not\prec a_3$ and $b_1 \not \succ b_2^{(m-3)}$, we set
          $$Q_0:=\widetilde{\sigma}_{3\rightarrow 2}^\square(S)=
         \scriptsize{ \ttableau{a_1& b_1^{(m-3)}& \cdots&b_1^{(2)}&b_1^{(1)}&\red{a_2}& c_1&d_1^{(k-1)}& \cdots& d_1^{(2)}& d_1^{(1)}&d_1& e_1\\
                                            \red{b_1}&b_2^{(m-3)}&\cdots&b_2^{(2)} & b_2^{(1)}&  a_3 }}\,.$$
Then by Lemma \ref{lem:disjoint from T2}, $Q_0 \not \in\widetilde T^{+,2}$, and is not of the form $R_1$, $R_2$  (because $b_1 \not \succ c_1$).
\begin{enumerate}
    \item[$\bullet$] If $Q_0 \not \in\widetilde{\mathcal T}^{+,1}$, then we let $\phi_2(S)=Q_0$.

    \item[$\bullet$] If $Q_0 \in   \widetilde{\mathcal T}^{+,1}$, then we let
  $$\phi_2(S)=  \scriptsize{ \ttableau{a_1& b_1^{(m-3)}& \cdots&b_1^{(2)}&b_1^{(1)}&{a_2}& c_1&d_1^{(k-1)}& \cdots& d_1^{(2)}& d_1^{(1)}&d_1& \red{b_1}\\
                                            \red{e_1}&b_2^{(m-3)}&\cdots&b_2^{(2)} & b_2^{(1)}&  a_3 }}=:Q_1.$$

  Then $Q_1$ is an $f$-tableau (because $a_3 \not\succ e_1$, $b_2^{(m-3)} \not \prec e_1$ and $d_1 \not \succ b_1$), that is contained neither in $\widetilde{\mathcal T}^{+,1}$ nor in $\widetilde{\mathcal T}^{+,2}$ (because $c_1 \not\prec b_1$), and is not of the form $R_0$, $R_1$, $R_2$  or $Q_0$ (because $e_1 \not\prec a_3$, $a_3 \not\succ b_1$, $c_1 \not\prec b_1$, and $e_1 \succ c_1$).

\end{enumerate}
\item [(3)] When $b_1 \not\prec a_3$ and $b_1   \succ b_2^{(m-3)}$, then  we let $\phi_2(S)=\widetilde{\sigma}_{3\rightarrow 2}^\square(S)$.

Then by Lemma \ref{lem:disjoint from T2} and Lemma \ref{lem:disjoint from T1}, $\phi_2(S)$ is contained neither  in $\widetilde{\mathcal T}^{+,1}$ nor in  $\widetilde{\mathcal T}^{+,2}$, and is not of the form $R_1$, $R_2$  or $Q_1$ (because  $\phi_2(S)$ is an $f$-tableau
$$
\scriptsize{\ttableau{a_1& b_1^{(m-3)} & \cdots &b_1^{(2)}& b_1^{(1)} &b_1& c_1& d_1^{(k-1)}&\cdots& d_1^{(2)}&d_1^{(1)}&d_1& e_1\\
                                            a_2& b_2^{(m-3)} &  \cdots &b_2^{(2)} & b_2^{(1)} & b_2}}\,. $$
satisfying $e_1 \in P_3$, $c_1 \not\prec e_1$, and  $b_2^{(m-3)} \prec e_1$).
\end{enumerate}

 We proved that $\phi_2(S)$ is not an image of $\phi_1$ for any $S$, by considering all cases that appear in the definition of $\phi_2$, Definition~\ref{def:phi2}. This completes the proof of Proposition~\ref{prop:phi2}.
\end{proof}

\bigskip


\section{Concluding Remarks}\label{sec:concluding_remarks}

We proved the Stanley-Stembridge conjecture for the natural unit interval orders corresponding to the Hessenberg functions with bounce number $3$ in the current paper. There are a few concluding remarks.

\begin{enumerate}
\item   We can give a simple proof of the Stanley-Stembridge conjecture when the bounce number $b(f)$ of a given Hessenberg function $f$ is $2$: For any partition $\mu=(\mu_1, \mu_2)$ with two parts, we have $\mathcal C_\mu=\{\mu\}$ and $\mathcal{K}_\mu=\{((\mu_1, \mu_2)^+, (\mu_1-1, \mu_2+1)^- \}$. Moreover, $\sigma_{2\rightarrow 1}^\square$ is an injection from $\mathcal{T}(\mu_1-1, \mu_2+1)^-$ to  $\mathcal{T}(\mu_1, \mu_2)^+$, as one can see in Figure~\ref{fig:bounce2}.
\begin{figure}[ht]
\begin{center}
$ \mathcal{K}_\mu=
\begin{tikzcd}[row sep=huge, column sep=tiny]
(\mu_1-1, \mu_2+1)^-\arrow[d,"\sigma_{2,1}^1"] \\
(\mu_1, \mu_2)^+
\end{tikzcd}
$\quad\qquad\quad\quad\qquad\quad
$\begin{tikzcd}[row sep=huge, column sep=tiny]
\mathcal{T}(\mu_1-1, \mu_2+1)^-\arrow[d, hook, "\sigma_{2\rightarrow 1}^\square"] \\
\mathcal{T}(\mu_1, \mu_2)^+
\end{tikzcd}
$
\end{center}
\caption{\label{fig:bounce2} Diagrams for the case when bounce number is $2$. }
\end{figure}

\item Our work done in Section~\ref{sec:bounce3} to write the coefficients in the $h$-expansion of the chromatic symmetric functions as a signed sum of the number of dual $P$-tableaux can be extended to the general cases with arbitrary bounce number.  We, however, were not able to extend the work to construct sign reversing involutions in Section~\ref{sec:h-positivity} to general cases.
\item The injections defined for the proof of the $h$-positivity are not weight(ascent) preserving. Hence our proof does not give a proof of the \emph{refined} Stanley-Stembridge conjecture: Conjecture~\ref{conj:h-positive}. We think that it would be the case that weight preserving injections could be defined in a more natural way than the ones we defined for non-refined cases.

\end{enumerate}


\providecommand{\bysame}{\leavevmode\hbox to3em{\hrulefill}\thinspace}
\providecommand{\MR}{\relax\ifhmode\unskip\space\fi MR }
\providecommand{\MRhref}[2]{%
  \href{http://www.ams.org/mathscinet-getitem?mr=#1}{#2}
}
\providecommand{\href}[2]{#2}

\end{document}